\newcommand{\qq}{\mathfrak{q}}
\newcommand{\mm}{\mathfrak{m}}
\newcommand{\OO}{\mathcal{O}}
\newcommand{\RR}{\mathbb{R}}
\newcommand{\QQ}{\mathbb{Q}}
\newcommand{\ZZ}{\mathbb{Z}}
\newcommand{\FF}{\mathbb{F}}
\newcommand{\pp}{\mathfrak{p}}
\newcommand{\PP}{\mathbb{P}}
\newcommand{\II}{\mathbb{I}}
\DeclareMathOperator{\red}{red}
\DeclareMathOperator{\Nrd}{Nrd}
\DeclareMathOperator{\Gal}{Gal}
\DeclareMathOperator{\Cl}{Cl}
\DeclareMathOperator{\ch}{char}
\DeclareMathOperator{\coker}{coker}
\newtheorem{thm}{Theorem}[section]
\newtheorem{prop}[thm]{Proposition}
\newtheorem{lem}[thm]{Lemma}
\newtheorem{cor}[thm]{Corollary}
\theoremstyle{definition}
\newtheorem{defn}[thm]{Definition}
\newtheorem{rmk}[thm]{Remark}
\title{Universally and existentially definable subsets of global fields}
\author{Kirsten Eisentr\"{a}ger}
  \address{Department of Mathematics \\ The Pennsylvania State
  University \\ University, Park, PA 16802, USA}
  \email{eisentra@math.psu.edu}
  \urladdr{http://www.personal.psu.edu/kxe8/}
\author{Travis Morrison}
  \address{Department of Mathematics \\ The Pennsylvania State
  University \\ University, Park, PA 16802, USA}
  \email{txm950@psu.edu}
  \urladdr{http://www.personal.psu.edu/txm950/}
\begin{document}

\begin{abstract}
  We show that rings of $S$-integers of a global function field $K$ of odd
  characteristic are first-order universally definable in $K$. This extends
  work of Koenigsmann and Park who showed the same for $\ZZ$ in $\QQ$ and the ring of integers in a
  number field, respectively.
  We also give another proof of a theorem of Poonen and show that the
  set of non-squares in
  a global field of characteristic $\not=2$ is diophantine. Finally, we show that the set of pairs
  $(x,y)\in K^{\times}\times K^{\times}$ such that $x$ is not a norm
  in $K(\sqrt{y})$ is diophantine over $K$ for any global field $K$ of characteristic $\not=2$.
\end{abstract}

\maketitle

\section{Introduction}

Hilbert's Tenth Problem asks whether there exists an algorithm that
decides, given an arbitrary polynomial equation with integer
coefficients, whether it has a solution in the integers. Matiyasevich
answered this in the negative in \cite{Mat70} using work by Davis,
Putnam, and J.\ Robinson \cite{DPR61}. We say that Hilbert's Tenth
Problem is undecidable. The same question can be asked for polynomial
equations with coefficients and solutions in other commutative rings
$R$. We refer to this as Hilbert's Tenth Problem over $R$. Hilbert's
Tenth Problem over $\QQ$, and over number fields in general, is still
open. The function field analogue is much better understood, and
Hilbert's Tenth Problem is known to be undecidable for global function
fields (\cite{Ph91}, \cite{Vi94}, \cite{Sh96}, \cite{Eis03}).

One approach to proving that Hilbert's Tenth Problem for $\QQ$ is
undecidable is to show that $\ZZ$ is \emph{diophantine} over $\QQ$:
\begin{defn}
  Let $R$ be a ring. We say that $A\subseteq R^m$ is diophantine over
  $R$ if there exists a polynomial
  $g(x_1,\ldots,x_m,y_1,\ldots,y_n)\in
  R[x_1,\ldots,x_m,y_1,\ldots,y_n]$ such that
\[
(a_1,\ldots,a_m)\in A \iff \exists r_1,\ldots,r_n \in R \text{ s.t. } g(a_1,\ldots,a_m,r_1,\ldots,r_n)=0.
\]
\end{defn}

If one had a diophantine, i.e.\ a positive existential definition of $\mathbb{Z}$ in $\mathbb{Q}$,
then a reduction argument, together with Matiyasevich's theorem for
Hilbert's Tenth Problem over $\mathbb{Z}$, would imply that Hilbert's
Tenth Problem for $\mathbb{Q}$ is undecidable.
But it is still open whether $\mathbb{Z}$ is positive existentially definable
in $\mathbb{Q}$. In fact, if Mazur's conjecture holds, then $\ZZ$ is
not existentially definable in $\QQ$ \cite{Maz92}.

It is, however, possible to define the integers inside the rationals
with a first-order formula. This is due to J.\ Robinson \cite{Rob49}
who gave a $\forall \exists \forall$ definition of $\mathbb{Z}$ in
$\mathbb{Q}$. Her result was improved by Poonen  \cite{Poo09} who gave a
$\forall \exists$ definition of $\mathbb{Z}$ in $\mathbb{Q}$.
 Koenigsmann \cite{Koe13} further improved on Poonen's result and
gave a definition of the integers inside $\mathbb{Q}$ that uses only
universal quantifiers.

 Park generalized this and
showed that for any number field $K$, the ring of integers $\OO_K$ is
universally definable in $K$ \cite{Park}.

Similar definability questions can be asked for subrings of global
function fields.  Let $q$ be a power of a prime. While Hilbert's
Tenth Problem for both $\FF_q[t]$ and $\FF_q(t)$ is undecidable
(\cite{Den79}, \cite{Ph91}, \cite{Vi94}), it is not known whether
$\FF_{q}[t]$ is diophantine over $\FF_{q}(t)$. Showing this still
seems out of reach, but it is possible to give a universal definition
of $\FF_q[t]$ in $\FF_q(t)$ which we do in this paper for odd $q$. More generally,
we prove the natural generalization of Park's result for defining rings of integers to global function
fields $K$.

This is the first of three main theorems in this paper, and we prove
it in Section~\ref{univdef}:
\begin{thm}\label{univdefthm}
  Let $K$ be a global function field of odd characteristic and let $S$
  be a finite, nonempty set of primes of $K$. Then $\OO_S$ is first-order
  universally definable in $K$. Equivalently, $K\setminus \OO_S$ is
  diophantine over $K$.
\end{thm}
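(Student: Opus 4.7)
The plan is to follow the Koenigsmann--Park strategy, adapted to global function fields, with the third main theorem of this paper as the crucial input. Since $\OO_S$ being universally definable is equivalent to $K \setminus \OO_S$ being diophantine, the goal is to produce a single existential formula over $K$ which defines the set of elements having negative valuation at some prime $\mathfrak{p} \notin S$. The key tool is the (forthcoming) theorem that, for $K$ of characteristic $\neq 2$, the set
\[
\{(x,y) \in K^{\times} \times K^{\times} : x \text{ is not a norm from } K(\sqrt{y})\}
\]
is diophantine in $K$. This turns local norm conditions at primes of $K$ into purely existential sentences, bypassing the need to existentially quantify over primes directly.

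The core local-global idea is as follows. For a judiciously chosen auxiliary pair $(a,y) \in K^{\times}\times K^{\times}$, the quaternion algebra $(a+tb,y)_K$ (for suitable $b$) fails to split precisely at the places where $a+tb$ is not a local norm from $K(\sqrt{y})$, and by Hasse's norm theorem for the quadratic extension $K(\sqrt{y})/K$ this is detected globally. I would use this to construct, for each ``test parameter'' $y \in K^{\times}$, a diophantine set $N_y \subseteq K$ characterized by a norm condition that, when $y$ is chosen so that $K(\sqrt{y})/K$ is ramified or inert at a prescribed set of primes, forces $t \in N_y$ to have negative valuation at some prime outside $S$. Varying $y$ existentially as a parameter in a single formula then produces a diophantine description of the union $\bigcup_{\mathfrak{p} \notin S}\{t : v_{\mathfrak{p}}(t) < 0\} = K \setminus \OO_S$. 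The hypothesis that $S$ is nonempty is indispensable: in every such construction there is an unavoidable ``overflow'' prime where local behavior of $t$ cannot be controlled, and one absorbs this overflow into a prime of $S$.

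The main obstacle I expect is the second step: extracting tight valuation information from the non-norm condition uniformly in $\mathfrak{p}$. Over $\QQ$, Koenigsmann leverages the special role of $2$ and quadratic reciprocity; over number fields Park uses the Hasse norm theorem combined with careful local computations at all places including the archimedean ones. A global function field of odd characteristic has no archimedean place and no distinguished prime, so the combinatorial bookkeeping with quadratic symbols must be redone with a uniformizer-based parametrization of primes, while also separately handling the constant field $\FF_q$ (whose elements are units at every prime and therefore produce degenerate Hilbert symbols). I expect the proof to require a careful choice of auxiliary elements of $K$, designed so that over the parameter $y$ one sweeps out every non-$S$ prime exactly once, and so that the ``overflow'' place always lies in $S$; once such a family is found, combining it with the diophantine non-norm predicate and standard existential manipulations should conclude.
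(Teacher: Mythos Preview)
Your proposal identifies the right family of ideas (Koenigsmann--Park, quaternion algebras, local--global via Hilbert symbols) but misplaces the logical dependencies and leaves the actual construction unspecified. The global non-norm theorem is \emph{not} the input to the universal-definability theorem; in the paper both results are proved from the same underlying machinery (diophantine semilocal rings parametrized by $K^\times$), and in fact the proof of the non-norm theorem invokes that machinery rather than the other way around. Using the non-norm predicate as a black box would not obviously yield integrality information: the condition $(x,y)_{\mathfrak p}=-1$ constrains $x$ and $y$ modulo squares at $\mathfrak p$, not the sign of $v_{\mathfrak p}(t)$, and you have not explained how to pass from one to the other uniformly in $\mathfrak p$.

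What the paper actually does is the following. Fix $a,b\in K^\times$ so that $L=K(\sqrt a,\sqrt b)$ is biquadratic over $K$, and use the Artin map $\psi_{L/K}$ to partition the primes $\mathfrak p\nmid\mathfrak m$ into four classes $\PP^\sigma$ indexed by $\sigma\in\Gal(L/K)$. For each $\sigma\ne(1,1)$ and each $p$ in a diophantine parameter set $\Phi_\sigma\subseteq K^\times$, form the semilocal ring $R_p^\sigma=\bigcap_{\mathfrak p\in\PP^\sigma(p)}\OO_{\mathfrak p}$, where $\PP^\sigma(p)$ is a finite nonempty set of primes read off from the ramification of explicit quaternion algebras; a parallel construction $R_{p,q}^{(1,1)}$ with $(p,q)$ ranging over a diophantine set $\Psi_K$ handles $\sigma=(1,1)$. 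The Jacobson radicals $J(R_p^\sigma)$ are shown to be diophantine, so each $\widetilde{R_p^\sigma}=\{x:\nexists\, y\in J(R_p^\sigma),\ xy=1\}$ is universally definable, and one proves
\[
\OO_S=\bigcap_{\mathfrak p\in S(\mathfrak m)\setminus S}\OO_{\mathfrak p}\ \cap\ \bigcap_{\sigma\ne(1,1)}\bigcap_{p\in\Phi_\sigma}\widetilde{R_p^\sigma}\ \cap\ \bigcap_{(p,q)\in\Psi_K}\widetilde{R_{p,q}^{(1,1)}},
\]
which unwinds to a single universal formula.

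The genuine technical obstacle, which your sketch does not address, is proving that \emph{every} prime $\mathfrak p_0\notin S$ arises as $\PP^\sigma(p)$ for some $p\in\Phi_\sigma$, or as $\Delta_{ap,q}\cap\Delta_{bp,q}$ for some $(p,q)\in\Psi_K$. In the number-field case Park uses the Hilbert class field; over a function field that extension is infinite, so the paper substitutes a finite abelian extension with Galois group $\Cl(\OO_{S'})$ for a carefully chosen auxiliary $S'$, and proves a Dirichlet-type theorem that every class in the ray class group $\Cl_{\mathfrak m}(A)$ of a ring of $S'$-integers contains infinitely many primes. This class-field-theoretic input is the heart of the function-field adaptation, and nothing in your proposal supplies a replacement for it. Your ``overflow prime absorbed into $S$'' explanation is also off the mark: the role of $S$ in the argument is that its primes are forced into the admissible modulus $\mathfrak m$ (via the choice of $a$), so they never appear in any $\PP^\sigma(p)$ or $\Delta_{ap,q}\cap\Delta_{bp,q}$, and hence the right-hand side imposes no integrality condition at them.
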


Here, for a finite set $S$ of primes of $K$ we denote by $\OO_S$ the ring
\[
\OO_S:=\{x\in K: v_{\pp}(x)\geq 0 \quad \forall \text{ primes }\pp \not\in S\}.
\]

Our theorem generalizes a result by Rumely \cite{Rum80} who gave a
first-order definition of a polynomial ring inside a global function
field $K$. It also improves one of the results in \cite{Shl14}, where it was
shown how to define the $S$-integers in a global function
field using a first-order formula that involves one change in quantifiers.

One of the main ideas in proving Theorem~\ref{univdefthm} is to use certain
diophantine rings, parame\-trized by $K^{\times}$, to encode integrality
at finite sets of primes; this is based on ideas of Poonen in
\cite{Poo09} and Koenigsmann in \cite{Koe13}. Park replaces the
congruence classes that Koenigsmann used for
$\ZZ$ in $\QQ$ with ray classes of a fixed modulus of $K$ for a fixed
biquadratic extension of $K$. 

Some parts of Park's arguments do not extend to the function field
setting and require a different approach; in \cite{Park}, a
biquadratic extension of $K$ is chosen so that it is linearly disjoint
from the Hilbert class field of $K$.  Since the Hilbert class field of
a global function field is an infinite extension, we cannot use it in
our arguments, but there is another natural finite extension of the
global function field $K$ that we can use instead. We use a finite
extension of $K$ whose Galois group is the ideal class group of the
Dedekind domain $\OO_{S'}$ for some carefully chosen set of primes
$S'$.  Another crucial ingredient in the proof of
Theorem~\ref{univdefthm} is showing that any class in the ray class
group of a Dedekind domain $A$ contained in $K$ contains infinitely
many primes of $A$.

We also show that other arithmetic subsets of a global field $K$ are
diophantine over $K$, extending the results in
\cite{Koe13}. 
Given $y\in K^{\times}\setminus K^{\times2}$, consider the
norm map
 \begin{align*}
 N_y: K(\sqrt{y})&\to K \\ 
 a+b\sqrt{y} &\mapsto a^2-yb^2.
 \end{align*}
In Section~\ref{section:non-norms} we show the following new result:
\begin{thm}\label{nonnorm}
Let $K$ be a global field with $\ch(K)\not=2$. Then
\[
\{(x,y)\in K^{\times}\times K^{\times} | x\not\in N_y(K(\sqrt{y}))\}
\]
is diophantine over  $K$. 
\end{thm}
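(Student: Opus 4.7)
My plan is to deduce Theorem~\ref{nonnorm} from the Hasse norm theorem combined with the two Diophantine ingredients established earlier in this section and in Section~\ref{univdef}: the Diophantine definition of the non-squares in $K$ (the new proof of Poonen's theorem) and the Diophantine description of $K \setminus \OO_S$ for finite nonempty $S$ (Theorem~\ref{univdefthm}). Since $K(\sqrt{y})/K$ is cyclic of degree dividing $2$, the Hasse norm theorem characterizes $x \in N_y(K(\sqrt{y})^\times)$ as the condition that the Hilbert symbol $(x,y)_v$ equals $1$ at every place $v$ of $K$. Consequently, $x$ is a non-norm iff $y$ is a non-square in $K$ and there exists a place $v$ with $(x,y)_v = -1$. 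The first conjunct is Diophantine by the non-squares result, so the problem reduces to Diophantinely expressing the existence of a place where the Hilbert symbol is nontrivial.

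The key local computation is that at a ``good'' prime $\pp$ --- one of odd residue characteristic, with $v_\pp(y) = 0$, unramified in $K(\sqrt{y})/K$ --- one has $(x,y)_\pp = -1$ iff $v_\pp(x)$ is odd and $y$ reduces to a non-square modulo $\pp$. I would use Theorem~\ref{univdefthm} to Diophantinely quantify over primes $\pp$ outside a chosen finite set and to encode the parity of $v_\pp(x)$; simultaneously, I would use the Diophantine non-squares formula, applied to a suitable reduction of $y$ modulo the uniformizer of $\pp$, to encode non-squareness of $y$ modulo $\pp$. Combining these into a single existential formula yields a Diophantine condition equivalent to ``$(x,y)_\pp = -1$ for some good prime $\pp$''.

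The remaining step is to verify that every non-norm pair $(x,y)$ really does admit such a good-prime witness, and this is where I expect the main obstacle. A priori the Hilbert obstruction to $x$ being a norm could live entirely at the finitely many bad primes --- those ramified in $K(\sqrt{y})/K$ or, in the number-field case, dyadic. One must either handle bad primes by an explicit Diophantine local encoding (polynomial conditions that test the local Hilbert symbol at each type of bad prime) or argue that $(x,y)$ can always be modified Diophantinely, by an auxiliary existentially-quantified substitution that absorbs the bad-prime behavior, so that the obstruction manifests at a good prime. The product formula $\prod_v (x,y)_v = 1$ forces the set of nontrivial-Hilbert places to have even size, which provides the algebraic flexibility for such a reduction; verifying that the reduction can be carried out uniformly and Diophantinely in $(x,y)$ is the most delicate part of the argument.
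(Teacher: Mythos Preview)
Your proposal has a genuine gap in how the two black-box theorems are invoked. Theorem~\ref{univdefthm} asserts that $K\setminus\OO_S$ is diophantine for a \emph{fixed} finite $S$; it does not give you a first-order handle on an individual prime $\pp$ that you can then feed into another formula. A prime of $K$ is not an element of $K$, and nothing in the statement of Theorem~\ref{univdefthm} produces a witness ``uniformizer at $\pp$'' that simultaneously records which $\pp$ you are at. Likewise, Theorem~\ref{nonsq} tests whether an element is a non-square \emph{in $K$}; there is no ``reduction of $y$ modulo the uniformizer of $\pp$'' living in $K$ to which you could apply it, so the phrase ``non-squareness of $y$ modulo $\pp$'' has no direct diophantine encoding via that theorem. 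The crux of the problem is precisely to tie the parity of $v_\pp(x)$ and the residue-field behaviour of $y$ to the \emph{same} variable prime, and neither black box supplies that link.

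The paper does not use Theorems~\ref{univdefthm} and~\ref{nonsq} as inputs at all. It goes back to the machinery that proved them: the diophantine parameter sets $\Phi_\sigma$ and $\Psi_K$ and the semilocal rings $R_p^\sigma$, $R_{p,q}^{(1,1)}$ of Definition~\ref{semilocalringsdef}. An element $p\in\Phi_\sigma$ singles out a finite set $\PP^\sigma(p)$ of primes, and the diophantine sets $p\cdot K^{\times2}\cdot(R_p^\sigma)^\times$ and $s_\sigma\cdot K^{\times2}\cdot(1+J(R_p^\sigma))$ encode, respectively, ``$v_\pp(x)$ odd'' and ``$y$ a non-square mod $\pp$'' \emph{simultaneously at those primes}. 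Lemma~\ref{hsodd} then turns this into $(x,y)_\pp=-1$. The finitely many primes dividing the modulus $\mm$ (which includes all dyadic and real places) are handled one at a time by the fixed-prime result Theorem~\ref{nonlocnorm}. Your worry about the obstruction living only at bad primes is real---for instance $(x,y)=(-1,-1)$ over $\QQ$---and is resolved exactly by this last step, not by shifting the obstruction to a good prime.
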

This generalizes a result of \cite{Koe13} from $K=\QQ$ to global
fields. 

We
also give a new proof of the following theorem:
\begin{thm}\label{nonsq}
Let $K$ be a global field with $\ch(K)\not=2$. Then $K^{\times}\setminus K^{\times2}$ is diophantine over  $K$. 
\end{thm}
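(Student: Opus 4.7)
The plan is to derive Theorem~\ref{nonsq} directly from Theorem~\ref{nonnorm} via the equivalence
\[
x\in K^{\times}\setminus K^{\times 2}\iff x\ne 0\text{ and }\exists\,y\in K^{\times}\text{ with }x\notin N_y(K(\sqrt{y})).
\]
Granting this equivalence, $K^{\times}\setminus K^{\times 2}$ is the projection of the diophantine set of Theorem~\ref{nonnorm} onto its first coordinate (intersected with the diophantine condition $x\ne 0$), and since projections and intersections of diophantine sets are diophantine, the conclusion follows.

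The ``$\Leftarrow$'' direction of the equivalence is immediate: if $x=z^2$ with $z\in K^{\times}$, then $z\in K\subset K(\sqrt{y})$ and $N_y(z)=z^2=x$ for every $y\in K^{\times}$, so no such $y$ exists. For the ``$\Rightarrow$'' direction, given a non-square $x\in K^{\times}$, I would construct $y$ using class field theory. Since $K(\sqrt{x})/K$ is a nontrivial quadratic extension, Chebotarev density provides infinitely many finite places $v_0$ of $K$ of residue characteristic $\ne 2$ at which $x$ remains a non-square in $K_{v_0}$. Fix such a $v_0$. The local Hilbert pairing on $K_{v_0}^{\times}/K_{v_0}^{\times 2}$ is non-degenerate, so there exists $y^{(0)}\in K_{v_0}^{\times}$ with $(x,y^{(0)})_{v_0}=-1$. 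By weak approximation one can find $y\in K^{\times}$ whose class in $K_{v_0}^{\times}/K_{v_0}^{\times 2}$ agrees with that of $y^{(0)}$, so $(x,y)_{v_0}=-1$. The Hasse norm theorem, which applies since $K(\sqrt{y})/K$ is cyclic, then yields $x\notin N_y(K(\sqrt{y}))$.

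The main obstacle is the ``$\Rightarrow$'' step, which requires invoking several tools from the class field theory of global fields (Chebotarev density, the Hasse norm theorem, non-degeneracy of the local Hilbert symbol at a place of odd residue characteristic, and weak approximation). None of these is deep individually, and all are available for both number fields and global function fields of characteristic $\ne 2$, so the argument should occupy only a few lines once Theorem~\ref{nonnorm} is in hand.
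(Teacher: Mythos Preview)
Your argument is correct and there is no circularity: the paper's proof of Theorem~\ref{nonnorm} relies on the Section~\ref{univdef} machinery together with Lemma~\ref{hsodd} and Theorem~\ref{nonlocnorm}, none of which invoke Theorem~\ref{nonsq}. The equivalence you state is valid (for the ``$\Rightarrow$'' direction one only needs the easy implication ``global norm $\Rightarrow$ local norm everywhere'', not the full Hasse norm theorem), and the projection of a diophantine set is diophantine. So Theorem~\ref{nonsq} does follow from Theorem~\ref{nonnorm} in a few lines.

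This is, however, a genuinely different route from the paper's. The paper proves Theorem~\ref{nonsq} \emph{directly} and \emph{before} Theorem~\ref{nonnorm}, via Lemma~\ref{noreal}: a non-square $x$ is detected either by failure of total positivity, or by odd valuation at a prime dividing $\mm_0$, or by the existence of $p\in\Phi_{(-1,1)}$ with $x\in a\cdot K^{\times 2}\cdot(1+J(R_p^{(-1,1)}))$. This uses only the $(-1,1)$-sector of the Section~\ref{univdef} machinery (the sets $\Phi_{(-1,1)}$ and $R_p^{(-1,1)}$) and avoids the heavier $(1,1)$-case apparatus ($\Psi_K$, Lemma~\ref{1,1}, Lemma~\ref{Aprime}) that Theorem~\ref{nonnorm} requires. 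Your approach is shorter \emph{given} Theorem~\ref{nonnorm}, but it makes Theorem~\ref{nonsq} depend on the hardest result in the paper; the paper's approach keeps Theorem~\ref{nonsq} logically lighter and independent, which is why the authors can present it first.
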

This was established by Poonen in \cite{Poo09b}, using results on the
Brauer-Manin obstruction. For number fields, this was extended to
non-$n$th powers in \cite{CTG15} and further in
\cite{Dit16}.  In \cite{Koe13}, Koenigsmann gave a more
elementary proof for $K=\QQ$.
 Using
results in \cite{Park} and their extensions in this paper, together
with Artin Reciprocity and the Chebotarev Density Theorem, we
give a different proof of Poonen's result.

\section{$S$-integers and class field theory of global function fields}\label{ffcft}

Let $K$ be a global function field. In this section we recall some
facts about rings of $S$-integers and their class groups. 

\subsection{Background and definitions}
Let $K$ be a global function field, and let $S_K$ denote the set of
all primes of $K$. By a prime of $K$ we mean an equivalence class of
nontrivial absolute values of $K$. In a global function field, all
such absolute values are non-archimedean and we can represent a prime
as a pair $(\pp,\OO_{\pp})$ where $\OO_{\pp}$ is a local ring of $K$
and $\pp\subseteq\OO_{\pp}$ is its maximal ideal. We will often refer
only to the ideal $\pp$ as a prime of $K$.  We denote by $v_{\pp}$ 
the associated normalized discrete valuation on $K$. We now recall some
facts about the ring of $S$-integers $\OO_S$ in $K$ where $S\subset
S_K$ is a finite set of primes of $K$. The ring $\OO_S$ is a Dedekind
domain and its prime ideals are in one-to-one correspondence with the
primes of $K$ not in $S$; this correspondence is given by
\[
\pp\mapsto \pp\cap\OO_S.
\]
See \cite[Theorem 14.5]{Rosen02}. So given $q\in \OO_S$, we can factor
$q\OO_S$ uniquely into a product of prime ideals of $\OO_S$. The
support of the divisor of $q$ contains the primes in this
factorization and some primes of $S$.

A modulus $\mm$ of $K$ is a formal product of primes of $K$,
$\mm= \prod \pp^{\mm(\pp)}$, such that $\mm(\pp)\geq0$ for all $\pp$
and $\mm(\pp)=0$ for all but finitely many $\pp$. The group of
fractional ideals $I$ of $K$ is the free abelian group generated by
the primes of $K$. We define $I_{\mm}$ to be the subgroup generated by
the primes which do not appear in $\mm$. Given $\alpha\in K^{\times}$,
it defines a fractional ideal
\[
(\alpha):=\prod_{\pp}\pp^{v_{\pp}(\alpha)}.
\]

Define
\[
K_{\mm,1}=\{x\in K^{\times}: v_{\pp}(x-1)\geq \mm(\pp)
\text{ for all }\pp \text{ dividing } \mm\}.
\]
We have a well-defined map $i: K_{\mm,1}\to I_{\mm}$, sending $\alpha$
to  the fractional ideal $(\alpha)$. Define $P_{\mm}$ to be the subgroup $i(K_{\mm,1})$ of
$I_{\mm}$. The ray class
group modulo $\mm$, $C_{\mm}$, is defined to be the quotient $I_{\mm}/P_{\mm}$. This
group is not finite, but the degree map (which we can define by
viewing these formal products as divisors) induces an exact sequence
\[
0\to C^0_{\mm}\to C_{\mm}\to \ZZ\to 0.
\]
 The subgroup $C^0_{\mm}$ of degree-zero divisor classes can be shown to be finite. 

Let $L$ be a finite abelian extension of $K$. Suppose $\pp\in S_K$ is 
unramified in $L$. Then we define $(\pp,L/K)\in \Gal(L/K)$ to be its
associated Frobenius automorphism. If $\mm$ is a modulus of $K$
divisible by those primes which ramify in $L$, we have the global
Artin map
\begin{align*}
\psi_{L/K}: I_{\mm} &\to \Gal(L/K) \\
\prod \pp_i^{e_i} &\mapsto \prod (\pp_i,L/K)^{e_i}.
\end{align*}

\begin{thm}\label{Arep}(Artin Reciprocity)
  The Artin map is surjective and there exists a modulus $\mm$
  containing all the primes of $K$ which ramify in $L$ such that the kernel is
  $P_{\mm}N_{L/K}(I_L(\mm'))$; here $N_{L/K}$ is the norm map on
  fractional ideals and $\mm'$ is the modulus of $L$
   consisting of primes of $L$ lying above those of $K$ contained in $\mm$.
\end{thm}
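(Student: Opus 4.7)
The statement is a version of global Artin Reciprocity for the function field $K$, which is a foundational theorem of class field theory rather than a result genuinely being proved in this paper; my proposal would be to cite a standard reference (e.g.\ Rosen's book, already cited earlier in the excerpt, or Neukirch--Schmidt--Wingberg) and then verify that the idelic statement translates to the ideal-theoretic form given. Still, here is the shape a self-contained argument would take.

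First I would reformulate the statement adelically. Let $\mathbb{A}_K^{\times}$ be the idele group and $C_K = \mathbb{A}_K^{\times}/K^{\times}$ the idele class group. For each place $\pp$ one has a local Artin map $\psi_{\pp}\colon K_{\pp}^{\times}\to \Gal(L_{\mathfrak{P}}/K_{\pp})\hookrightarrow \Gal(L/K)$ (where $\mathfrak{P}\mid \pp$), which on an unramified place sends a uniformizer to the local Frobenius. Taking the product gives a global map $\psi_{L/K}\colon \mathbb{A}_K^{\times}\to \Gal(L/K)$; the content of global reciprocity is that this map is trivial on $K^{\times}$, hence descends to $C_K\to \Gal(L/K)$. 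I would then show that the induced map is surjective and has kernel equal to $N_{L/K}(C_L)$, so that one obtains an isomorphism
\[
C_K/N_{L/K}(C_L)\xrightarrow{\sim} \Gal(L/K).
\]

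The bulk of the work is establishing the vanishing on principal ideles together with the norm-index equality. The standard route is cohomological: build the fundamental class in $H^2(\Gal(L/K),C_L)$ from local fundamental classes via a local-global compatibility, then invoke Tate--Nakayama to identify $\hat H^{-2}(\Gal(L/K),\ZZ)=\Gal(L/K)^{\mathrm{ab}}$ with $\hat H^0(\Gal(L/K),C_L)=C_K/N_{L/K}C_L$. Along the way one needs the first inequality $[C_K:N_{L/K}C_L]\geq [L:K]$ (an analytic input, provable in the function-field case by counting using the zeta function of the curve, or by Chebotarev) and the second inequality $\leq [L:K]$ (an algebraic/cohomological input). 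The genuinely hard step here—and the one that historically required Artin's insight—is verifying that the global map kills $K^{\times}$; for function fields one can alternatively exploit the geometry of the curve underlying $K$, or reduce to cyclotomic function field extensions in the sense of Carlitz--Hayes, where the reciprocity law is explicit.

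Finally I would translate the idelic statement into the form in the theorem. Given $L/K$, pick a modulus $\mm$ divisible by every ramified prime and large enough that the subgroup $U_{\mm}:=\prod_{\pp\mid\mm}U_{\pp}^{(\mm(\pp))}\times\prod_{\pp\nmid\mm}\OO_{\pp}^{\times}$ of $\mathbb{A}_K^{\times}$ lies in the kernel of $\psi_{L/K}$; such an $\mm$ exists because each local Artin map has open kernel and is unramified away from the ramified places. The quotient $\mathbb{A}_K^{\times}/(K^{\times}U_{\mm})$ is then canonically identified with the ray class group $C_{\mm}=I_{\mm}/P_{\mm}$, under which the idelic Artin map becomes the map on ideals in the statement, and the image of $N_{L/K}(\mathbb{A}_L^{\times})$ becomes $P_{\mm}N_{L/K}(I_L(\mm'))$. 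Pushing through this identification yields exactly the claim. The main obstacle, as noted, is the global reciprocity law itself, which one is really just invoking.
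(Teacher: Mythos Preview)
Your proposal is correct and matches the paper's treatment: the paper states Artin Reciprocity as a known input from class field theory and gives no proof at all, so your suggestion to cite a standard reference (e.g.\ Rosen or Tate's article, the latter already cited in the paper as \cite{Ta67}) is exactly the right move. Your sketch of the idelic-to-ideal-theoretic translation is more than the paper provides and would be fine to include or omit.
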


\begin{rmk}We call any $\mm$ as in Theorem \ref{Arep} an
  {\em admissible} modulus for the extension $L$ over $K$.
\end{rmk}

We will need the existence theorem of class field theory for function
fields, so we introduce the idele group. We define the idele group of
$K$ to be the restricted product of $K_{\pp}^{\times}$ for
$\pp\in S_K$ with respect to the compact groups $R_{\pp}^{\times}$,
where $R_{\pp}$ is the ring of integers in the completion $K_{\pp}$ of
$K$ at $\pp$. We denote the idele group of $K$ by $\II$.  Denote the
diagonal embedding of $K^{\times}$ in $\II$ again by $K^{\times}$ and
the idele class group by $C_K:=\II/K^{\times}$, which we endow with
the quotient topology.

An idele of $K$ determines a fractional ideal of $K$ via the surjective map
\begin{align*}
\text{id}: \II &\to I \\
(x_{\pp})&\mapsto \prod_{\pp\in S_K} \pp^{v_{\pp}(x_{\pp})}.
\end{align*}

Again let $L/K$ be a finite abelian extension and let $S_{ram}$ be the set of primes of $K$ ramifying in $L$. Define
\[
\II_{S_{ram}}:=\{(x_{\pp})\in \II: x_{\pp}=1 \text{ for all } \pp \in S_{ram}\}.
\]

There is a unique function $\phi_{L/K}: \II \to \Gal(L/K)$ which is
continuous, trivial on $K^{\times}$, and satisfies
$\phi_{L/K}((x_{\pp}))=\psi_{L/K}(\text{id}((x_{\pp})))$ for all
$(x_{\pp})\in \II_{S_{ram}}$; see  \cite{Ta67}. Hence $\phi_{L/K}$ induces a map on the idele class
group $C_K$, which we again denote by $\phi_{L/K}$. This is the idelic
Artin map. The idelic Artin reciprocity law states that the kernel of
$\phi_{L/K}$ is $N(C_L)$ where $N$ is the norm map on ideles.

Now we state the existence theorem of class field theory; see Theorem 1 of Chapter 8 in \cite{ATcft}. 

\begin{thm}[Existence Theorem]\label{existthm}	
  Let $K$ be any global field. Fix an algebraic closure $\overline{K}$
  of $K$. Given a finite index open subgroup $H\subset C_K$, there is
  a unique finite abelian extension $L$ of $K$ in $\overline{K}$ such
  that $H=N(C_L)$.
\end{thm}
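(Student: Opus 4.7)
The plan is to deploy the standard machinery of global class field theory, of which this existence theorem is a core pillar. Uniqueness will be a quick consequence of the reciprocity law already recalled above, while existence carries the substantive content.

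For uniqueness, suppose $L_1,L_2 \subset \overline{K}$ are two finite abelian extensions of $K$ with $N(C_{L_1}) = N(C_{L_2}) = H$. Let $L = L_1L_2$, which is again finite abelian over $K$. A direct check gives $N(C_L) = N(C_{L_1}) \cap N(C_{L_2}) = H$, and idelic Artin reciprocity applied to $L$ yields $[L:K] = [C_K : N(C_L)] = [C_K : H]$. The same reciprocity applied to each $L_i$ gives $[L_i:K] = [C_K:H]$, forcing $L = L_1 = L_2$.

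For existence, I would follow the classical route. First, verify that the collection of norm subgroups of $C_K$ is closed under finite intersection and that any subgroup of $C_K$ containing a norm subgroup is itself a norm subgroup (the latter because, via Artin reciprocity, such a supergroup corresponds to an intermediate subfield). Together these reduce the problem to showing that every open finite-index $H \subset C_K$ contains some norm subgroup. Decomposing $C_K/H$ into primary components reduces further to the case where $[C_K : H] = \ell^n$ for a single prime $\ell$. When $\ell \neq \ch(K)$, I would pass to the cyclotomic extension $K' = K(\zeta_{\ell^n})$ and consider the pullback $H' = N_{K'/K}^{-1}(H) \subset C_{K'}$, which is open, of finite index, and contains $C_{K'}^{\ell^n}$. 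Kummer theory, translated through the idelic description of $C_{K'}$, realizes every such $H'$ as a norm subgroup of some finite abelian extension $L'/K'$, and a Galois-descent argument using the $\Gal(K'/K)$-stability of $H'$ produces the desired extension $L/K$. The case $\ell = \ch(K)$ (possible in the function field setting) is handled analogously with Artin--Schreier--Witt theory in place of Kummer theory.

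The main obstacle lurks inside the previous paragraph: one must verify that every finite abelian $L/K$ satisfies $[C_K : N_{L/K}(C_L)] = [L:K]$. This is the content of the so-called first and second inequalities of global class field theory --- one proved cohomologically (or via analytic $L$-function methods) and the other by a Herbrand-quotient computation on local and global units --- and, together with the descent step for the Kummer and Artin--Schreier--Witt extensions, it constitutes the genuine mathematical content of the existence theorem. Everything else flows formally from the reciprocity law.
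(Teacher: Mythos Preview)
Your sketch is a faithful outline of the classical proof of the existence theorem as found in Artin--Tate, and the mathematics you describe is correct. However, the paper does not supply its own proof of this statement: it is quoted as a foundational input from global class field theory, with the reference ``see Theorem~1 of Chapter~8 in \cite{ATcft}'' given immediately before the theorem. So there is no ``paper's proof'' to compare against---your write-up is essentially a compressed version of the argument in the cited reference, and in that sense it aligns with what the paper invokes rather than differing from it.
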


\subsection{Ray class groups of rings of $S$-integers}

Let $S_{\infty}:=\{\infty_1,\ldots,\infty_n\} \subseteq S_K$, and
$A:=\OO_{S_{\infty}}$. The ideal class group $\Cl(A)$ of the Dedekind
domain $A$ is finite. Denote by $K^A$ the maximal abelian unramified
extension of $K$ in which each prime of $S_{\infty}$ splits
completely. Then $\Cl(A)\cong \Gal(K^A/K)$ via the Artin map (see
\cite{Rosen87}), and we will use $K^A$ and $\Cl(A)$ in Section \ref{integrality}. Set $\infty:=\infty_1\cdots\infty_n$. For the rest of this section, let $\mm$ be another modulus of $K$ coprime to $\infty$; we can then view $\mm$ as an ideal of $A$. Below we define $\Cl_{\mm}(A)$, the ray class group of $A$ for the modulus $\mm$. One crucial ingredient to the universal definition of $S$-integers is Lemma \ref{Aprime}, where we need that a given class of $\Cl_{\mm}(A)$ contains infinitely many primes of $A$. This is Theorem ~\ref{chebA} at the end of this section, whose proof uses idelic class field theory. The remainder of this section is devoted to proving Theorem \ref{chebA}. One can think of this as a function field analogue of Dirichlet's theorem.

\begin{defn}
  Let $I_{\mm}(A)$ be the group of fractional ideals of $A$ which are
  coprime to $\mm$. We have a natural injection
  $j: K_{\mm,1}\to I_{\mm}(A)$ whose image we denote by
  $P_{\mm}(A)$. This image consists of principal fractional ideals
  $\alpha A$ with $v_{\pp}(\alpha-1)\geq \mm(\pp)$.  Define
  \[
  \Cl_{\mm}(A):=I_{\mm}(A)/P_{\mm}(A).
  \]
\end{defn}

We need the following lemma:

\begin{lem}[Kernel-Cokernel sequence]
Given a pair of maps between abelian groups 
\[
A \xrightarrow{f} B \xrightarrow{g} C
\]
there is an exact sequence 
\[
0 \to \ker f \to \ker g\circ f \to \ker g \to \coker f \to \coker g\circ f \to \coker g \to 0.
\]
\end{lem}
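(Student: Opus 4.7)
The plan is to construct the six maps of the sequence explicitly and then verify exactness at each of the six positions. One could alternatively obtain this as a corollary of the snake lemma applied to a suitable $3\times 3$ diagram, but since every map in the claimed sequence has an obvious candidate, the direct route is cleaner.

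First I would define the maps. The inclusion $\ker f\hookrightarrow \ker(g\circ f)$ is legitimate because $f(a)=0$ forces $g(f(a))=0$. The next arrow $\ker(g\circ f)\to\ker g$ is the restriction of $f$, since $a\in\ker(g\circ f)$ implies $f(a)\in\ker g$. The connecting map $\delta\colon\ker g\to\coker f$ sends $b$ to its coset $b+\mathrm{im}\,f$ in $B/\mathrm{im}\,f$. The map $\bar g\colon\coker f\to\coker(g\circ f)$ is induced by $g$, sending $b+\mathrm{im}\,f$ to $g(b)+\mathrm{im}(g\circ f)$, and is well-defined because $g(\mathrm{im}\,f)\subseteq\mathrm{im}(g\circ f)$. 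Finally, $\coker(g\circ f)\to\coker g$ is the natural projection, well-defined since $\mathrm{im}(g\circ f)\subseteq\mathrm{im}\,g$.

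Then I would check exactness at each of the six nodes. Injectivity of the first map and surjectivity of the last are immediate. Exactness at $\ker(g\circ f)$ is the identity $\ker f=\ker(f|_{\ker(g\circ f)})$, and exactness at $\coker(g\circ f)$ reduces to $\mathrm{im}\,\bar g=(\mathrm{im}\,g)/\mathrm{im}(g\circ f)=\ker(\mathrm{proj})$. The two substantive verifications occur at $\ker g$ and at $\coker f$. At $\ker g$, both the image of the restricted $f$ and the kernel of $\delta$ equal $\mathrm{im}\,f\cap\ker g$, so exactness here is just a matter of tracing the definitions. At $\coker f$, if $[b]\in\ker\bar g$ then $g(b)=g(f(a))$ for some $a\in A$, so $b-f(a)\in\ker g$ and $[b]=\delta(b-f(a))$; the reverse containment is automatic, since $\delta$ only outputs classes of elements of $\ker g$, which $\bar g$ visibly kills.

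The only delicate point is packaging $\delta$ and $\bar g$ so that they are well-defined and mesh correctly; once this is done, everything reduces to elementary diagram-chasing, and the result is entirely formal, holding in any abelian category.
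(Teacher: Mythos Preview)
Your proof is correct. The paper does not actually prove this lemma at all; it simply cites Proposition~0.24 of Milne's \emph{Arithmetic Duality Theorems}. Your direct construction of the six maps and node-by-node verification of exactness is the standard elementary argument and is strictly more self-contained than what the paper offers.
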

\begin{proof}
This is
Proposition 0.24 in \cite{milne2006}.
\end{proof}

Define 

\begin{align*}
U_{\mm,\pp}&:=\{x\in K_{\pp}^{\times}: v_{\pp}(x-1)\geq \mm(\pp)\},\\ 
\II_{\mm}&:= \prod_{\pp\nmid \mm} K_{\pp}^{\times} \times \prod_{\pp|\mm} U_{\mm,\pp} \cap \II,\\ 
W_{\mm}&:= \prod_{\pp|\infty} K_{\pp}^{\times} \times \prod_{\pp\nmid m\cdot\infty} \OO_{\pp}^{\times} \times \prod_{\pp|\mm} U_{\mm,\pp}.
\end{align*}

\begin{prop}\label{raysasideles}
Let $\mm$ be a modulus of $K$ coprime to $\infty$. 
\begin{enumerate}
\item The ideal map $\text{id}: \II_\mm \to I_{\mm}(A)$ induces an isomorphism $\II_{\mm}/(K_{\mm,1}W_{\mm})\cong \Cl_{\mm}(A)$.
\item The inclusion map $\II_{\mm}\to \II$ induces an isomorphism $\II_{\mm}/K_{\mm,1}\cong C_K$.
\end{enumerate}
\end{prop}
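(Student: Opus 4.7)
The plan is to prove both isomorphisms by a kernel-computation argument, treating (1) by restricting the ideal map to $\II_\mm$ and tracking what the subgroups $K_{\mm,1}$ and $W_\mm$ map to, and (2) by using (weak) approximation to lift elements of $C_K$ into $\II_\mm$ modulo $K^\times$.

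For (1), I would first observe that the original ideal map restricts to a well-defined homomorphism $\text{id}\colon \II_\mm \to I_\mm(A)$, because any $x=(x_\pp)\in \II_\mm$ satisfies $v_\pp(x_\pp)=0$ at primes dividing $\mm$ (since $x_\pp \in U_{\mm,\pp}$ forces $v_\pp(x_\pp-1)\ge \mm(\pp)\ge 1$), and the primes in $S_\infty$ are simply not among the primes of $A$. Surjectivity is immediate: any $\prod \pp^{n_\pp} \in I_\mm(A)$ is hit by the idele with $x_\pp=\pi_\pp^{n_\pp}$ at the finitely many relevant primes and $x_\pp=1$ elsewhere. The kernel consists of those $(x_\pp)$ with $v_\pp(x_\pp)=0$ for every $\pp\notin S_\infty$, which by inspection is exactly $W_\mm$. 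Finally, $K_{\mm,1}$ sits inside $\II_\mm$ by diagonal embedding, and by definition $\text{id}(K_{\mm,1})=P_\mm(A)$. It then suffices to show $\text{id}^{-1}(P_\mm(A)) = K_{\mm,1}W_\mm$: if $\text{id}(x)=\alpha A$ with $\alpha\in K_{\mm,1}$, then $\text{id}(\alpha^{-1}x)$ is the trivial ideal of $A$, so $\alpha^{-1}x\in W_\mm$ and $x\in K_{\mm,1}W_\mm$. The reverse containment is clear. Passing to quotients gives the isomorphism $\II_\mm/(K_{\mm,1}W_\mm)\cong \Cl_\mm(A)$.

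For (2), the composite $\II_\mm \hookrightarrow \II \twoheadrightarrow C_K$ has kernel $\II_\mm\cap K^\times$, which is precisely $K_{\mm,1}$: an element $\alpha \in K^\times$ lies in $\II_\mm$ if and only if its diagonal image lies in $U_{\mm,\pp}$ for every $\pp\mid \mm$, i.e.\ $v_\pp(\alpha-1)\ge \mm(\pp)$ for all $\pp\mid \mm$. For surjectivity, given any idele $(x_\pp)\in \II$, weak (or rather strong) approximation produces $\alpha\in K^\times$ with $\alpha x_\pp \equiv 1 \pmod{\pp^{\mm(\pp)}}$ for each of the finitely many primes $\pp\mid \mm$; then $\alpha(x_\pp)\in \II_\mm$ represents the same class as $(x_\pp)$ in $C_K$. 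This shows $K^\times \cdot \II_\mm = \II$, hence $\II_\mm \to C_K$ is surjective, giving the isomorphism $\II_\mm/K_{\mm,1}\cong C_K$.

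The only subtle point, and the main thing to keep straight, is the bookkeeping between fractional ideals of $K$ and those of $A$: the ideal map on $\II_\mm$ lands in $I_\mm(A)$ essentially because primes of $S_\infty$ become units in $A$ and hence are discarded, and the kernel computation works cleanly only once one uses this convention consistently. Both quotients ultimately rely on standard approximation statements, so no deep function-field input is needed beyond what is already recorded in the excerpt.
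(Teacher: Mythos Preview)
Your proof is correct and essentially matches the paper's argument: for (2) both you and the paper compute the kernel as $\II_\mm\cap K^\times=K_{\mm,1}$ and obtain surjectivity by weak approximation (the paper approximates $x_\pp$ by $b\in K^\times$ and divides, which is your argument with $\alpha=b^{-1}$). For (1) the paper packages the same kernel/image computation through the kernel--cokernel exact sequence applied to $K_{\mm,1}\to\II_\mm\to I_\mm(A)$, but the content---$\ker(\text{id})=W_\mm$, $\text{id}(K_{\mm,1})=P_\mm(A)$, hence $\text{id}^{-1}(P_\mm(A))=K_{\mm,1}W_\mm$---is exactly what you write out directly.
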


\begin{proof}
(1) We have maps
\[
K_{\mm,1}\to \II_{\mm}\to I_{\mm}(A),
\]
where the first map is the diagonal embedding and the second is the ideal map. Applying the kernel-cokernel sequence gives us the exact sequence
\[
W_{\mm} \to \II_{\mm}/K_{\mm,1}\to \Cl_{\mm}(A)\to 0.
\]
This follows because by definition, $\Cl_{\mm}(A)=I_{\mm}(A)/P_{\mm}(A)$, $P_{\mm}(A)
=j(K_{\mm,1})$, and the kernel of the ideal map restricted to
$\II_{\mm}$ is $W_{\mm}$. Thus $\Cl_{\mm}(A)$ is isomorphic to
$\II_{\mm}/K_{\mm,1}$ modulo the image of $W_{\mm}$. Since the
first map in the above sequence is reduction modulo the image of
$K_{\mm,1}$ in $W_{\mm}$, we get that
\[
\Cl_{\mm}(A)\cong (\II_{\mm}/K_{\mm,1})/(W_{\mm}K_{\mm,1}/K_{\mm,1})\cong \II_{\mm}/K_{\mm,1}W_{\mm}.
\]

(2) The injection $\II_{\mm}\to \II$ gives us an injection
$\II_{\mm}/K_{\mm,1}\to C_K$. Denote the maximal ideal of $R_{\pp}$ by $\hat{\pp}$. Let $(x_{\pp})\in \II$ and choose, using
weak approximation, $b\in K^{\times}$ such that $v_{\pp}(x_{\pp}-b)>\mm(\pp) + v_{\pp}(x_{\pp})$ for each $\pp|\mm$. Because $v_{\pp}(x_{\pp}-b)>v_{\pp}(x_{\pp})$, we must have that $v_{\pp}(x_{\pp})=v_{\pp}(b)$. Then 
\[
v_{\pp}(x_{\pp}/b-1)=v_{\pp}(x_{\pp}-b)-v_{\pp}(b)>\mm(\pp),
\]
 implying that $x_{\pp}/b\in U_{\mm,\pp}$ for each
$\pp|\mm$. Then $(x_{\pp}/b)\in \II_{\mm}$ maps to the image of
$(x_{\pp})$ in $C_K$ and we see that the map is surjective.
\end{proof}

\begin{prop}\label{Clmafinite}
The group $\Cl_{\mm}(A)$ is finite.
\end{prop}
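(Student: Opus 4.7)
The plan is to combine parts (1) and (2) of Proposition~\ref{raysasideles} to rewrite $\Cl_\mm(A)$ as a quotient of the idele class group $C_K$, and then show that the denominator has finite index in $C_K$.

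Together, parts (1) and (2) give $\Cl_\mm(A)\cong C_K/\overline{W_\mm}$, where $\overline{W_\mm}$ denotes the image of $W_\mm$ under the canonical map $\II\to C_K$. So the task reduces to showing that $\overline{W_\mm}$ has finite index in $C_K$. First I would observe that $W_\mm$ is open in $\II$: each of its local factors --- the full groups $K_\pp^\times$ at $\pp\mid\infty$, the unit groups $\OO_\pp^\times$ at primes not dividing $\mm\infty$, and the neighborhoods $U_{\mm,\pp}$ at $\pp\mid\mm$ --- is open in $K_\pp^\times$. Consequently, $\overline{W_\mm}$ is an open subgroup of $C_K$. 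Second, I would exploit the exact sequence
\[
0 \to C_K^0 \to C_K \xrightarrow{\deg} \ZZ \to 0.
\]
The subgroup $C_K^0$ of degree-zero classes is compact (a classical fact from the product formula plus discreteness of $K^\times$ in $\II$), so any open subgroup of $C_K^0$ has finite index; in particular $\overline{W_\mm}\cap C_K^0$ does. Meanwhile the image of $\overline{W_\mm}$ under $\deg$ contains $\deg(\infty_i)\,\ZZ$ for each $\infty_i\in S_\infty$, because $W_\mm$ contains the entire local group $K_{\infty_i}^\times$. Hence $\deg(\overline{W_\mm})$ is a subgroup of finite index in $\ZZ$. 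Combining the two, $\overline{W_\mm}$ has finite index in $C_K$, and $\Cl_\mm(A)$ is finite.

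The step I expect to be the main obstacle is the invocation of the compactness of $C_K^0$: this is standard but rests on nontrivial input, so the cleanest route is to cite it from a reference such as Weil's \emph{Basic Number Theory} or \cite{Rosen02}. If instead one prefers to avoid idelic machinery here, the same conclusion follows from the classical exact sequence
\[
A^\times \to (A/\mm)^\times \to \Cl_\mm(A) \to \Cl(A) \to 0,
\]
derived directly from the definitions (using weak approximation for surjectivity on the right). Its outer terms are finite because $\Cl(A)$ is the finite ideal class group of the Dedekind domain $A$ and $A/\mm$ is a finite ring, being a product of Artinian quotients of $\OO_\pp$ at primes $\pp\mid\mm$ whose residue fields are finite extensions of $\FF_q$.
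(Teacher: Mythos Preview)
Your proposal is correct. Both routes you sketch are valid, but neither is quite what the paper does. The paper's argument is closest to your alternative: it uses the surjection $\Cl_\mm(A)\twoheadrightarrow\Cl(A)$, but instead of bounding the kernel by $(A/\mm)^\times$, it observes that the kernel consists of classes $\overline{xA}$ with $v_\pp(x)=0$ for all $\pp\mid\mm$, and that such classes embed into $C_\mm^0$, the degree-zero part of the ray class group of $K$ introduced just before Definition~2.3. The finiteness of $C_\mm^0$ was already recorded there, so the paper avoids both the idelic compactness argument and the explicit computation of $(A/\mm)^\times$.

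Your main idelic argument is a genuinely different route: it front-loads the passage to $C_K$ (which the paper postpones to the subsequent corollary) and replaces the classical input ``$C_\mm^0$ is finite'' with the equivalent idelic input ``$C_K^0$ is compact''. This buys you a proof that is self-contained modulo a single standard citation, whereas the paper's proof leans on the earlier statement about $C_\mm^0$. Your alternative exact-sequence argument is the most elementary of the three, since the finiteness of $(A/\mm)^\times$ follows immediately from the residue fields being finite; the paper's detour through $C_\mm^0$ is slightly less direct but keeps the narrative tied to the ray class group of $K$ already in play.
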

\begin{proof}
 In any
   ideal class of $A$, we can find an ideal in it coprime to $\mm$, so we have a surjection $\Cl_{\mm}(A) \to \Cl(A)$. The kernel of this
  map is the subgroup of $\Cl_{\mm}(A)$ consisting of the principal
  ideal classes of the form $\overline{xA}$ where $v_{\pp}(x)=0$ for $\pp|\mm$. These classes can be viewed as elements in
  $C_{\mm}^0$. Since $C^0_{\mm}$ is finite, we see that $\Cl_{\mm}(A)$
  is an extension of finite groups and is itself finite.
\end{proof}

\begin{cor}
There is a finite abelian extension $K_{\mm}^A$ of $K$ whose Galois group is isomorphic to $\Cl_{\mm}(A)$ via the Artin map. 
\end{cor}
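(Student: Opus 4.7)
The plan is to deduce this corollary from the Existence Theorem (Theorem \ref{existthm}) applied to an appropriate finite-index open subgroup of $C_K$, and then to identify the resulting extension's Galois group with $\Cl_{\mm}(A)$ via the compatibility between the idelic and ideal-theoretic Artin maps.

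First I would combine the two parts of Proposition \ref{raysasideles}. Part (2) gives an isomorphism $\II_{\mm}/K_{\mm,1} \cong C_K$, and part (1) gives $\II_{\mm}/(K_{\mm,1}W_{\mm}) \cong \Cl_{\mm}(A)$. Pushing $W_{\mm}$ through the isomorphism of part (2), we obtain an isomorphism $C_K/H \cong \Cl_{\mm}(A)$, where $H$ is the image of $W_{\mm}K^{\times}$ in $C_K$. By Proposition \ref{Clmafinite}, $\Cl_{\mm}(A)$ is finite, so $H$ has finite index in $C_K$. Moreover, since
\[
W_{\mm} = \prod_{\pp\mid\infty} K_{\pp}^{\times} \times \prod_{\pp\nmid \mm\cdot\infty} \OO_{\pp}^{\times} \times \prod_{\pp\mid\mm} U_{\mm,\pp}
\]
is open in $\II$ (each factor is open, and all but finitely many factors equal $\OO_\pp^\times$), its image $H$ is open in $C_K$ under the quotient topology.

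Next I would apply Theorem \ref{existthm} to the finite-index open subgroup $H$ to obtain a finite abelian extension $K_{\mm}^A/K$ with $N(C_{K_{\mm}^A}) = H$. Idelic Artin reciprocity then gives
\[
\Gal(K_{\mm}^A/K) \cong C_K / N(C_{K_{\mm}^A}) = C_K/H \cong \Cl_{\mm}(A),
\]
which produces an extension of the desired size.

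The main thing to verify — and where the argument is most delicate — is that this isomorphism is induced by the classical Artin map sending a prime $\pp$ coprime to $\mm$ to its Frobenius in $\Gal(K_{\mm}^A/K)$. For this I would use the compatibility $\phi_{L/K}((x_\pp)) = \psi_{L/K}(\mathrm{id}((x_\pp)))$ on $\II_{S_{ram}}$ stated before Theorem \ref{existthm}. Concretely, since $W_{\mm}$ contains $\prod_{\pp\nmid\mm\cdot\infty}\OO_\pp^{\times}$ and $\prod_{\pp\mid\infty}K_{\pp}^{\times}$, every prime of $K$ that ramifies in $K_{\mm}^A$ must divide $\mm$, so $\mm$ is admissible for $K_{\mm}^A/K$. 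For a prime $\pp$ of $A$ coprime to $\mm$, choosing an idele $(x_{\qq})$ that is a uniformizer at $\pp$ and trivial elsewhere, the compatibility identifies the image of $\pp$ under the induced map $\Cl_{\mm}(A) \to \Gal(K_{\mm}^A/K)$ with the Frobenius $(\pp, K_{\mm}^A/K)$. This is exactly the Artin map on $I_{\mm}(A)$, completing the proof.
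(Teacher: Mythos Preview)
Your proof is correct and follows essentially the same route as the paper: realize $\Cl_{\mm}(A)$ as $C_K$ modulo the open finite-index subgroup $W_{\mm}K^{\times}/K^{\times}$ and invoke the Existence Theorem. The paper's proof focuses on the identification $(K_{\mm,1}W_{\mm})/K_{\mm,1}\simeq (K^{\times}W_{\mm})/K^{\times}$ via weak approximation---precisely the step you summarize as ``pushing $W_{\mm}$ through the isomorphism of part~(2)''---while you go further and check that the resulting isomorphism is induced by the classical Artin map, a point the paper leaves implicit.
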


\begin{proof}
The injection $\II_{\mm}\to \II$, when restricted to $K_{\mm,1}W_{\mm}$, induces an isomorphism 
\[
(K_{\mm,1}W_{\mm})/K_{\mm,1}\simeq (K^{\times}W_{\mm})/K^{\times}.
\]
Indeed, given $c\in K^{\times}$ and $(x_{\pp})\in W_{\mm}$ we can use weak approximation to find $c'\in K^{\times}$ such that $c/c'\in K_{\mm,1}$ as in the proof of part (2) of Theorem \ref{raysasideles}. Then $\frac{c}{c'}(x_{\pp})\in K_{\mm,1}W_{\mm}$ maps to the image of $c\cdot(x_{\pp})$ in $(K^{\times}W_{\mm})/K^{\times}$. Thus $\Cl_{\mm}(A)$ is a quotient of $C_K$ by an open subgroup
  subgroup, whose index is finite by Proposition \ref{Clmafinite}. The Existence Theorem then guarantees such an extension $K_{\mm}^A/K$ as desired.
\end{proof}

 Together with an application of the Chebotarev Density Theorem, the above
  discussion gives us the following theorem:
  
\begin{thm}\label{chebA}
Let $A$ and $\mm$ be as above. Any ideal class in $\Cl_{\mm}(A)$ contains infinitely
  many prime ideals of $A$.
\end{thm}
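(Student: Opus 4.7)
My plan is to combine the corollary just proved (which provides an abelian extension $K_\mm^A/K$ with $\Gal(K_\mm^A/K) \cong \Cl_\mm(A)$) with the Chebotarev Density Theorem for global function fields. The proof has three steps: identify the isomorphism concretely, check that it sends the class of a prime ideal to its Frobenius, and then apply Chebotarev.

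First I would fix a class $\overline{\fa} \in \Cl_\mm(A)$ and let $\sigma \in \Gal(K_\mm^A/K)$ be its image under the Artin isomorphism from the corollary. By the Existence Theorem applied in the proof of the corollary, $N(C_{K_\mm^A}) = K^\times W_\mm / K^\times \subseteq C_K$, and under the chain of isomorphisms built in Proposition \ref{raysasideles} and the corollary, the idelic Artin map $\phi_{K_\mm^A/K}: C_K \to \Gal(K_\mm^A/K)$ factors through the quotient $\II_\mm/(K_{\mm,1}W_\mm) \cong \Cl_\mm(A)$, with the resulting composition being precisely the asserted isomorphism.

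Second I would translate this into an ideal-theoretic statement. For any prime $\pp$ of $K$ with $\pp \notin S_\infty$, $\pp \nmid \mm$, and $\pp$ unramified in $K_\mm^A$, let $\alpha_\pp \in \II$ be the idele with a uniformizer in the $\pp$-component and $1$ elsewhere. Then $\alpha_\pp \in \II_\mm$, its image under $\mathrm{id}$ is $\pp \in I_\mm(A)$, which represents the class of $\pp \cap A$ in $\Cl_\mm(A)$. On the other hand, since $\alpha_\pp \in \II_{S_{ram}}$ for $S_{ram}$ the ramified primes of $K_\mm^A/K$, the defining property of $\phi_{K_\mm^A/K}$ recalled before Theorem \ref{existthm} gives $\phi_{K_\mm^A/K}(\alpha_\pp) = \psi_{K_\mm^A/K}(\pp) = (\pp, K_\mm^A/K)$. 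Thus under the identification $\Cl_\mm(A) \cong \Gal(K_\mm^A/K)$, the class of $\pp \cap A$ corresponds to $(\pp, K_\mm^A/K)$.

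Third I would invoke the Chebotarev Density Theorem for the finite Galois extension $K_\mm^A/K$ of global function fields, which guarantees that the set of primes $\pp$ of $K$ (unramified in $K_\mm^A$) with $(\pp, K_\mm^A/K) = \sigma$ has positive Dirichlet density and is in particular infinite. Discarding the finitely many such primes lying in $S_\infty$ or dividing $\mm$ leaves infinitely many primes $\pp$ of $K$ such that $\pp \cap A$ is a prime of $A$ lying in the class $\overline{\fa}$, as desired. The only mildly subtle point is the compatibility in step two—matching the ideal class representative with the corresponding Frobenius—which ultimately reduces to the compatibility of the idelic and ideal-theoretic Artin maps on prime-power ideles coprime to the conductor.
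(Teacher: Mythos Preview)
Your proposal is correct and follows exactly the approach the paper indicates: the paper simply states that the preceding discussion (i.e., the corollary furnishing $K_\mm^A$ with $\Gal(K_\mm^A/K)\cong\Cl_\mm(A)$ via the Artin map) together with the Chebotarev Density Theorem yields the result, without writing out further details. Your three steps spell out precisely this argument, and the compatibility check in step two is the only point requiring care, which you handle correctly via the defining property of the idelic Artin map on ideles supported away from the ramified primes.
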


\section{$S$-integers are universally definable in global function fields}\label{univdef}

In Sections \ref{QAfacts} and \ref{integrality} we will review facts and results from \cite{Park}. The rest of Park's argument does not extend to the function field setting, so we use results of Section \ref{ffcft} on idelic class field theory to finish the proof in Section \ref{integralatsigma} and Section \ref{univdefthmpf}.

\subsection{Notation and facts about quaternion algebras}\label{QAfacts}

 Throughout this section, let $K$ be a global function field
of odd characteristic. Let $a,b\in K^{\times}$. We recall the following notation from \cite{Park}:

\begin{enumerate}
\item Given a prime $\pp\in S_K$, let $v:=v_{\pp}$ be its associated
  valuation, normalized so that it takes values in
  $\ZZ\cup\{\infty\}$. Define $K_{\pp}$ to be the completion of $K$ at
  $\pp$, $R_{\pp}$ the ring of integers in $K_{\pp}$, the maximal
  ideal of $R_{\pp}$ by $\hat{\pp}$, and $\FF_{\pp}$ the residue field
  of $\pp$. Set
  $U_{\pp}:=\{s\in \FF_{\pp}: x^2-sx+1 \text{ is irreducible over }
  \FF_{\pp}\}$ and let $\red_{\pp}:R_{\pp}\to \FF_{\pp}$ be the
  reduction map. Let $\OO_{\pp}:=R_{\pp}\cap K$; this is the local
  ring of the prime $\pp$ in $K$.
\item $H_{a,b}=K\oplus \alpha K \oplus \beta K \oplus \alpha\beta K$, the quaternion algebra over $K$ with multiplication given by $\alpha^2=a,\beta^2=b,\alpha\beta=-\beta\alpha$. 
\item Given $x:=x_1+x_2\alpha+x_3\beta +x_4\alpha\beta$, define $\overline{x}:=x_1-x_2\alpha-x_3\beta-x_4\alpha\beta$. This is the standard involution on $H_{a,b}$. Define the (reduced) trace of $x$ to be $x+\overline{x}=2x_1$ and the (reduced) norm of $x$ to be $x\cdot \overline{x}=x_1^2-ax_2^2-bx_3^2+abx_4^2$. 
\item $\Delta_{a,b}=\{\pp\in S_K: H_{a,b}\otimes_K K_{\pp}\not\cong M_2(K_{\pp})\}$, that is, the set of primes where $H_{a,b}$ ramifies. 
\item $(a,b)_{\pp}=\begin{cases} 1 &: \pp \not\in \Delta_{a,b} \\ -1  &: \pp\in \Delta_{a,b}\end{cases}$, the Hilbert symbol of $K_{\pp}$. 
\item $S_{a,b} = \{2x_1\in K: \exists x_2,x_3,x_4 \text{ such that } x_1^2-ax_2^2-bx_3^2+abx_4^2=1\}$. This is the set of traces of norm one elements of $H_{a,b}$. 
\item $T_{a,b} = S_{a,b}+S_{a,b}$.
\end{enumerate}

Given a prime $\pp\in S_K$, we similarly define $S_{a,b}(K_{\pp})$ and $T_{a,b}(K_{\pp})$ just by replacing $K$ with $K_{\pp}$ in the above definitions.

\begin{lem}\label{facts}
$\left.\right.$
\begin{enumerate}
\item If $\pp\not\in \Delta_{a,b}$, then $S_{a,b}(K_{\pp})=K_{\pp}$.
\item If $\pp\in \Delta_{a,b}$ then $\red_{\pp}^{-1}(U_{\pp})\subseteq S_{a,b}(K_{\pp})\subseteq \OO_{\pp}$. 
\item For any $\pp$ with $|\FF_{\pp}|>11$, we have $\FF_{\pp}=U_{\pp}+U_{\pp}$.
\item For each $a,b\in K^{\times}$, 
\[
S_{a,b}=K\cap \bigcap_{\pp\in \Delta_{a,b}} S_{a,b}(K_{\pp}).
\]
\end{enumerate}
\end{lem}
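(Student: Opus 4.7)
My plan is to treat the four parts independently, since they draw on different ingredients: the local structure of quaternion algebras for (1) and (2), a character sum estimate over finite fields for (3), and the Hasse principle for quaternion algebras for (4). Parts (1) and (2) will be used inside the proof of (4).

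For (1), $\pp \notin \Delta_{a,b}$ means $H_{a,b} \otimes_K K_{\pp} \cong M_2(K_{\pp})$, so $S_{a,b}(K_{\pp})$ coincides with the set of traces of matrices in $\mathrm{SL}_2(K_{\pp})$; the companion matrix of $z^2 - tz + 1$ realizes any $t \in K_{\pp}$, giving $S_{a,b}(K_{\pp}) = K_{\pp}$. For the upper inclusion in (2), if $x \in H_{a,b} \otimes K_{\pp}$ has reduced norm $1$, then $x$ satisfies $z^2 - (2x_1)z + 1 = 0$ and is therefore integral over $R_{\pp}$; since $H_{a,b} \otimes K_{\pp}$ is the unique quaternion division algebra over $K_{\pp}$, its integral elements form the unique maximal order, whose reduced traces lie in $R_{\pp}$ (which I read as what the statement denotes $\OO_{\pp}$). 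For the lower inclusion, if $t \in R_{\pp}$ reduces to $s \in U_{\pp}$, then Hensel's lemma forces $z^2 - tz + 1$ to be irreducible over $K_{\pp}$, defining a separable quadratic extension $L/K_{\pp}$; every separable quadratic extension embeds into the local quaternion division algebra, so $L \hookrightarrow H_{a,b} \otimes K_{\pp}$, producing a norm-one element of reduced trace $t$.

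For (3), set $q := |\FF_{\pp}|$ and let $\chi$ be the nontrivial quadratic character on $\FF_{\pp}^{\times}$. Since $z^2 - sz + 1$ has discriminant $s^2 - 4$, we have $s \in U_{\pp}$ iff $\chi(s^2 - 4) = -1$, so $\mathbf{1}_{U_{\pp}}(s)$ agrees with $\tfrac{1}{2}(1 - \chi(s^2-4))$ away from the two exceptional points $s = \pm 2$. Counting representations of a fixed $t$ as $s_1 + s_2$ with $s_i \in U_{\pp}$ yields a main term of $q/4$, boundary corrections supported on the four points $s \in \{\pm 2, t \pm 2\}$, and an error term controlled by the Weil-type sum
\[
E(t) := \sum_{s \in \FF_{\pp}} \chi\bigl((s^2 - 4)((t-s)^2 - 4)\bigr),
\]
which is $\le 3\sqrt{q}$ in absolute value for generic $t$ (and explicitly computable in the degenerate cases $t \in \{0, \pm 4\}$ where the polynomial becomes a square times a factor of degree $\le 2$). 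Combining these, the count is positive provided $q > 11$; pinning the boundary bookkeeping down tightly enough to reach exactly the stated threshold (rather than a crude ``$q$ sufficiently large'') is the main obstacle in this part.

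For (4), the inclusion $\subseteq$ is immediate. For the reverse, given $t$ in the intersection over $\Delta_{a,b}$, part (1) promotes the hypothesis to membership in $S_{a,b}(K_{\pp})$ at every prime $\pp$ of $K$. When $z^2 - tz + 1$ already splits in $K$, one of its roots is a norm-one scalar witnessing $t \in S_{a,b}$; otherwise $L := K[z]/(z^2 - tz + 1)$ is a separable quadratic extension, and $t \in S_{a,b}(K_{\pp})$ is equivalent to the local embedding $L \otimes_K K_{\pp} \hookrightarrow H_{a,b} \otimes K_{\pp}$ (for each $\pp$ where the tensor product remains a field). By the Albert--Brauer--Hasse--Noether theorem, a global embedding $L \hookrightarrow H_{a,b}$ exists if and only if the local embedding exists at every completion, which is precisely the hypothesis. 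This supplies the desired norm-one element of reduced trace $t$ and completes the lemma.
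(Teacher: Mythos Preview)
The paper gives no independent argument here; it simply cites Lemma~2.2 of \cite{Park}, so there is no proof in the paper against which to compare yours. Your treatments of (1), (2), and the outline for (3) are the standard ones and are correct; your acknowledgment that the exact threshold $q>11$ in (3) requires more bookkeeping than the crude Weil bound already supplies is accurate.

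There is, however, a genuine slip in your argument for (4). When $z^2-tz+1$ splits over $K$ you claim that ``one of its roots is a norm-one scalar witnessing $t\in S_{a,b}$.'' This is false unless $t=\pm 2$: for a scalar $r\in K\subset H_{a,b}$ the reduced norm is $r\bar r=r^2$, not the constant term of its minimal polynomial, so a root of $z^2-tz+1$ has reduced norm $1$ only when that root is $\pm1$. The repair is short. For $\pp\in\Delta_{a,b}$ and $t\neq\pm 2$, suppose $y\in H_{a,b}\otimes K_{\pp}$ has reduced norm $1$ and reduced trace $t$; then $y$ satisfies $z^2-tz+1=0$, and if this polynomial factored over $K_{\pp}$ as $(z-r)(z-r')$ the relation $(y-r)(y-r')=0$ in a division ring would force $y\in K_{\pp}$, giving reduced norm $y^2=1$ and reduced trace $2y=\pm 2$, a contradiction. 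So $z^2-tz+1$ is irreducible over $K_{\pp}$. Thus the hypothesis $t\in\bigcap_{\pp\in\Delta_{a,b}}S_{a,b}(K_{\pp})$ with $t\neq\pm 2$ forces either $\Delta_{a,b}=\emptyset$, in which case $H_{a,b}$ is globally split and $S_{a,b}=K$, or irreducibility over some $K_{\pp}$ and hence over $K$, where your embedding argument applies. Alternatively, one can bypass the case split by applying Hasse--Minkowski directly to the assertion that $aX^2+bY^2-abZ^2$ represents $(t/2)^2-1$; this is the route Poonen and Park take.
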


\begin{proof}
See \cite{Park}, Lemma 2.2; the arguments work when $K$ is any global field. 

\end{proof}

\begin{prop}
For any $a,b\in K^{\times}$, we have
\[
T_{a,b}=\bigcap_{\pp\in \Delta_{a,b}} \OO_{\pp}.
\]
\end{prop}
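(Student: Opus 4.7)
The plan is to prove the two inclusions separately. The easy direction is $T_{a,b}\subseteq \bigcap_{\pp\in\Delta_{a,b}}\OO_{\pp}$: given $t=s_1+s_2$ with $s_1,s_2\in S_{a,b}$, part (4) of Lemma~\ref{facts} says $s_i\in S_{a,b}(K_\pp)$ for every $\pp\in\Delta_{a,b}$, and part (2) says $S_{a,b}(K_\pp)\subseteq R_{\pp}$. Since $s_i\in K$, we get $s_i\in R_{\pp}\cap K=\OO_{\pp}$, so $t\in\OO_{\pp}$ for each such $\pp$.

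For the reverse inclusion, fix $t\in\bigcap_{\pp\in\Delta_{a,b}}\OO_{\pp}$. The strategy is to produce a single global element $s_1\in K$ such that both $s_1$ and $t-s_1$ lie simultaneously in $S_{a,b}(K_\pp)$ for every $\pp\in\Delta_{a,b}$; then Lemma~\ref{facts}(4) will promote them to elements of $S_{a,b}$, giving $t=s_1+(t-s_1)\in T_{a,b}$. Locally, for each $\pp\in\Delta_{a,b}$ with residue field satisfying $|\FF_\pp|>11$, Lemma~\ref{facts}(3) lets us write $\red_{\pp}(t)=u_1^{(\pp)}+u_2^{(\pp)}$ with $u_i^{(\pp)}\in U_{\pp}$; lifting $u_1^{(\pp)}$ to any $s_1^{(\pp)}\in R_{\pp}$ with $\red_\pp(s_1^{(\pp)})=u_1^{(\pp)}$ forces $\red_{\pp}(t-s_1^{(\pp)})=u_2^{(\pp)}\in U_{\pp}$ as well, so both $s_1^{(\pp)}$ and $t-s_1^{(\pp)}$ lie in $\red_{\pp}^{-1}(U_{\pp})\subseteq S_{a,b}(K_{\pp})$ by Lemma~\ref{facts}(2).

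Because $\red_{\pp}^{-1}(U_{\pp})$ is open in $K_{\pp}$ (it is a union of cosets of $\hat{\pp}$), the conditions $s_1\in\red_{\pp}^{-1}(U_{\pp})$ and $t-s_1\in\red_{\pp}^{-1}(U_{\pp})$ are open in $s_1$. Since $\Delta_{a,b}$ is finite, weak approximation supplies an $s_1\in K$ that is close enough $\pp$-adically to each $s_1^{(\pp)}$ so that $s_1$ and $t-s_1$ remain in $\red_{\pp}^{-1}(U_{\pp})\subseteq S_{a,b}(K_{\pp})$ at every $\pp\in\Delta_{a,b}$. Lemma~\ref{facts}(4) then gives $s_1,\,t-s_1\in S_{a,b}$, completing the argument.

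The main obstacle is the handling of primes $\pp\in\Delta_{a,b}$ whose residue field has $|\FF_\pp|\leq 11$, where Lemma~\ref{facts}(3) fails (e.g.\ for $|\FF_{\pp}|=3$ one computes $U_\pp=\{0\}$, so $U_\pp+U_\pp\neq\FF_\pp$). For such primes one has to argue directly that $S_{a,b}(K_\pp)+S_{a,b}(K_\pp)\supseteq R_{\pp}$, using that $S_{a,b}(K_\pp)$ properly contains $\red_{\pp}^{-1}(U_\pp)$: any $t\in R_\pp$ with $\red_\pp(t)=\pm 2$ arises as a trace of a norm-one element in the local quaternion division algebra after a suitable refinement, because every quadratic extension of $K_\pp$ splits the division quaternion algebra $H_{a,b}\otimes K_\pp$. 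One then verifies by a short local computation that such elements suffice to realize every residue class of $R_{\pp}$ as a sum of two elements of $S_{a,b}(K_\pp)$, after which the same weak approximation argument runs through uniformly.
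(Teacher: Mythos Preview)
Your approach is correct and is essentially the argument in \cite[Proposition~2.3]{Park}, which is all the paper invokes here: one inclusion from Lemma~\ref{facts}(2),(4), and the other by choosing local decompositions via Lemma~\ref{facts}(2),(3) and globalizing with weak approximation through Lemma~\ref{facts}(4). Your identification of the small-residue-field primes as the only obstacle is exactly right; the sketch you give there is a bit loose, but the claim $S_{a,b}(K_\pp)+S_{a,b}(K_\pp)=R_\pp$ for $\pp\in\Delta_{a,b}$ does go through by the kind of direct local check you indicate (using that $\pm 2\in S_{a,b}(K_\pp)$ and $\hat\pp\subseteq S_{a,b}(K_\pp)$, since $s^2-4$ is then a nonsquare unit), after which your weak-approximation step applies uniformly.
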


\begin{proof}
See \cite[Proposition 2.3]{Park}.
\end{proof}

In our setting, we also have the following formula for the Hilbert symbol from \cite[XIV.3.8]{Ser79}:,
\begin{equation}\label{hsymb}
  (a,b)_{\pp}= \left[(-1)^{v_{\pp}(a)v_{\pp}(b)}\left(\frac{a^{v_{\pp}(b)}}{b^{v_{\pp}(a)}}\right)\right]^{(|\FF_{\pp}|-1)/2}.
\end{equation}

Hence for a $\pp$-adic unit $a$, $(a,p)_{\pp}=-1$ if and only if
$v_{\pp}(p)$ is odd and $\red_{\pp}(a)$ is not a square in
$\FF_{\pp}$.

\subsection{Partitioning the primes of $K$}\label{integrality}
Suppose $a$ is not a square in $K$. Then if $\pp$ is unramified in
$K(\sqrt{a})$, after possibly multiplying by a square of $K^{\times}$, $a$ is a $\pp$-adic unit. We can then identify
$\psi_{K(\sqrt{a})/K}(\pp)$ with the power residue symbol
$\left(\frac{a}{\pp}\right)_2$ (see Proposition 10.5 and 10.6 in
\cite{Rosen02}).  If the images of $a,b$ are distinct in $K^{\times}/K^{\times2}$,
we can use the splitting of $\pp$ in $\Gal(K(\sqrt{a},\sqrt{b})/K)$ to
study the Hilbert symbols $(a,p)_{\pp}$ and $(b,p)_{\pp}$, since if
$v_{\pp}(p)$ is odd, $(a,p)_{\pp}=\left(\frac{a}{\pp}\right)_2$. We have the following lemma:

\begin{lem}\label{Park3.8}
  Take $a,b\in K^{\times}$ whose images in $K^{\times}/K^{\times2}$
  are distinct and let $\mm$ be an admissible modulus for the
  extension $L:=K(\sqrt{a},\sqrt{b})/K$ and let
\[
\psi_{L/K}: I_{\mm}\to\{\pm1\}\times \{\pm 1\}
\]
be the Artin map. Suppose that $\mm$ is
  divisible also by all primes dividing $(ab)$. For a prime
  $\pp\in S_K$ such that $\pp\nmid \mm$,
  $\pp\in \Delta_{a,p}\cap \Delta_{b,p}$ if and only if $v_{\pp}(p)$
  is odd and $\psi_{L/K}(\pp)=(-1,-1)$.
\end{lem}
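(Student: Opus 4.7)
The plan is to unpack $\Delta_{a,p}\cap \Delta_{b,p}$ via the explicit Hilbert symbol formula and then match the resulting condition against the Artin symbol.

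First I would observe that the hypotheses on $\mm$ give us everything needed to simplify. Since $\pp \nmid \mm$ and $\mm$ is divisible by every prime in the support of $(ab)$, we have $v_\pp(a) = v_\pp(b) = 0$, and since $\mm$ is admissible for $L/K$ the prime $\pp$ is unramified in $L$, so $\psi_{L/K}(\pp)$ is a well-defined Frobenius in $\Gal(L/K)\cong \{\pm 1\}\times\{\pm 1\}$. (The isomorphism uses the assumption that $a$ and $b$ have distinct nontrivial classes in $K^\times/K^{\times 2}$, so that $L$ is a genuine biquadratic extension.)

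Next I would apply formula \eqref{hsymb} to $(a,p)_\pp$. With $v_\pp(a)=0$ the formula collapses to
\[
(a,p)_\pp = \bigl(a^{v_\pp(p)}\bigr)^{(|\FF_\pp|-1)/2} \bmod \pp.
\]
A quick parity check then shows: if $v_\pp(p)$ is even, $a^{v_\pp(p)}$ is a square in $K_\pp^\times$, so $(a,p)_\pp=1$ and $\pp\notin \Delta_{a,p}$; if $v_\pp(p)$ is odd, the exponent $v_\pp(p)(|\FF_\pp|-1)/2$ differs from $(|\FF_\pp|-1)/2$ by a multiple of $|\FF_\pp|-1$, hence
\[
(a,p)_\pp \;\equiv\; a^{(|\FF_\pp|-1)/2} \;\equiv\; \left(\tfrac{a}{\pp}\right)_2 \pmod \pp.
\]
The same computation with $b$ in place of $a$ shows $(b,p)_\pp = \left(\tfrac{b}{\pp}\right)_2$ when $v_\pp(p)$ is odd. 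Thus $\pp\in \Delta_{a,p}\cap \Delta_{b,p}$ if and only if $v_\pp(p)$ is odd and $\bigl(\tfrac{a}{\pp}\bigr)_2 = \bigl(\tfrac{b}{\pp}\bigr)_2 = -1$.

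Finally I would identify the power residue symbols with the global Artin map. Under the identification $\Gal(L/K)\cong \Gal(K(\sqrt a)/K)\times \Gal(K(\sqrt b)/K)\cong \{\pm 1\}^2$, the two components of $\psi_{L/K}(\pp)$ are the Frobenius of $\pp$ on $K(\sqrt a)$ and $K(\sqrt b)$ respectively; invoking Propositions 10.5 and 10.6 of \cite{Rosen02} (cited right before the lemma), these components are precisely $\bigl(\tfrac{a}{\pp}\bigr)_2$ and $\bigl(\tfrac{b}{\pp}\bigr)_2$. So $\psi_{L/K}(\pp)=(-1,-1)$ is exactly the quadratic-residue condition just obtained, completing the equivalence.

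No real obstacle arises here: the lemma is essentially a bookkeeping exercise combining the explicit Hilbert symbol formula with the compatibility between the Artin map and the quadratic residue symbol. The only mild care needed is keeping track of the "$v_\pp(p)$ odd" hypothesis, which is forced by the Hilbert symbol computation as soon as one asks for $(a,p)_\pp=-1$ with $a$ a $\pp$-adic unit.
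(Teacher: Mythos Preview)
Your proof is correct and follows exactly the approach the paper sets up: the paper itself merely cites \cite{Park}, Lemma~3.8, but the preparatory remarks just before the lemma (the consequence of formula~\eqref{hsymb} for a $\pp$-adic unit $a$, and the identification of $\psi_{K(\sqrt a)/K}(\pp)$ with $\bigl(\tfrac{a}{\pp}\bigr)_2$ via \cite{Rosen02}) are precisely the two ingredients you combine. Your write-up is simply the explicit unpacking of that citation.
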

\begin{proof}
  See \cite{Park}, Lemma 3.8. The only change in the argument is that
  we do not to worry about total positivity conditions defining
  $K_{\mm,1}$ since there are no Archimedean primes of a global
  function field.
\end{proof}

As in \cite{Park}, we partition primes of $K$ coprime to $\mm$ based on their image under $\psi_{L/K}$. Choose $a,b\in K^{\times}$ and $L$ as in Lemma \ref{Park3.8} and set
\[
\PP(p):=\{\pp \in S_K: v_{\pp}(p) \text { is odd}.\}.
\]
Also, for $(i,j)\in \Gal(L/K)$, $i,j\in \{\pm1\}$, set 
\[
\PP^{(i,j)}=\{\pp\in S_K:\pp\in I_{\mm} \text{ and }\psi_{L/K}(\pp)=(i,j)\}
\]
and set 
\[
\PP^{(i,j)}(p)=\PP(p)\cap\PP^{(i,j)}. 
\]

\begin{lem}\label{uptomodulus}
  Suppose $p\in K^{\times}$ and let $\mm$ be a modulus as in Lemma
  \ref{Park3.8}. Additionally, suppose $(p)$ and $\mm$ are
  coprime. Then we have the following identification of sets of
  primes, where the two sets differ at most by primes dividing the
 modulus.
\begin{align*}
\PP^{(-1,-1)}(p) &\leftrightarrow \Delta_{a,p}\cap \Delta_{b,p}, \\
\PP^{(-1,1)}(p) &\leftrightarrow \Delta_{a,p}\cap \Delta_{ab,p},  \\ 
\PP^{(1,-1)}(p) &\leftrightarrow \Delta_{b,p}\cap \Delta_{ab,p}.\\
\end{align*}
\end{lem}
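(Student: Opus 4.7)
The plan is to reduce all three identifications to three applications of Lemma \ref{Park3.8}, one for each pair drawn from $\{(a,b),(a,ab),(b,ab)\}$. The key observation is that all three of these pairs generate the same biquadratic extension $L=K(\sqrt{a},\sqrt{b})$ over $K$, since $K(\sqrt{a},\sqrt{ab})=K(\sqrt{b},\sqrt{ab})=L$, and each pair consists of two classes that are distinct in $K^{\times}/K^{\times2}$ (the distinctness for $(a,ab)$ and $(b,ab)$ follows from the fact that $a,b$ are distinct modulo squares, which forces $a,b,ab$ to all be non-square). By enlarging $\mm$ if necessary, I may assume it is admissible for $L/K$ and divisible by every prime in the support of $(a)$, $(b)$, and $(ab)$; in particular, the primes dividing $(a\cdot ab)$ and $(b\cdot ab)$ also divide $\mm$. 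Since the conclusion of the lemma is only claimed up to primes dividing $\mm$, this enlargement is harmless.

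The first identification $\PP^{(-1,-1)}(p) \leftrightarrow \Delta_{a,p}\cap \Delta_{b,p}$ is then immediate from Lemma \ref{Park3.8} applied to $(a,b)$. For the second, I apply the same lemma to the pair $(a,ab)$: it says that, up to primes dividing $\mm$, $\Delta_{a,p}\cap \Delta_{ab,p}$ consists of primes $\pp$ with $v_{\pp}(p)$ odd and $\psi_{L/K}(\pp) = \sigma$, where $\sigma$ is the unique element of $\Gal(L/K)$ satisfying $\sigma(\sqrt{a})=-\sqrt{a}$ and $\sigma(\sqrt{ab})=-\sqrt{ab}$. Writing $\sqrt{ab}=\sqrt{a}\sqrt{b}$ (up to sign), the two conditions force $\sigma(\sqrt{b})=\sqrt{b}$, so under the identification of $\Gal(L/K)$ with $\{\pm 1\}\times\{\pm 1\}$ via $(a,b)$-coordinates we have $\sigma = (-1,1)$. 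Hence $\Delta_{a,p}\cap \Delta_{ab,p}$ matches $\PP^{(-1,1)}(p)$ up to primes dividing $\mm$.

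The third identification is symmetric: applying Lemma \ref{Park3.8} to the pair $(b,ab)$ gives a correspondence with the $\tau\in\Gal(L/K)$ satisfying $\tau(\sqrt{b})=-\sqrt{b}$ and $\tau(\sqrt{ab})=-\sqrt{ab}$, which forces $\tau(\sqrt{a})=\sqrt{a}$, i.e.\ $\tau=(1,-1)$ in $(a,b)$-coordinates, yielding $\PP^{(1,-1)}(p)$.

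The only real subtlety to check, and the place where I expect the bookkeeping to require care, is the translation between the Artin map's image in $\Gal(L/K)$ as a ``$(-1,-1)$-element'' for each pair and its image in the fixed $(a,b)$-coordinates used to define the sets $\PP^{(i,j)}$; but this is a purely group-theoretic identification of the three nontrivial elements of $\Gal(L/K)$ with the three inertness conditions in $K(\sqrt{a})$, $K(\sqrt{b})$, $K(\sqrt{ab})$, and involves no deeper input than Lemma \ref{Park3.8} itself.
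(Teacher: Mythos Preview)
Your argument is correct and is the natural proof the paper defers to \cite[Lemma~3.9]{Park}: apply Lemma~\ref{Park3.8} to each of the pairs $(a,b)$, $(a,ab)$, $(b,ab)$ and translate the resulting ``$(-1,-1)$'' element into the fixed $(a,b)$-coordinates on $\Gal(L/K)$. One minor slip in your write-up: distinctness of $a,b$ modulo squares does not by itself force both to be non-square (take $a=1$, $b$ a non-square); what you actually need is that $[L:K]=4$, which is built into the identification $\Gal(L/K)\cong\{\pm1\}^2$ in Lemma~\ref{Park3.8} and is part of the standing hypotheses on $a,b$ in this section, so the conclusion you want (that $a,b,ab$ are pairwise distinct non-squares) does hold.
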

\begin{proof}
See \cite[Lemma 3.9]{Park}. 
\end{proof}




\begin{lem}
  Let $\pp\in S_K$ with $\pp\nmid \mm$ and suppose that
  $\psi_{L/K}(\pp)=(i,j)$ with $(i,j)\not=(1,1)$. Then
  $\pp\in \PP^{(i,j)}(p)$ for some $p\in K^{\times}$. Hence there
  exist $u,v\in K^{\times}$ such that
  $\pp\in \Delta_{u,p}\cap \Delta_{v,p}$. If $\psi_{L/K}(\pp)=(1,1)$
  then there exist $p,q\in K^{\times}$ so that
  $\pp\in \Delta_{ap,q}\cap \Delta_{bp,q}$.
\end{lem}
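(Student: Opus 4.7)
The proof splits into two cases according to whether $\psi_{L/K}(\pp) = (1,1)$ or not. In both, I first observe that $\pp \nmid \mm$ combined with the hypothesis (from Lemma~\ref{Park3.8}) that $\mm$ is divisible by all primes of $(ab)$ forces $v_\pp(a) = v_\pp(b) = 0$, so $a$, $b$, and $ab$ are $\pp$-adic units.

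For $\psi_{L/K}(\pp) = (i,j) \ne (1,1)$, I would apply weak approximation at the finite set $\{\pp\} \cup \{\qq \in S_K : \qq \mid \mm\}$ to produce $p \in K^\times$ with $v_\pp(p) = 1$ and $v_\qq(p) = 0$ for every $\qq \mid \mm$. Then $(p)$ is coprime to $\mm$ and $\pp \in \PP(p) \cap \PP^{(i,j)} = \PP^{(i,j)}(p)$. Since $\pp \nmid \mm$, Lemma~\ref{uptomodulus} identifies this with membership in $\Delta_{u,p} \cap \Delta_{v,p}$, where $(u,v)$ is $(a,b)$, $(a,ab)$, or $(b,ab)$ for $(i,j) = (-1,-1)$, $(-1,1)$, $(1,-1)$ respectively.

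When $\psi_{L/K}(\pp) = (1,1)$, the prime $\pp$ splits completely in $L = K(\sqrt{a},\sqrt{b})$, so $\red_\pp(a)$ and $\red_\pp(b)$ both lie in $\FF_\pp^{\times 2}$. The plan is to twist both $a$ and $b$ by a common non-square: by weak approximation I would pick $p \in K^\times$ with $v_\pp(p) = 0$ and $\red_\pp(p) \in \FF_\pp^\times \setminus \FF_\pp^{\times 2}$ (such non-squares exist because $\ch(K)$ is odd), and independently pick $q \in K^\times$ with $v_\pp(q) = 1$. Then $ap$ and $bp$ are $\pp$-adic units with non-square reductions, so formula~(\ref{hsymb}) gives $(ap,q)_\pp = (bp,q)_\pp = -1$, placing $\pp \in \Delta_{ap,q} \cap \Delta_{bp,q}$.

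The main point to notice is the twist in the $(1,1)$ case: since $a$ and $b$ already reduce to squares at $\pp$, no $p$ of odd $\pp$-adic valuation can by itself push $\pp$ into $\Delta_{a,p} \cap \Delta_{b,p}$; multiplying $a$ and $b$ by a common non-square unit modulo $\pp$ is what converts complete splitting into ramification in the relevant quaternion algebras. The rest is weak approximation and a routine application of the Hilbert-symbol formula.
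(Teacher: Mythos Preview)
Your proof is correct and follows essentially the same direct approach as the paper, which simply defers to \cite[Lemmas 3.11, 3.12]{Park}: construct $p$ (and, in the $(1,1)$ case, $q$) via weak approximation so that the Hilbert-symbol formula~(\ref{hsymb}) forces the required local ramification at $\pp$. Your handling of the $(1,1)$ case---choosing $p$ to be a $\pp$-adic unit with non-square reduction so that $ap$ and $bp$ become non-square units, then taking $q$ with $v_\pp(q)$ odd---is exactly the twist Park uses.
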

\begin{proof}
See \cite[Lemmas 3.11, 3.12]{Park}.
\end{proof}

\begin{defn}\label{Jabdef}
Let $a,b\in K^{\times}$. Let 
\[
J_{a,b} := \bigcap_{\pp\in \Delta_{a,b}\cap(\PP(a)\cup \PP(b))} \pp\OO_{\pp}.
\]
\end{defn}

\begin{prop}\label{Jabdio}
$J_{a,b}$ is diophantine over  $K$.
\end{prop}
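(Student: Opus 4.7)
The plan rests on two observations. First, the Hilbert symbol formula \eqref{hsymb} forces $\Delta_{a,b}\subseteq \PP(a)\cup \PP(b)$: if both $v_\pp(a)$ and $v_\pp(b)$ are even, then writing $a=\pi^{v_\pp(a)}u$ and $b=\pi^{v_\pp(b)}w$ with $u,w\in \OO_\pp^{\times}$, the unit $(-1)^{v_\pp(a)v_\pp(b)}a^{v_\pp(b)}/b^{v_\pp(a)}$ is a square in $\OO_\pp^{\times}$, forcing $(a,b)_\pp=1$. Hence the intersection in the definition of $J_{a,b}$ ranges over all of $\Delta_{a,b}$, and at each such $\pp$ at least one of $v_\pp(a), v_\pp(b)$ is odd. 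Second, if $v_\pp(a)$ is odd and $s\in K^{\times}$ satisfies $as^2\in T_{a,b}$, then $v_\pp(as^2)=v_\pp(a)+2v_\pp(s)$ is an odd nonnegative integer, hence $\geq 1$; the analogous fact holds for $b$.

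With these in hand, I would set
\[
D_{a,b}:=\bigl\{z\in K : \exists\, s,t\in K^{\times},\; y_1,y_2\in T_{a,b}\text{ with } as^2,bt^2\in T_{a,b}\text{ and } z=as^2 y_1=bt^2 y_2\bigr\}.
\]
Since $T_{a,b}$ is diophantine via $T_{a,b}=S_{a,b}+S_{a,b}$ and the existential definition of $S_{a,b}$, all conditions in $D_{a,b}$ are polynomial equations with existentially quantified variables, so $D_{a,b}$ is diophantine over $K$. I would then prove $D_{a,b}=J_{a,b}$.

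The inclusion $D_{a,b}\subseteq J_{a,b}$ is immediate from the observations: at any $\pp\in \Delta_{a,b}$, at least one of $v_\pp(a), v_\pp(b)$ is odd, and then either $v_\pp(z)=v_\pp(as^2)+v_\pp(y_1)\geq 1$ or $v_\pp(z)=v_\pp(bt^2)+v_\pp(y_2)\geq 1$. For the reverse inclusion $J_{a,b}\subseteq D_{a,b}$, given $z\in J_{a,b}$ I would invoke the approximation theorem to produce $s\in K^{\times}$ with $v_\pp(s)=\lceil -v_\pp(a)/2\rceil$ at every $\pp\in \Delta_{a,b}$, which yields $v_\pp(as^2)\in \{0,1\}$ at these primes, and choose $t$ analogously. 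Then $y_1:=z/(as^2)$ and $y_2:=z/(bt^2)$ satisfy $v_\pp(y_i)\geq v_\pp(z)-1\geq 0$ at each $\pp\in \Delta_{a,b}$, so $y_1,y_2\in T_{a,b}$, giving $z\in D_{a,b}$.

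I expect the main obstacle to be the reverse inclusion: the approximation choice of $s,t$ depends essentially on the containment $\Delta_{a,b}\subseteq \PP(a)\cup\PP(b)$ established at the start, for otherwise a prime with both $v_\pp(a),v_\pp(b)$ even could admit $v_\pp(as^2)=v_\pp(bt^2)=0$ while $v_\pp(z)$ is uncontrolled, obstructing $y_1,y_2\in T_{a,b}$. This containment is precisely what odd residue characteristic buys us and what makes the diophantine definition of $J_{a,b}$ work in the function field setting.
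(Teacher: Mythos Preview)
Your proof is correct and follows essentially the same strategy as the lemmas from \cite{Park} that the paper cites: one expresses $J_{a,b}$ diophantinely via the already-diophantine ring $T_{a,b}=\bigcap_{\pp\in\Delta_{a,b}}\OO_\pp$, using multiplication by elements of $aK^{\times2}$ and $bK^{\times2}$ to force positive valuation at the primes where $v_\pp(a)$ (respectively $v_\pp(b)$) is odd. Your preliminary observation that $\Delta_{a,b}\subseteq \PP(a)\cup\PP(b)$ in odd residue characteristic is exactly what makes the intersection $\Delta_{a,b}\cap(\PP(a)\cup\PP(b))$ in Definition~\ref{Jabdef} redundant here, and your set $D_{a,b}$ is a clean packaging of the same idea.
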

\begin{proof}
See \cite[Lemmas 3.14, 3.15, 3.17]{Park}.
\end{proof}

\subsection{Controlling integrality with diophantine sets}\label{integralatsigma}

The remainder of the argument used to give a universal definition of the $S$-integers in $K$ requires a different approach than the one in \cite{Park}. In order to make our proof work, we need to find infinitely many $q\in K^{\times}$ with prescribed image under $\psi_{L/K}$ and which generate prime ideals in a certain Dedekind domain contained in $K$ in order to control the poles of $q$. 
\begin{lem}\label{3.19}
Let $S$ be a finite set of primes in $S_K$. We can choose $a,b\in K^{\times}$ so that the following hold: 
\begin{enumerate}
\item The images of $a$ and $b$ in $K^{\times}/K^{\times2}$ are distinct.
\item Any admissible modulus  $\mm$ for $L:=K(\sqrt{a},\sqrt{b})/K$ is divisible by the primes of $S$. 
\item Given a finite set of primes $S'\subseteq S_K$ disjoint from $S$, an ideal class $\overline{I}$ in $\Cl(\OO_{S'})$, and an element $\sigma \in \Gal(L/K)$, there exists a prime $\qq$ of $K$ such that $\qq\cap\OO_{S'}$ is in the ideal class $\overline{I}$, $\qq\in I_{\mm}$, and $\psi_{L/K}(\qq)=\sigma$. 
\item As fractional ideals, $(a)$ and $(b)$ are coprime, meaning their supports are disjoint.

\end{enumerate}
\end{lem}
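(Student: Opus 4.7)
The plan is to construct $a,b \in K^{\times}$ satisfying (1), (2), (4) by a weak approximation argument on their divisors, and then deduce (3) from a linear disjointness statement combined with the Chebotarev density theorem applied to a suitable compositum.

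For (1), (2), (4): I would partition $S = S_a \sqcup S_b$ with both parts nonempty when $|S|\geq 2$, and when $|S|\leq 1$ pick auxiliary primes outside $S$ to play the role of the missing part. By weak approximation, choose $a \in K^{\times}$ with $v_{\pp}(a)=1$ for $\pp \in S_a$ and $v_{\pp}(a)=0$ for $\pp \in S_b$, and then choose $b \in K^{\times}$ with $v_{\pp}(b)=1$ for $\pp \in S_b$, $v_{\pp}(b)=0$ for $\pp \in S_a$, and also $v_{\pp}(b)=0$ at each of the finitely many primes outside $S$ where $v_{\pp}(a)\neq 0$. Then (4) holds by construction. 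At each $\pp \in S$, either $v_{\pp}(a)$ or $v_{\pp}(b)$ is odd, so $\pp$ ramifies in $K(\sqrt{a})$ or $K(\sqrt{b})$ (using $\ch(K)$ odd), hence in $L$; by Artin reciprocity every admissible modulus is divisible by these ramified primes, giving (2). Because $(a)$ and $(b)$ are coprime and each carries a prime of odd valuation, each of $a$, $b$, $ab$ has a prime of odd valuation and is therefore a nonsquare; in particular $a$ and $b$ are distinct in $K^{\times}/K^{\times 2}$, proving (1).

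For (3): Fix a finite $S' \subseteq S_K$ disjoint from $S$ and let $K^{\OO_{S'}}$ denote the finite abelian extension of $K$ whose Galois group is isomorphic to $\Cl(\OO_{S'})$ via the Artin map, constructed exactly as $K^A$ was in Section~\ref{ffcft} with $A = \OO_{S'}$. The extension $K^{\OO_{S'}}/K$ is unramified at every prime of $K$. The nontrivial subfields of $L$ over $K$ are $K(\sqrt{a})$, $K(\sqrt{b})$, and $K(\sqrt{ab})$, each of which is ramified at some prime in $S$: the first two by construction, and $K(\sqrt{ab})$ because coprimality of $(a)$ and $(b)$ guarantees that the odd valuations in $a$ and in $b$ survive to $ab$. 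Consequently no nontrivial subfield of $L$ embeds in $K^{\OO_{S'}}$, so $L \cap K^{\OO_{S'}} = K$ and $\Gal(L\cdot K^{\OO_{S'}}/K) \cong \Gal(L/K) \times \Cl(\OO_{S'})$. Given $(\sigma, \overline{I})$, the Chebotarev density theorem applied to $L \cdot K^{\OO_{S'}}/K$ produces infinitely many primes $\qq$ of $K$, unramified in the compositum, whose Frobenius corresponds to $(\sigma, \overline{I})$; discarding the finitely many that divide $\mm$ or lie in $S'$, any remaining $\qq$ satisfies $\qq \in I_{\mm}$, $\psi_{L/K}(\qq) = \sigma$, and $\qq \cap \OO_{S'} \in \overline{I}$.

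The main obstacle I expect is securing the linear disjointness $L \cap K^{\OO_{S'}} = K$ uniformly in $S'$. Since $K^{\OO_{S'}}/K$ is everywhere unramified for every admissible $S'$, it suffices to force all three quadratic subfields of $L/K$ to be ramified somewhere, and this is precisely what the coprime-divisor construction delivers while simultaneously giving (2). The choice of $a$ and $b$ therefore has to be synchronized so that ramification at primes of $S$ occurs for each of the three quadratic subextensions, which is the real content of the lemma.
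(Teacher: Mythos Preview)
Your proposal is correct and follows essentially the same strategy as the paper: construct $a,b$ via weak approximation to control their divisors so that (1), (2), (4) hold, then deduce (3) from linear disjointness of $L$ and $K^{\OO_{S'}}$ (established via ramification of the three quadratic subfields) together with Chebotarev. The paper's construction is marginally simpler---it assigns all primes of $S$ odd valuation in $a$ and uses a single auxiliary prime $\pp_1\notin S$ for $b$, rather than partitioning $S$---but the argument is otherwise the same; note only that your phrase ``ramified at some prime in $S$'' should read ``ramified somewhere'' when auxiliary primes are in play, as you correctly state in your final paragraph.
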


\begin{proof}
Set $A=\OO_{S'}$. Let $\pp_1\in S_K\setminus S$ and choose $a\in K^{\times}$ with $v_{\pp_1}(a)=0$ and $v_{\pp}(a)=1$ for $\pp\in S$; this is possible by weak approximation. 

Now we choose $b\in K^{\times}$ with $v_{\pp_1}(b)=1$ and $v_{\pp}(b)=0$ for any $\pp$ in the support of $(a)$. Then $(a)$ and $(b)$ have disjoint support. They have distinct images in $K^{\times}/K^{\times2}$ as the primes at which $a$ has odd valuation differ from the primes where $b$ has odd valuation. Thus we have established (1) and (4). Since the primes in $S$ ramify in $K(\sqrt{a})$, they also ramify in $L$ and hence will be contained in any admissible modulus for $L/K$. 

We are left with showing that (3) holds. Recall that $K^A/K$ is the maximal abelian unramified extension of $K$ in which every prime of $S'$ splits completely and for which $\Gal(K^A/K)\simeq \Cl(A)$. We claim that $K^A$ and $L$ are linearly disjoint; this will follow if we can show that none of $\sqrt{a},$ $\sqrt{b}$, and $\sqrt{ab}$ are in $K^A$. Since the primes in $S$ ramify in $K(\sqrt{a})$, $\sqrt{a}\not\in K^A$. As $\pp_1$ ramifies in $K(\sqrt{b})$ and the primes of $S$ along with $\pp_1$ all ramify in $K(\sqrt{ab})$, we have that $\sqrt{b}$, and $\sqrt{ab}$ are not in $K^A$, and the claim follows. 

We conclude 
\[
\Gal(LK^A/K)\cong\Gal(L/K)\times\Gal(K^A/K)\cong \{(\pm1,\pm1)\}\times\Cl(A).
\]

We then apply the Chebotarev density theorem to find a prime $\qq$
such that \linebreak $(\qq,LK^A/K)=(\sigma,\overline{I})\in
\Gal(L/K)\times\Gal(K^A/K)$. This yields part (c): 
\[ \sigma=(\qq,LK^A/K)|_L=(\qq,L/K)=\psi_{L/K}(\qq) {\text{ and }} \overline{I}=(\qq,LK^A/K)|_{K^A}=(\qq,K^A/K).\]

\end{proof}
For the rest of this section, fix $a,b\in K^{\times}$ as in Lemma
\ref{3.19} along with an admissible modulus $\mm$ of $K$ for
$L:=K(\sqrt{a},\sqrt{b})$.

For the rest of this section, fix $a,b\in K^{\times}$ as in Lemma
\ref{3.19} and set $L:=K(\sqrt{a},\sqrt{b})$. Next, we must fix two
additional elements of $K^{\times}$. To construct them, we need
Theorem~3.7 of~\cite{Park}, which we restate below for
convenience. This theorem lets us construct an element of $K^{\times}$
with prescribed Hilbert symbols against finitely many elements of
$K^{\times}$. 

\begin{thm}\cite[Theorem 3.7]{Park}\label{prescribesymbols}
Let $K$ be a global field, $\ch(K)\not=2$. Let $\Sigma$ denote the set
of primes of $K$, 
and let $\Lambda$ be a finite set of indices. Let $(a_i)_{i\in
  \Lambda}$ be a finite sequence of elements of $K^*$ and suppose that
$(\varepsilon_{i,\pp})_{i\in \Lambda, \pp\in \Sigma}$ is a family of
elements of $\{1,-1\}$. There exists $x\in K^*$ satisfying
$(a_i,x)_{\pp}=\varepsilon_{i,\pp}$ for all $i\in \Lambda$
and $\pp\in \Sigma$ if and only if the following conditions hold:
\begin{enumerate}
\item All but finitely many of the $\varepsilon_{i,\pp}$ are equal to
  1.
\item For all $i\in \Lambda$, we have $\prod_{\pp\in \Sigma}
  \varepsilon_{i,\pp}=1$. 
\item For every $\pp\in \Sigma$, there exists $x_{\pp}\in K^*$ such
  that $(a_i,x_{\pp})_{\pp}=\varepsilon_{i,\pp}$. 
\end{enumerate}
\end{thm}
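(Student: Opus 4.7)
The plan is to prove necessity by direct inspection and sufficiency by combining weak approximation with Chebotarev plus Artin reciprocity applied to the multi-quadratic extension $L := K(\sqrt{a_i} : i \in \Lambda)$.

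For necessity, condition (1) follows immediately from formula~(\ref{hsymb}): for each fixed $a_i$, one has $(a_i, x)_\pp = 1$ outside the finite set consisting of the supports of $(a_i)$ and $(x)$, together with the primes above $2$ if $\ch(K) = 0$. Condition (2) is Hilbert reciprocity $\prod_\pp (a_i, x)_\pp = 1$, equivalent to the vanishing of the sum of local invariants of $H_{a_i, x}$ in the Brauer group of $K$. Condition (3) is trivial upon taking $x_\pp := x$.

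For sufficiency, let $S$ be a finite set of primes containing every $\pp$ with some $\varepsilon_{i,\pp} \ne 1$, the supports of all $(a_i)$, and, if $\ch(K) = 0$, all primes above $2$. By condition (3), for each $\pp \in S$ pick a local $x_\pp \in K_\pp^\times$ realizing the prescribed symbols; since each Hilbert symbol factors through a compact open subgroup of $K_\pp^\times$, weak approximation then produces a single $x_0 \in K^\times$ with $(a_i, x_0)_\pp = \varepsilon_{i,\pp}$ at every $\pp \in S$. The element $x_0$ may introduce unwanted values $\delta_{i,\pp} := (a_i, x_0)_\pp \ne 1$ at a finite set $T \subseteq \Sigma \setminus S$ (arising from primes where $v_\pp(x_0)$ is odd), so the goal becomes producing $y \in K^\times$ with trivial Hilbert symbols on $S$ and matching $\delta_{i,\pp}$ on $T$; then $x := x_0 / y$ is the desired element.

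To produce $y$, observe that for $\pp \notin S$ formula~(\ref{hsymb}) reduces $(a_i, y)_\pp$ to a function of $\psi_{L/K}(\pp)$ and the parity of $v_\pp(y)$, so one seeks a $y$ whose divisor, modulo an admissible modulus $\mm$ divisible by the primes in $S \cup T$, has prescribed image under the Artin map. Conditions (1) and (2), combined with the fact that $x_0$ itself satisfies Hilbert reciprocity, guarantee that the required Galois element satisfies the compatibility demanded by Artin reciprocity, hence lies in the image of $\psi_{L/K}$ restricted to $I_\mm$. The existence of $y$ can then be extracted exactly in the style of Lemma~\ref{3.19} and Theorem~\ref{chebA} of the present paper: Chebotarev yields a prime $\qq$ with the prescribed Artin image and trivial class in the appropriate ray class group, and $y$ is taken to be a generator of a suitable principal power of $\qq$. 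The main obstacle is arranging all these local prescriptions simultaneously, namely trivial symbols on $S$, prescribed symbols on $T$, principality modulo $\mm$, and correct Frobenius at $\qq$, in a single Chebotarev and reciprocity application; this is the technical heart of the argument and is precisely where conditions (1), (2), (3) are each used in an essential way.
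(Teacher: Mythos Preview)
The paper does not give its own proof of this statement; it is quoted as \cite[Theorem~3.7]{Park} and used as a black box. So there is nothing in the paper to compare against, and I assess your sketch on its merits.

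Your treatment of necessity is correct. For sufficiency the overall plan (weak approximation, Chebotarev, reciprocity) is the standard one, but the correction step as written does not work. You produce $x_0$ matching the prescribed symbols on $S$, collect the unwanted symbols into a finite set $T\subseteq\Sigma\setminus S$, and then propose to cancel them with $y$ taken to be ``a generator of a suitable principal power'' of a single Chebotarev prime $\qq$. But such a $y$ has $v_\pp(y)=0$ for every $\pp\in T$, and since each $a_i$ is a $\pp$-adic unit for $\pp\notin S$, formula~(\ref{hsymb}) gives $(a_i,y)_\pp=1$ throughout $T$; hence $x_0/y$ has exactly the same symbols as $x_0$ on $T$ and nothing has been corrected. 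Prescribing the Artin image of the divisor of $y$ controls the symbols $(a_i,y)_\qq$ at the \emph{new} prime $\qq$, not at the old primes of $T$.

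A correct version arranges instead that the odd part of the divisor of the final $x$ outside $S$ is supported at a single prime $\qq$ with trivial Frobenius in $L=K(\sqrt{a_i}:i\in\Lambda)$. Equivalently, the correction $y$ must have odd valuation at every prime of $T$, be a local square at every prime of $S$, and acquire at most one further prime $\qq$ of the stated type. Producing such a $y$ is a genuine ray-class/Chebotarev problem: one must hit the class of $\prod_{\pp\in T}\pp$ (modulo squares and an $S$-modulus) by a prime of prescribed Frobenius, in the spirit of Theorem~\ref{chebA}. Once this is done, condition~(2) together with Hilbert reciprocity forces the single remaining symbol at $\qq$ to be $+1$, exactly as your closing sentence anticipates.
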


\begin{lem}\label{choosec,d}
There exist $c,d\in K^{\times}$ such that $\Delta_{a,c}=\PP(a)$ or
$\Delta_{a,c}=\PP(a)\cup \{\pp_a\}$ where $\pp_a$ is coprime to $(a)$
and $(b)$, and $\Delta_{b,d} =
\PP(b)$ or $\Delta_{b,d} \cup \{\pp_b\}$ where $\pp_b$ is coprime to
$(a)$, $(b)$, and $\pp_a$. 
\end{lem}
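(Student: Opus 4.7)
The plan is to invoke Theorem~\ref{prescribesymbols} to produce $c \in K^\times$ with prescribed Hilbert symbols against $a$, namely $\varepsilon_\pp := (a,c)_\pp = -1$ exactly on $\PP(a)$ (possibly augmented by one auxiliary prime $\pp_a$). Since $\Delta_{a,c} = \{\pp : (a,c)_\pp = -1\}$, this immediately yields the desired description of $\Delta_{a,c}$. The same strategy, applied to $b$ in place of $a$, will produce $d$.

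I would then verify conditions (1)--(3) of Theorem~\ref{prescribesymbols}. Condition (1) is automatic: $\PP(a) \subseteq \mathrm{supp}(a)$ is finite. Condition (3) holds everywhere: for $\pp \in \PP(a)$, $v_\pp(a)$ is odd, so $a \notin K_\pp^{\times 2}$ and the pairing $(a,\cdot)_\pp : K_\pp^\times/K_\pp^{\times 2} \to \{\pm 1\}$ is surjective, so $\varepsilon_\pp = -1$ is realizable locally; for $\pp \notin \PP(a)$, take $x_\pp = 1$. The only nontrivial condition is (2), Hilbert reciprocity, which forces $|\PP(a)|$ to be even. If $|\PP(a)|$ is already even, the theorem delivers $c$ with $\Delta_{a,c} = \PP(a)$, and we are done.

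If $|\PP(a)|$ is odd, I need to introduce one auxiliary prime $\pp_a$, coprime to $(a)$ and $(b)$, at which $(a, c)_{\pp_a}$ can be prescribed to $-1$; this requires $a \notin K_{\pp_a}^{\times 2}$. Note that $a$ is not a square in $K$ by Lemma~\ref{3.19} (since primes in $S$ ramify in $K(\sqrt{a})$), so $K(\sqrt{a})/K$ is a nontrivial quadratic extension. By Chebotarev, the set of primes of $K$ inert in $K(\sqrt{a})/K$ has density $1/2$, so I can choose one such $\pp_a$ outside the finite set $\mathrm{supp}(a)\cup \mathrm{supp}(b)$. Then $v_{\pp_a}(a)=0$ and $a\bmod \pp_a$ is a non-square in $\FF_{\pp_a}$, so $a \notin K_{\pp_a}^{\times 2}$, and local solvability at $\pp_a$ is assured. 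Augmenting $\varepsilon_\pp = -1$ to include $\pp_a$ makes the total count even, (2) is satisfied, and the theorem produces the required $c$ with $\Delta_{a,c} = \PP(a)\cup\{\pp_a\}$.

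The construction of $d$ proceeds identically, replacing $a$ by $b$ throughout. If an auxiliary prime $\pp_b$ is needed, I pick it inert in $K(\sqrt{b})/K$ and outside $\mathrm{supp}(a)\cup\mathrm{supp}(b)\cup\{\pp_a\}$, which is again possible by Chebotarev since only finitely many primes must be excluded. There is no serious obstacle here: the argument is essentially parity bookkeeping plus one application of Chebotarev to locate each auxiliary prime with the correct Frobenius in $\Gal(K(\sqrt{a})/K)$ (respectively $\Gal(K(\sqrt{b})/K)$).
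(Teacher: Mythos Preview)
Your proposal is correct and follows essentially the same approach as the paper: both invoke Theorem~\ref{prescribesymbols} with the target set $\PP(a)$ (or $\PP(a)\cup\{\pp_a\}$ to fix parity), verify conditions (1)--(3), and produce the auxiliary prime $\pp_a$ by requiring $a$ to be a non-square modulo $\pp_a$. The only cosmetic difference is that the paper explicitly names local witnesses (e.g., $c_{\pp_a}\in\pp_a\setminus\pp_a^2$) whereas you appeal to nondegeneracy of the Hilbert pairing, and the paper simply asserts the existence of $\pp_a$ with $\bigl(\tfrac{a}{\pp_a}\bigr)=-1$ rather than invoking Chebotarev by name.
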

\begin{proof}
  Assume $\PP(a)$ has even cardinality. Then by
  Theorem~\ref{prescribesymbols}, there exists $c$ in $K$ such that
  $(a,c)_{\pp}=-1$ for each $\pp\in \PP(a)$, and $(a,c)_{\pp}=1$ if
  $\pp$ is not in $\PP(a)$. Indeed, there
  are an even number primes in $\PP(a)$, and for a local element we can
  take any $c_{\pp} \in K^{\times}$ such that
  $\left(\frac{c_{\pp}}{\pp}\right)=-1$. Then $(a,c_{\pp})_{\pp}=-1$
  since $v_{\pp}(a)$ is odd. The proof in the case that $\PP(a)$ has
  odd cardinality is the same by choosing $\pp_a$ coprime to $(a)$ and
  $(b)$ such that $\left(\frac{a}{\pp_a}\right)=-1$ and considering
  the set $\PP(a)\cup \pp_a$. The local element for $\pp_a$ can be
  taken to be any element $c_{\pp_a}\in \pp_a\setminus\pp_a^2$. The
  proof for the existence of $d$ is a similar argument.
\end{proof}

We now also fix $c,d\in K^{\times}$ as in Lemma~\ref{choosec,d}, along
with a modulus $\mm$ of $K$ for $L$ such that $\mm$ contains all the
primes dividing $(a),(b),(c)$, and $(d)$ and any other primes $\pp$
such that $(a,c)_{\pp}=-1$ or $(b,d)_{\pp}=-1$. 

\begin{cor}\label{cor3.20}
Let $p\in K^{\times}$ such that $(p)$ and $\mm$ are coprime. We have
\begin{align*}
\PP^{(-1,-1)}(p)&=\Delta_{a,p}\cap\Delta_{b,p}, \\
\PP^{(-1,1)}(p)&=\Delta_{a,p}\cap \Delta_{ab,p}\cap \Delta_{a,cp}, \\
\PP^{(1,-1)}(p)&=\Delta_{b,p}\cap\Delta_{ab,p}\cap \Delta_{b,dp}.
\end{align*}
\end{cor}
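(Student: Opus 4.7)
The plan is to combine Lemma \ref{uptomodulus}, which already identifies each left-hand side with the corresponding intersection of ramification sets \emph{up to primes dividing $\mm$}, with a careful case analysis at the finitely many primes $\pp\mid\mm$. Since $\PP^{(i,j)}(p)\subseteq I_\mm$ by definition, each left-hand side contains no primes dividing $\mm$; hence for each of the three equations it suffices to show that no prime $\pp\mid\mm$ appears in the right-hand side. Throughout, I will use that $(p)$ and $\mm$ are coprime, so $v_\pp(p)=0$ whenever $\pp\mid\mm$, and then apply the explicit Hilbert symbol formula (\ref{hsymb}).

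For the first equation, suppose $\pp\mid\mm$ lay in $\Delta_{a,p}\cap\Delta_{b,p}$. Plugging $v_\pp(p)=0$ into (\ref{hsymb}) simplifies $(a,p)_\pp$ to $[p^{-v_\pp(a)}]^{(|\FF_\pp|-1)/2}$, which equals $-1$ exactly when $v_\pp(a)$ is odd and $p$ reduces to a non-square in $\FF_\pp$. So $(a,p)_\pp=-1$ forces $v_\pp(a)$ odd, and symmetrically $(b,p)_\pp=-1$ forces $v_\pp(b)$ odd. But Lemma \ref{3.19}(4) ensures the supports of $(a)$ and $(b)$ are disjoint, so no single $\pp$ has both $v_\pp(a)$ and $v_\pp(b)$ odd; this contradiction gives the first equality.

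For the second equation, I would first argue that for $\pp\nmid\mm$ the extra factor $\Delta_{a,cp}$ adds nothing: by the choice of $\mm$, $(a,c)_\pp=1$, so bilinearity gives $(a,cp)_\pp=(a,c)_\pp(a,p)_\pp=(a,p)_\pp$, i.e., $\Delta_{a,cp}$ agrees with $\Delta_{a,p}$ at such primes. Thus the right-hand side, restricted to $\pp\nmid\mm$, equals $\Delta_{a,p}\cap\Delta_{ab,p}$, which matches $\PP^{(-1,1)}(p)$ by Lemma \ref{uptomodulus}. For $\pp\mid\mm$, suppose $\pp$ were in the full right-hand side. By the same computation as in the first equation, $(a,p)_\pp=-1$ forces $v_\pp(a)$ odd, i.e., $\pp\in\PP(a)$. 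By the choice of $c$ in Lemma \ref{choosec,d}, $\PP(a)\subseteq\Delta_{a,c}$, so $(a,c)_\pp=-1$; bilinearity then yields $(a,cp)_\pp=(a,c)_\pp(a,p)_\pp=1$, contradicting $\pp\in\Delta_{a,cp}$. The third equation is handled identically with $b,d$ in place of $a,c$.

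The main obstacle is the prime-by-prime bookkeeping at $\pp\mid\mm$: the extra factor $\Delta_{a,cp}$ (respectively $\Delta_{b,dp}$) was engineered to cancel off precisely the potentially problematic primes coming from the support of $(a)$ (respectively $(b)$), and this cancellation rests on the very particular Hilbert-symbol prescription built into the choice of $c$ and $d$ in Lemma \ref{choosec,d}, together with the inclusion $\PP(a)\subseteq\Delta_{a,c}$ (respectively $\PP(b)\subseteq\Delta_{b,d}$).
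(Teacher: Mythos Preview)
Your proposal is correct and follows essentially the same approach as the paper's proof: both use Lemma~\ref{uptomodulus} to reduce to showing that no prime $\pp\mid\mm$ lies in the right-hand side, then use the explicit Hilbert symbol formula together with the disjoint-support condition on $(a),(b)$ (first equality) and the bilinearity $(a,cp)_\pp=(a,c)_\pp(a,p)_\pp$ combined with $\PP(a)\subseteq\Delta_{a,c}$ (second and third equalities) to rule out such primes. Your treatment of the $\pp\mid\mm$ case is in fact slightly cleaner than the paper's, since you observe directly that $\pp\in\Delta_{a,p}$ with $v_\pp(p)=0$ already forces $\pp\in\PP(a)$, making the paper's separate case $\pp\mid\mm$, $\pp\notin\PP(a)$ unnecessary.
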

\begin{proof}
For the first equality, see \cite[Corollary 3.20]{Park}. We will prove
the second equality. We have 
\[
\Delta_{a,p} \cap \Delta_{ab,p} = \PP^{(-1,1)}(p)\cup \left\{\pp\in
\PP(a): \left(\frac{a}{\pp}\right) = -1\right\}, 
\]
by Lemma~\ref{uptomodulus}. Also, $\PP^{(-1,1)}(p)\subseteq
\Delta_{a,cp}$, since $(a,c)_{\pp}=1$ for any $\pp$ coprime to
$\mm$. Thus we need to compute the intersection $\Delta_{a,cp}\cap
\{\pp|\mm\}$. 
Suppose $\pp\in \PP(a)$. Then $(a,c)_{\pp}=-1$ by
our choice of $c$, so
$(a,cp)_{\pp}=-1$ if and only if $\left(\frac{p}{\pp}\right)=1$. If
$\pp|\mm$ but $\pp\not\in \PP(a)$, then $(a,p)_{\pp}=1$ by Equation~\ref{hsymb}, so
$(a,cp)_{\pp}=-1$ if and only if $(a,c)_{\pp}=-1$. In any case, we
  conclude that 
\[
\Delta_{a,p}\cap\Delta_{a,cp} \cap \{\pp|\mm\} = \emptyset,
\]
because whenever $(a,p)_{\pp}=-1$ for $\pp|\mm$, we have
$(a,c)_{\pp}=-1$ as well. Thus the second equality holds. The proof of the third equality goes
the same way by calculating 
\[
\Delta_{b,dp}\cap\{\pp|\mm\}. 
\]
\end{proof}
\begin{defn}\label{semilocalringsdef}
For $p,q\in K^{\times}$, let
\begin{align*}
R_p^{(-1,-1)} &= \bigcap_{\pp\in \Delta_{a,p}\cap \Delta_{b,p}} \OO_{\pp},\\ 
R_p^{(1,-1)} &= \bigcap_{\pp\in \Delta_{ab,p}\cap \Delta_{b,p}\cap\Delta_{a,cp}} \OO_{\pp}, \\
R_p^{(-1,1)}&= \bigcap_{\pp\in \Delta_{a,p}\cap \Delta_{ab,p}\cap\Delta_{b,dp}} \OO_{\pp}, \\
R_{p,q}^{(1,1)} &= \bigcap_{\pp\in \Delta_{ap,q}\cap \Delta_{bp,q}} \OO_{\pp}.
\end{align*}
\end{defn}


Given a finite set of primes $S\subset S_K$, in Section \ref{univdef}
we will express the $S$-integers $\OO_S$ in terms of the rings
$R_p^{\sigma}$ for $\sigma=(-1,-1),(-1,1),$ and $(1,-1)$ and
$R_{p,q}^{(1,1)}$ defined above.

\begin{defn} For each $\sigma \in \Gal(L/K)$, let 
\[
\Phi_{\sigma}=\{p\in K^{\times}: (p)\in I_{\mm}, \psi_{L/K}((p))=\sigma,\text{ and }\PP(p)\subseteq \PP^{(1,1)}\cup \PP^{\sigma}\}.
\]
\end{defn}
\begin{lem}\label{not1,1}
$\left.\right.$
\begin{enumerate}
\item For each $\sigma\in \Gal(L/K)$, $\Phi_{\sigma}$ is diophantine over  $K$. 
\item For any $p\in \Phi_{\sigma}$ and $\sigma\in
  \Gal(L/K)$ with $\sigma\not=(1,1)$, we have that
  $\PP^{(\sigma)}(p)$ is nonempty. Furthermore, the Jacobson radical
  of $R_p^{\sigma}$, denoted $J(R_p^{\sigma})$, is diophantine over
  $K$.
\item Let $\sigma\in \Gal(L/K)$ with
  $\sigma\not=(1,1)$, and let $\pp_0\nmid \mm$ be a prime of $K$
  satisfying $\psi_{L/K}(\pp_0)=\sigma$. Then there is an element
  $p\in \Phi_{\sigma}$ such that $\pp_0\in \PP^{\sigma}(p)$. In fact,
  $p$ can be chosen so that $\PP^{\sigma}(p)=\{\pp_0\}$.
\end{enumerate}
\end{lem}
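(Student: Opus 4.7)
The plan is to prove (1), (2), (3) in sequence, using Corollary \ref{cor3.20} to translate sets $\PP^\tau(p)$ into intersections of ramification sets $\Delta_{x,p}$, and exploiting the diophantine descriptions of $T_{x,y}$ (Lemma \ref{facts}) and $J_{x,y}$ (Proposition \ref{Jabdio}).

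For (1), the defining conditions of $\Phi_\sigma$ are $(p)\in I_\mm$, $\psi_{L/K}((p))=\sigma$, and $\PP(p)\subseteq \PP^{(1,1)}\cup \PP^\sigma$. The first is a unit condition at the finitely many primes of $\mm$ and is diophantine. The support condition is equivalent to the vanishing of $\PP^\tau(p)$ for each $\tau\neq(1,1),\sigma$; by Corollary \ref{cor3.20}, each such $\PP^\tau(p)$ equals a triple intersection $\Delta_{x,p}\cap\Delta_{y,p}\cap\Delta_{z,p}$, and its emptiness is equivalent to $1\in J_{x,p}+J_{y,p}+J_{z,p}$, using the weak-approximation identity $\bigl(\bigcap_{\pp\in S_1}\pp\OO_\pp\bigr)+\bigl(\bigcap_{\pp\in S_2}\pp\OO_\pp\bigr)=\bigcap_{\pp\in S_1\cap S_2}\pp\OO_\pp$ and noting that $\PP^\tau(p)\subseteq\PP(p)$ lies automatically in the supports defining each $J$. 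The Artin symbol condition $\psi_{L/K}((p))=\sigma$ decomposes coordinatewise: by the Hasse norm theorem applied to each of $K(\sqrt a)/K$ and $K(\sqrt b)/K$, each coordinate is the condition that $p$ is, or is not, a global norm from the corresponding quadratic extension, which is diophantine via $p=x^2-ay^2$ for the norm case and via Theorem \ref{nonnorm} for the non-norm case.

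For (2), assume $p\in\Phi_\sigma$ with $\sigma\neq(1,1)$. Homomorphicity of $\psi_{L/K}$ on $I_\mm$ together with $\PP(p)\subseteq\PP^{(1,1)}\cup\PP^\sigma$ yields $\sigma=\psi_{L/K}((p))=\sigma^{|\PP^\sigma(p)|\bmod 2}$, forcing $|\PP^\sigma(p)|$ odd and in particular $\PP^\sigma(p)\neq\emptyset$. For the diophantine description of $J(R_p^\sigma)$, Definition \ref{semilocalringsdef} and Corollary \ref{cor3.20} identify the primes cutting out $R_p^\sigma$ with $\PP^\sigma(p)\subseteq\PP(p)$, so $J(R_p^\sigma)=\bigcap_{\pp\in\PP^\sigma(p)}\pp\OO_\pp$ can be written as a two- or three-term sum $J_{x,p}+J_{y,p}(+J_{z,p})$ via the identity above, with indices $x,y,z\in\{a,b,ab,cp,dp\}$ read off from Definition \ref{semilocalringsdef}; each summand is diophantine by Proposition \ref{Jabdio}.

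For (3), set $S':=\{\pp:\pp\mid\mm\}\cup\{\pp_0\}$ and $A':=\OO_{S'}$. The ray class group $\Cl_\mm(A')$ is finite (Proposition \ref{Clmafinite}) with associated ray class field $K_\mm^{A'}$. Chebotarev applied to the abelian extension $LK_\mm^{A'}/K$ should provide a prime $\qq$ of $K$, coprime to $\mm\pp_0$, such that $\psi_{L/K}(\qq)=(1,1)$ and $[\qq\cap A']=[\pp_0\cap A']^{-1}$ in $\Cl_\mm(A')$. Then $\pp_0\qq\cap A'$ is trivial in $\Cl_\mm(A')$, so $\pp_0\qq\cap A'=pA'$ for some $p\in K_{\mm,1}$; since $v_\pp(p)=0$ for $\pp\mid\mm$, this forces $(p)=\pp_0\qq$ as fractional ideals of $K$, yielding $\PP(p)=\{\pp_0,\qq\}$, $\psi_{L/K}((p))=\sigma\cdot(1,1)=\sigma$, and $\PP^\sigma(p)=\{\pp_0\}$. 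The hardest step will be the joint Chebotarev: realizing the pair $((1,1),[\pp_0]^{-1})$ as a Frobenius in $\Gal(LK_\mm^{A'}/K)$ requires $\sigma$ to act trivially on $L\cap K_\mm^{A'}$, which amounts to extending the linear-disjointness argument of Lemma \ref{3.19} from the Hilbert-class-like extension $K^{A'}$ to the ray class field $K_\mm^{A'}$; meeting this condition may necessitate further care in the choice of $S'$ or in the Hilbert-symbol setup of $c,d$ in Lemma \ref{choosec,d}.
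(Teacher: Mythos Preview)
Your argument for part~(2) is correct and essentially matches the paper's. The problems lie in parts~(1) and~(3).

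In part~(1), your treatment of the condition $\psi_{L/K}((p))=\sigma$ does not work. The first coordinate $\psi_{K(\sqrt a)/K}((p))$ is \emph{not} equivalent to $p$ being a global norm from $K(\sqrt a)$: for $(p)\in I_\mm$ one has, using $v_\pp(a)=0$ for $\pp\nmid\mm$ and Hilbert reciprocity,
\[
\psi_{K(\sqrt a)/K}((p))=\prod_{\pp\nmid\mm}(a,p)_\pp=\prod_{\pp\mid\mm}(a,p)_\pp,
\]
and this product can equal $1$ while some individual $(a,p)_\pp$ with $\pp\mid\mm$ equals $-1$ (so $p$ is not a norm). Moreover, even if the equivalence held, invoking Theorem~\ref{nonnorm} here would be circular: the proof of Theorem~\ref{nonnorm} in Section~\ref{section:non-norms} relies on Lemma~\ref{not1,1}. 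The paper avoids all of this by observing that $K_\mm/K_{\mm,1}$ is finite, so $\{p\in K^\times:(p)\in I_\mm,\ \psi_{L/K}((p))=\sigma\}$ is a finite union of cosets of the diophantine set $K_{\mm,1}$.

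In part~(3), your choice $S'=\{\pp:\pp\mid\mm\}\cup\{\pp_0\}$ breaks the construction twice over. First, $\Cl_\mm(A')$ is only set up in Section~\ref{ffcft} for $\mm$ coprime to the ``infinite'' set $S'$, which fails here. Second, since $\pp_0\in S'$, the ideal $\pp_0\cap A'$ is the unit ideal, so the class $[\pp_0\cap A']^{-1}$ carries no information about $\pp_0$ and you cannot force $v_{\pp_0}(p)$ to be odd. The paper's route is both simpler and already prepared by Lemma~\ref{3.19}: fix a single prime $\qq'\in\PP^{(1,1)}$, set $S'=\{\qq'\}$, and apply Lemma~\ref{3.19}(3) to the \emph{ordinary} class group $\Cl(\OO_{S'})$ to get a prime $\qq$ with $\psi_{L/K}(\qq)=(1,1)$ and $[\qq\cap\OO_{S'}]=[\pp_0\cap\OO_{S'}]^{-1}$. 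A generator $p\in\OO_{S'}$ of $(\pp_0\cap\OO_{S'})(\qq\cap\OO_{S'})$ then has $\PP(p)\subseteq\{\pp_0,\qq,\qq'\}$, $\psi_{L/K}((p))=\sigma$, and $\PP^\sigma(p)=\{\pp_0\}$. The linear disjointness of $L$ and $K^{\OO_{S'}}$ needed for the underlying Chebotarev step was arranged once and for all in the proof of Lemma~\ref{3.19}, so the difficulty you anticipate does not arise.
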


\begin{proof}
 To prove (1), we first establish that $K_{\mm,1}$ is
  diophantine over $K$: this follows from the fact that $K_{\mm,1}$ is defined
  by the finitely many local conditions $v_{\pp}(a-1)\geq
  \mm(\pp)$ for $\pp|\mm$ and that the local rings $\OO_{\pp}$ are
  diophantine over $K$ for any prime $\pp$ by \cite[Lemma 3.22]{Shl94}. 
  Because a class of principal ideals in $C_{\mm}$ lies in $C^0_{\mm}$ which is
  finite, we see there are only finitely many inequivalent classes of
  the form $(p)P_{\mm}$ for $p\in K^{\times}$. Thus $\{p\in
  K^{\times}: (p)\in I_{\mm}\}$ is diophantine over $K$ as it is a finite union
  of translates of $K_{\mm}$.  Observe that $\PP(p)\subseteq
  \PP^{(1,1)}\cup \PP^{\sigma}$ is equivalent to one of the following:
  \begin{itemize}
  \item $\PP^{(1,-1)}(p)=\PP^{(-1,1)}(p)=\emptyset$, if $\sigma=(-1,-1)$;
  \item $\PP^{(-1,-1)}(p)=\PP^{(-1,1)}(p)=\emptyset$, if $\sigma=(1,-1)$;
  \item $\PP^{(-1,-1)}(p)=\PP^{(1,-1)}(p)=\emptyset$, if $\sigma=(-1,1)$;
  \item $\PP^{(1,-1)}(p)=\PP^{(-1,1)}(p)=\PP^{(-1,-1)}(p)=\emptyset$, if $\sigma=(1,1)$.
  \end{itemize}
  
  For $\tau\not=(1,1)$, we
  have that $\PP^{\tau}(p)=\emptyset$ if and only if $p\in
  K^{\times2}\cdot (R_p^{\tau})^{\times}$, and this is a diophantine
  subset of $K$. Thus for any $\sigma$, $\Phi_{\sigma}$ is the
  intersection of finitely many diophantine sets and is thus
  diophantine over $K$.


Now we prove (2). Suppose $\sigma\not=(1,1)$ and that $p\in \Phi_{\sigma}$. Then because
$\psi_{L/K}((p))=\sigma$ and $\PP(p)\subseteq \PP^{(1,1)}\cup
\PP^{\sigma}$, there must be some prime $\pp \in \PP^{\sigma}(p)$,
because otherwise we would have $\psi((p))=(1,1)$. If
$\sigma=(-1,-1)$, then we observe that
$J(R_p^{(-1,-1)})=J_{a,p}+J_{b,p}$ is diophantine over $K$ by Definition~\ref{Jabdef}
Lemma~\ref{Jabdio}. Similarly, 
$J(R_p^{(-1,1)}) = J_{a,p}+J_{ab,p}+J_{a,cp}$ and
$J(R_p^{(1,-1)})=J_{b,p}+J_{ab,p}+J_{b,dp}$ are diophantine over $K$.

We move on to (3). Suppose that
$\sigma\in \Gal(K(\sqrt{a},\sqrt{b})/K)$ with $\sigma\not=(1,1)$ and
$\pp_0\nmid \mm$ is a prime of $K$ satisfying
$\psi_{L/K}(\pp_0)=\sigma$. Let $\qq'\nmid \mm$ be a prime of $K$ with
$\psi_{L/K}(\qq')=(1,1)$ and let $S'=\{\qq'\}$. By Lemma \ref{3.19} we can
choose a prime $\qq$ of $K$ such that it represents the class of
$(\pp_0\cap \OO_{S'})^{-1}$ in $\Cl(\OO_{S'})$ and $\psi_{L/K}(\qq)=(1,1)$. Then there is
an element $p\in \OO_{S'}$ such that $(\pp_0\cap\OO_{S'})(\qq\cap\OO_{S'})=p\OO_{S'}$.

We claim that $p\in \Phi_{\sigma}$. Since $v_{\pp}(p)=0$ if $\pp\not=\pp_0,\qq,\qq'$, it follows that $(p)\in I_{\mm}$ and 
\[
\psi_{L/K}((p))=\psi_{L/K}(\pp_0)\psi_{L/K}(\qq)\psi_{L/K}(\qq')=\sigma.
\]
Additionally, the only primes $\pp$ with $v_{\pp}(p)$ odd are $\pp_0$, $\qq$, and possibly $\qq'$. Hence $\PP(p)\subseteq \PP^{(1,1)}\cup\PP^{\sigma}$ and $p\in \Phi_{\sigma}$. Finally, we observe that $\PP^{\sigma}(p)=\{\pp_0\}$. 

\end{proof}

\begin{lem}\label{Aprime}
Let $\pp_0,\qq_0$ be primes of $K$ not dividing $\mm$ with $\psi_{L/K}(\qq_0)=(1,1)$. Let $A:=\OO_{\{\qq_0\}}$. Then there exists infinitely many $q\in K^{\times}$ satisfying
\begin{enumerate}
\item $\psi_{L/K}((q))=(-1,-1)$;
\item $\left(\frac{q}{\pp_0}\right) = -1$;
\item $qA$ is a prime ideal of $A$, so there exists a prime $\qq$ of $K$ such that $\qq\cap A = qA$. 
\end{enumerate}
\end{lem}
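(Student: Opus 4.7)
The plan is to apply Theorem~\ref{chebA} to the ray class group $\Cl_{\mm_1}(A)$ where $\mm_1 := \mm\cdot \pp_0$, by isolating a single class whose primes encode all three required properties. First I would verify that the Artin map descends to a well-defined homomorphism $\Psi\colon \Cl_{\mm_1}(A)\to \Gal(L/K)$ sending $[\fa]$ to $\psi_{L/K}(\fa)$: for $x\in K_{\mm_1,1}$, the $K$-fractional ideal associated to $xA$ differs from $(x)$ by a power of $\qq_0$, so the hypothesis $\psi_{L/K}(\qq_0)=(1,1)$ combined with Artin reciprocity (since $x\in K_{\mm,1}$) gives $\psi_{L/K}(xA)=1$.

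Next, Lemma~\ref{3.19}, applied with $S'=\{\qq_0\}$, the trivial class of $\Cl(A)$, and $\sigma=(-1,-1)$, produces a prime $\qq_1$ with $\qq_1A=\pi_1A$ principal and $\psi_{L/K}(\qq_1)=(-1,-1)$, so $[\qq_1]\in \Cl_{\mm_1}(A)$ maps to the trivial class in $\Cl(A)$ and to $(-1,-1)$ under $\Psi$. Using the $S$-unit theorem (which gives $A^\times=\FF_q^\times$ since $|S_\infty|=1$) together with weak approximation, I would identify the kernel of the projection $\Cl_{\mm_1}(A)\twoheadrightarrow \Cl_{\mm}(A)$ with $\FF_{\pp_0}^\times/\FF_q^\times$, where $[\beta]$ corresponds to $[xA]$ for any $x\in K_{\mm,1}$ with $x\equiv \beta\pmod{\pp_0}$. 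Under this identification, the generators of $\qq A$, for $\qq$ in the class $[\qq_1]+[\beta]$, reduce modulo $\pp_0$ to the coset $\FF_q^\times\cdot\pi_1\beta\subset \FF_{\pp_0}^\times$.

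Finally, I would choose $\beta\in\FF_{\pp_0}^\times$ so that $\pi_1\beta$ is a non-square modulo $\pp_0$ (take $\beta:=\pi_1^{-1}\eta$ for any non-square $\eta$). Theorem~\ref{chebA} then yields infinitely many primes $\qq$ in the class $[\qq_1]+[\beta]$; for each, pick a generator $q$ of $\qq A$ lying in the non-square part of the coset $\FF_q^\times\cdot\pi_1\beta$. Then (3) holds because $qA=\qq A$ is prime, (2) holds by construction, and (1) holds because $\psi_{L/K}((q)) = \psi_{L/K}(\qq)\cdot \psi_{L/K}(\qq_0)^{v_{\qq_0}(q)} = (-1,-1)$. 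The main obstacle is the bookkeeping in the middle paragraph: verifying that shifting the ray class by $[\beta]\in \FF_{\pp_0}^\times/\FF_q^\times$ really does shift the mod-$\pp_0$ residue of a principal generator by $\beta$, up to the constant ambiguity $\FF_q^\times$. Once that compatibility is in hand via the $S$-unit theorem and weak approximation, the infinitude of suitable $q$ is a direct consequence of Theorem~\ref{chebA}.
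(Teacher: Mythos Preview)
Your approach is essentially the same as the paper's: both use Lemma~\ref{3.19} (with $S'=\{\qq_0\}$ and the trivial class of $\Cl(A)$) to land in the right Artin class with a principal $A$-ideal, then adjust the residue at $\pp_0$, then invoke Theorem~\ref{chebA} on $\Cl_{\mm\pp_0}(A)$. The paper packages the adjustment step slightly differently: it works with \emph{elements} via the Chinese Remainder isomorphism $K_{\pp_0\mm}/K_{\pp_0\mm,1}\cong K_{\mm}/K_{\mm,1}\times (A/I_{\pp_0})^{\times}$, picks an $x_2$ with prescribed components, and then takes $q=x_2t$ with $t\in K_{\pp_0\mm,1}$ as the specific generator of the prime ideal produced by Theorem~\ref{chebA}. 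This sidesteps the unit ambiguity you handle at the end.

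One small slip in your write-up: the kernel of $\Cl_{\mm_1}(A)\twoheadrightarrow \Cl_{\mm}(A)$ is $\FF_{\pp_0}^{\times}$, not $\FF_{\pp_0}^{\times}/\FF_q^{\times}$. If $x\in K_{\mm,1}$ and $xA=yA$ with $y\in K_{\mm_1,1}$, then $u:=x/y\in A^{\times}=\FF_q^{\times}$ satisfies $u\equiv 1\pmod{\pp}$ for every $\pp\mid\mm$, forcing $u=1$; so distinct $\beta\in\FF_{\pp_0}^{\times}$ give distinct ray classes. The quotient by $\FF_q^{\times}$ really enters only where you say it does---in the ambiguity of the generator of $\qq A$---and your handling of that (choose $\beta$ so $\pi_1\beta$ is a non-square, then pick a generator with non-square residue) is correct. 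So the argument goes through; just correct the kernel statement.
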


\begin{proof}
  Set
  \[
  K_{\mm}:=\{ \alpha\in K^{\times}: v_{\pp}(\alpha)=0\quad \forall \pp
  | \mm\}.
  \]
  For a prime $\pp\not=\qq_0$ of $K$, let $I_{\pp}:=A\cap\pp$ be the
  associated prime ideal in $A$. Given $x\in K_{\mm}$, we can find $y,z\in A\cap K_{\mm}$ such that $x=y/z$. Then $y$ and $z$ are $\pp$-adic units for each $\pp|\mm$ and thus we can map $x=y/z$ to the image of $y/z$ modulo $\prod_{\pp|\mm}I_{\pp}^{\mm(\pp)}$. The kernel of this map is $K_{\mm,1}$ so there is a well-defined isomorphism
\[
K_{\mm}/ K_{\mm,1} \simeq \left(A\left/\prod_{\pp|\mm}I_{\pp}^{\mm(\pp)}\right.\right)^{\times}.
\]
By the Chinese Remainder Theorem,
\[
K_{\pp_0\mm}/K_{\pp_0\mm,1}\cong K_{\mm}/K_{\mm,1}\times (A/I_{\pp_0})^{\times}. 
\]
The group $K_{\mm}/K_{\mm,1}$ surjects onto the ray classes in $C_{\mm}$ consisting of principal fractional ideals by the map 
\[
xK_{\mm,1} \mapsto (x)P_{\mm}.
\]
Now we apply Lemma \ref{3.19} part (3) to find a prime $\qq'$ of
$K$ in the principal ideal class of $\Cl(A)$ such that
$\psi_{L/K}(\qq')=(-1,-1)$. Then there exists $x_1\in K^{\times}$ such that
$\psi_{L/K}((x_1))=(-1,-1)$ and $x_1A=\qq'\cap A$ is a prime ideal of $A$. Let $s$ be a non-square of
$(A/I_{\pp_0})^{\times}$. By the Chinese Remainder Theorem, the element $(x_1K_{\mm,1},s)$ in $K_{\mm}/K_{\mm,1}\times (A/I_{\pp_0})^{\times}$ corresponds to an element
$x_2K_{\pp_0\mm,1}\in K_{\pp_0\mm}/K_{\pp_0\mm,1}$. By construction, $\left(\frac{x_2}{\pp_0}\right)=-1$. The ideal $x_2A$ need not be a prime ideal of $A$, but  by Theorem \ref{chebA}, we can find infinitely many
prime ideals of $A$ in the class generated by $x_2$ in $\Cl_{\mm\pp_0}(A)$; such a prime ideal is of the form $qA=\qq\cap A$. Both $qA$ and $x_2A$ generate the same class in $\Cl_{\mm\pp_0}(A)$ and hence $q=x_2t$ for some $t\in K_{\mm\pp_0,1}$. Thus $\psi_{L/K}((q))=\psi_{L/K}((x_2))$ and since $q$ and $x_2$, as elements of $A$, are congruent modulo the ideal $I_{\pp_0}$ of $A$,
\[
\left(\frac{q}{\pp_0}\right)=\left(\frac{x_2}{\pp_0}\right)=-1.
\]
 Thus the element $q$ satisfies the three requirements of the lemma.

\end{proof}

We now need more definitions from \cite{Park}.
\begin{defn}
For $\sigma \in \Gal(K(\sqrt{a},\sqrt{b})/K)$, and $S\subseteq S_K$ the fixed set of primes from above, define 
\begin{align*}
\widetilde{\Phi_{\sigma}} &:= K^{\times2}\cdot \Phi_{\sigma}; \\
\Psi_K &:= \Bigg\{ (p,q)\in \widetilde{\Phi_{(1,1)}}\times \widetilde{\Phi_{(-1,-1)}} | \prod_{\pp|\mm} (ap,q)_{\pp}=-1 \\
&\quad\quad\text{ and } p\in a\cdot K^{\times2}\cdot(1+J(R_q^{(-1,-1)}))\Bigg\}. \\
\end{align*}
\end{defn}


\begin{lem}\label{1,1}
$\left. \right.$
\begin{enumerate}
\item The set $\Psi_K$ is diophantine over  $K$. 
\item For $(p,q)\in \psi_{L/K}$, we have $\emptyset\not=\Delta_{ap,q}\cap \Delta_{bp,q}\subseteq I_{\mm}$, and $J(R_{p,q}^{(1,1)})$ is diophantine over  $K$. 
\item For each prime ideal $\pp_0$ satisfying $\pp_0\nmid \mm$ and $\psi_{L/K}(\pp_0)=(1,1)$, there exists $(p,q)\in \Psi_K$ such that $\Delta_{ap,q}\cap \Delta_{bp,q}=\{\pp_0\}$. 
\end{enumerate}
\end{lem}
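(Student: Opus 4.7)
The plan for (1) is to show $\Psi_K$ is a finite intersection of diophantine sets. Each $\widetilde{\Phi_{\sigma}} = K^{\times 2}\cdot\Phi_{\sigma}$ is diophantine by Lemma \ref{not1,1}(1). A local Hilbert-symbol condition $(x,y)_{\pp} = \epsilon$ at a fixed prime is a local-norm condition, diophantine over $K$ by the diophantineness of $\OO_{\pp}$ from \cite[Lemma 3.22]{Shl94}, so the product condition $\prod_{\pp|\mm}(ap,q)_{\pp} = -1$ unfolds into a finite Boolean combination of diophantine conditions. Finally, $J(R_q^{(-1,-1)})$ is diophantine uniformly in $q$ by Lemma \ref{not1,1}(2) (via Proposition \ref{Jabdio}), so $a\cdot K^{\times 2}\cdot(1+J(R_q^{(-1,-1)}))$ is diophantine in $(p,q)$, and intersecting all these conditions yields (1).

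\textbf{Part (2).} Write $p = ar^2(1+j)$ with $j\in J(R_q^{(-1,-1)})$; bimultiplicativity gives $(ap,q)_{\pp} = (1+j,q)_{\pp}$ and $(bp,q)_{\pp} = (ab,q)_{\pp}(1+j,q)_{\pp}$. The key point for containment in $I_{\mm}$ is that the disjoint supports of $(a)$ and $(b)$ from Lemma \ref{3.19}(4), combined with $\mm$ containing those supports and with $v_{\pp}(q)$ even at every $\pp|\mm$, make $(a,q)_{\pp} = (b,q)_{\pp} = -1$ impossible at $\pp|\mm$ via formula (\ref{hsymb}); hence $\Delta_{a,q}\cap\Delta_{b,q}\subseteq I_{\mm}$. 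Since $v_{\pp}(p)$ is also even at $\pp|\mm$ (as $p\in\widetilde{\Phi_{(1,1)}}$), (\ref{hsymb}) yields $(p,q)_{\pp} = 1$ there, so $(ap,q)_{\pp} = (a,q)_{\pp}$ and $(bp,q)_{\pp} = (b,q)_{\pp}$; for $\pp|\mm$ to lie in the intersection would require $\pp\in\Delta_{a,q}\cap\Delta_{b,q}$, impossible. Nonemptyness follows from Hilbert reciprocity: $\prod_{\pp}(ap,q)_{\pp} = 1$ combined with the product condition gives $\prod_{\pp\nmid\mm}(ap,q)_{\pp} = -1$; at $\pp\nmid\mm$, the constraint $\PP(q)\subseteq\PP^{(1,1)}\cup\PP^{(-1,-1)}$ forces $(a,q)_{\pp} = (b,q)_{\pp}$, so any such $\pp$ with $(ap,q)_{\pp} = -1$ also has $(bp,q)_{\pp} = -1$ and lies in the intersection. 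Diophantineness of $J(R_{p,q}^{(1,1)}) = J_{ap,q}+J_{bp,q}$ follows from Proposition \ref{Jabdio}.

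\textbf{Part (3).} Given $\pp_0\nmid\mm$ with $\psi_{L/K}(\pp_0) = (1,1)$, I would apply Lemma \ref{Aprime} (with an auxiliary $\qq_0$) to produce $q$ with $\psi_{L/K}((q)) = (-1,-1)$, $\left(\frac{q}{\pp_0}\right) = -1$, and controlled support, placing $\pp_0$ outside $\Delta_{a,q}\cup\Delta_{b,q}$. Then, using Theorem \ref{prescribesymbols} to prescribe Hilbert symbols, I would construct $j\in J(R_q^{(-1,-1)})$ with $(1+j,q)_{\pp_0} = -1$ and $(1+j,q)_{\pp} = 1$ at every other prime outside $\Delta_{a,q}\cap\Delta_{b,q}$; taking $p = a(1+j)$ up to a square then forces $\Delta_{ap,q}\cap\Delta_{bp,q} = \{\pp_0\}$, with the product condition at $\mm$ following from Hilbert reciprocity applied to $(1+j,q)$.

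\textbf{Main obstacle.} Part (3) is the most delicate: isolating the Hilbert-symbol obstruction at a single prime $\pp_0$ requires coordinating Lemma \ref{Aprime} with Theorem \ref{prescribesymbols} and weak approximation, ensuring that the local conditions at $\pp_0$, at the primes of $\Delta_{a,q}\cap\Delta_{b,q}$, and at the primes of $\mm$ are simultaneously satisfiable with consistent global product constraints. Part (2) is comparatively clean once the disjoint-support trick from Lemma \ref{3.19}(4) and the parity of $v_{\pp}(p)$ at $\pp|\mm$ are exploited.
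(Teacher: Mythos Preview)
Your proposal is correct and tracks the paper's proof closely. Part~(1) is the same strategy; the only difference is that the paper invokes its own Theorem~\ref{nonlocnorm} (a forward reference) to get that $\{(x,y):(x,y)_{\pp}=-1\}$ is diophantine at a fixed prime, whereas you gesture at this via the diophantineness of $\OO_{\pp}$ --- that step is not quite as immediate as you suggest (it is essentially the content of Lemma~\ref{hsodd}), but the claim is true. Part~(2) is where you deviate most usefully: the paper simply cites Park for nonemptiness, while you give a clean self-contained argument from Hilbert reciprocity plus the observation that $\PP(q)\subseteq\PP^{(1,1)}\cup\PP^{(-1,-1)}$ forces $(a,q)_{\pp}=(b,q)_{\pp}$ at every $\pp\nmid\mm$, hence $(ap,q)_{\pp}=(bp,q)_{\pp}$ there; this avoids unpacking the $1+j$ structure and is arguably more transparent. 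The containment argument is the same as the paper's (disjoint supports of $(a),(b)$ and even valuation of $p,q$ at primes of $\mm$). For Part~(3), both you and the paper use Lemma~\ref{Aprime} to produce $q$ and then defer the construction of $p$: the paper cites Park's Lemma~3.25(c) verbatim, while you sketch it via Theorem~\ref{prescribesymbols}. Your sketch is in the right spirit, but note that Theorem~\ref{prescribesymbols} produces an $x$ with prescribed symbols, not directly a $1+j$ with $j\in J(R_q^{(-1,-1)})=\qq\OO_{\qq}$; the actual construction in Park uses weak approximation against a carefully chosen set $E_{\pp}$ of local square-class representatives to simultaneously realize the residue condition at $\qq$ and the symbol conditions elsewhere, and you would need to fill this in rather than appeal to Theorem~\ref{prescribesymbols} alone.
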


\begin{proof}
  By Lemma \ref{not1,1} part (1),
  $\widetilde{\Phi_{(1,1)}}\times \widetilde{\Phi_{(-1,-1)}}$ is
  diophantine over $K$, and by Lemma \ref{not1,1} part (2),
  $J(R_q^{(-1,-1)})$ is diophantine over $K$. By Theorem \ref{nonlocnorm},
  for any prime $\pp$ of $K$,
\[
\{(x,y):(x,y)_{\pp}=-1\}\subseteq K^{\times}\times K^{\times}
\] 
is diophantine over $K$. Here, $(\cdot,\cdot)_{\pp}$ denotes the Hilbert symbol of $K$ at $\pp$. Since only finitely many $\pp$ divide $\mm$, the set $\Psi_K$ is the intersection of finitely many sets diophantine over $K$. 

For (2), the proof in  \cite{Park} of Lemma 3.25 part (2) shows that for $(p,q)\in \Psi_K$, if $\pp\nmid\mm$ then $\Delta_{ap,q}\cap\Delta_{bp,q}\cap I_{\mm}$ is nonempty. We only need to show that if $\pp|\mm$, $\pp\not\in \Delta_{ap,q}\cap \Delta_{bp,q}$. Suppose that $\pp\in \Delta_{ap,q}$. Then $(ap,q)_{\pp}=-1$. Since $p\in \widetilde{\Phi_{(1,1)}}$ and $q\in \widetilde{\Phi_{(-1,-1)}}$ we can assume that, possibly after multiplying by a square of $K^{\times}$, $v_{\pp}(p)=v_{\pp}(q)=0$. Then $(ap,q)_{\pp}=-1$ implies that $v_{\pp}(ap)=v_{\pp}(a)$ is odd, which implies $v_{\pp}(b)$ must be $0$ since the fractional ideals $(a)$ and $(b)$ have disjoint support. Thus $(bp,q)_{\pp}=1$ and $\pp\not\in \Delta_{ap,q}\cap\Delta_{bp,q}$. 

Now we prove (3). Let $\pp_0$ satisfy $\pp_0\nmid \mm$ and $\psi_{L/K}(\pp_0)=(1,1)$; we wish to construct a pair $(p,q)\in \Psi_K$ such that $\Delta_{ap,q}\cap\Delta_{bp,q}=\{\pp_0\}$. We begin by constructing our candidate for $q$. Choose a different prime $\qq_0$ of $K$ not dividing $\mm$ with $\psi_{L/K}(\qq_0)=(1,1)$. Using Lemma \ref{Aprime}, choose $q\in K^{\times}$ such that $\psi_{L/K}((q))=(-1,-1)$, $\left(\frac{q}{\pp_0}\right)=-1$, and $q$ generates a prime ideal of $A:=\OO_{\{\qq_0\}}$. Thus there exists a prime $\qq$ of $K$ with $qA=\qq\cap A$. Observe that this implies that the support of $(q)$ is $\{\qq,\qq_0\}$. We claim that 
\[
\Delta_{a,q}\cap\Delta_{b,q}=\{\qq\}.
\] 
We begin by showing that $(a,q)_{\qq}=(b,q)_{\qq}=-1$. The support of $(a)$ and $(b)$ is contained in $\mm$, so $v_{\qq}(a)=v_{\qq}(b)=0$. We have that $v_{\qq}(q)$ is odd, so we need to show $a$ and $b$ are non-squares in the completion of $K$ at $\qq$. This follows immediately from $\psi_{L/K}(\qq)=(-1,-1)$.  From $\psi_{L/K}(\qq_0)=(1,1)$ we get that $\qq_0\not\in \Delta_{a,q}\cap\Delta_{b,q}$. No other prime $\pp$ in $I_{\mm}$ can appear in this set since we have $v_{\pp}(q)=v_{\pp}(a)=v_{\pp}(b)=0$. Finally, no prime  $\pp|\mm$ can occur in $\Delta_{a,q}\cap\Delta_{b,q}$ since we cannot have that $v_{\pp}(a)$ and $v_{\pp}(b)$ are both odd as their supports are disjoint, and $v_{\pp}(q)=0$. This proves the claim.

For each prime $\pp|\mm$, let $E_{\pp}\subseteq K$ be a generating set for $K_{\pp}^{\times}/K_{\pp}^{\times2}$ chosen so that for each $e\in E_{\pp}$, we have $v_{\pp_0}(e-1)\geq 0$. Fix $e_0\in K$ such that $\left(\frac{e_0}{\qq}\right)=-1$ and $\left(\frac{e_0}{\pp_0}\right)=1$.

Now one can construct $p\in K^{\times}$ so that $(p,q)\in \psi_{L/K}$ with $\Delta_{ap,q}\cap\Delta_{bp,q}=\{\pp_0\}$ as it is constructed in \cite{Park}, Lemma 3.25(c). The remainder of the proof exactly follows Park's after Equation 3.6 (loc.\ cit.). The only difference is that instances of the \emph{ideal} $(q)$ of the number field $K$ are replaced with the prime $\qq$ of $K$ in this proof.



\end{proof}

\subsection{Proof of main theorem}\label{univdefthmpf}
Our general strategy in proving Theorem \ref{univdefthm} follows that of \cite{Koe13}. 

\begin{defn} Given some finite set of primes $\Delta\subseteq S$ of $K$, consider the semi-local subring $R=\bigcap_{\pp\in \Delta} \OO_{\pp}$ of $K$. Set 
\[
\widetilde{R}=\{x\in K: \not\exists y \in J(R) \text{ with } xy=1\}.
\]
\end{defn}

\begin{lem} $\left.\right.$
\begin{enumerate}
\item If $J(R)$ is diophantine over  $K$, then $\widetilde{R}$ is defined by a universal formula in $K$. 
\item $\widetilde{R}=\bigcup_{\pp\in \Delta} \OO_{\pp}$, provided that $\Delta\not=\emptyset$. In particular, $\widetilde{\OO_{\pp}}=\OO_{\pp}$.
\end{enumerate}
\end{lem}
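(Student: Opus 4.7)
My plan is to handle the two parts separately, since (1) is a simple syntactic observation and (2) is a direct computation with valuations.

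For part (1), I would unfold the definition of $\widetilde{R}$. By definition, $x \in \widetilde{R}$ is equivalent to the formula $\forall y \, \neg (y \in J(R) \wedge xy = 1)$. Assuming $J(R)$ is diophantine, there is a polynomial $g(y, z_1, \ldots, z_n) \in K[y, z_1, \ldots, z_n]$ such that $y \in J(R) \iff \exists z_1, \ldots, z_n \in K : g(y, \vec{z}) = 0$. Substituting this in and pushing the negation past the existential quantifier, I obtain
\[
x \in \widetilde{R} \iff \forall y, z_1, \ldots, z_n \in K : \neg\bigl(g(y, \vec{z}) = 0 \wedge xy - 1 = 0\bigr),
\]
which is a universal first-order formula in $K$.

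For part (2), I would first identify the Jacobson radical of $R$. Since $R = \bigcap_{\pp \in \Delta}\OO_\pp$ is a semi-local Dedekind domain whose maximal ideals are exactly $\mm_\pp := \pp \OO_\pp \cap R$ for $\pp \in \Delta$, we have $J(R) = \bigcap_{\pp \in \Delta}\mm_\pp$. Explicitly, $y \in J(R)$ iff $v_\pp(y) \geq 1$ for every $\pp \in \Delta$. Then I would verify the equality by a two-way valuation argument. For $x \neq 0$, the equation $xy = 1$ forces $y = 1/x$, so the failure of $x \in \widetilde{R}$ amounts to $1/x \in J(R)$, i.e., $v_\pp(x) \leq -1$ for all $\pp \in \Delta$. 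Negating, $x \in \widetilde{R}$ iff there exists $\pp \in \Delta$ with $v_\pp(x) \geq 0$, i.e., $x \in \OO_\pp$. For $x = 0$, no $y$ satisfies $xy = 1$, so $0 \in \widetilde{R}$, and since $\Delta \neq \emptyset$ we also have $0 \in \bigcup_{\pp \in \Delta} \OO_\pp$. Combining cases yields $\widetilde{R} = \bigcup_{\pp \in \Delta}\OO_\pp$. The parenthetical statement $\widetilde{\OO_\pp} = \OO_\pp$ is then the special case $\Delta = \{\pp\}$.

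There is essentially no main obstacle here: the content is purely formal. The only point worth flagging is that the hypothesis $\Delta \neq \emptyset$ is genuinely needed, since otherwise $R = K$, $J(R) = \{0\}$, and $\widetilde{R} = K$, while $\bigcup_{\pp \in \emptyset}\OO_\pp = \emptyset$. Also, the argument in (1) implicitly uses that equations and conjunctions translate directly into polynomial systems, so the resulting formula is truly universal (of the form $\forall \vec{w} : f(\vec{w}, x) \neq 0$ for some polynomial $f$, obtained by clearing the single conjunction into a nonvanishing condition).
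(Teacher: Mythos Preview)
Your proof is correct and follows essentially the same approach as the paper: for (1) you unfold the definition and push the negation through the existential defining $J(R)$, and for (2) you use the valuation characterization $y\in J(R)\iff v_{\pp}(y)\geq 1$ for all $\pp\in\Delta$ together with the observation that $xy=1$ forces $y=1/x$. If anything, you are slightly more careful than the paper (which contains a harmless typo writing $\bigcap_{\pp\in\Delta}\OO_{\pp}$ for $J(R)$), and your explicit treatment of the case $x=0$ and the necessity of $\Delta\neq\emptyset$ is a nice touch.
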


\begin{proof}
To see (1), observe that 
\[
\widetilde{R}=\{x\in K: x=0 \text{ or } x^{-1}\in K\setminus J(R)\}.
\]
Since $J(R)$ is diophantine, its complement is defined by a universal formula, and thus so is $\widetilde{R}$. 

Now we prove (2). To see that $\widetilde{R}\subseteq\bigcup_{\pp\in \Delta} \OO_{\pp}$, assume $x\not\in \bigcup_{\pp\in \Delta} \OO_{\pp}$. This means that for all $\pp\in \Delta$, $v_{\pp}(x)<0$  and hence $v_{\pp}(x^{-1})>0$, giving $x^{-1}\in \bigcap_{\pp\in \Delta} \OO_{\pp} = J(R)$. Hence $x\not\in \widetilde{R}$. For the reverse inclusion, suppose $x\in \OO_{\pp}$ for some $\pp\in \Delta$. Then if $y\in J(R)$, we have that $v_{\pp}(x\cdot y)\geq 1$ and hence $x\cdot y \not=1$. Thus $x\in \widetilde{R}$. 
\end{proof}

Given a modulus $\mm$, let $S(\mm):=\{\pp: \pp|\mm\}$. 
\begin{thm}\label{univdefeqn}
For any global function field $K$ and finite set of primes $S\subset S_K$, with $\mm$ chosen as before,
\[
\OO_S = \bigcap_{\pp\in S(\mm)\setminus S} \widetilde{\OO_{\pp}} \cap \left(\bigcap_{\sigma\not=(1,1)}\bigcap_{p\in \Phi_{\sigma}} \widetilde{R_p^{\sigma}}\right) \cap \bigcap_{(p,q)\in \Psi_{K}} \widetilde{R_{p,q}^{(1,1)}},
\]
where $\Phi_{\sigma}$ and $\Psi_{K}$ are the diophantine sets in the previous section. 
\end{thm}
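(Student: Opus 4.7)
The plan is to prove the set equality directly by showing both inclusions, relying almost entirely on the properties of $\Phi_\sigma$ and $\Psi_K$ already assembled in Lemmas \ref{not1,1} and \ref{1,1}. Observe first that once the equality is established, the universal definability of $\OO_S$ follows immediately: each $\widetilde{\OO_\pp}$, $\widetilde{R_p^\sigma}$ and $\widetilde{R_{p,q}^{(1,1)}}$ is universally defined in $K$ because the corresponding Jacobson radicals are diophantine (Lemma \ref{not1,1}(2), Lemma \ref{1,1}(2), and Shlapentokh's result on local rings), and intersections over diophantine parameter sets translate, after quantifier shuffling, into a single universal formula.

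For the inclusion $\OO_S\subseteq \mathrm{RHS}$, let $x\in \OO_S$. For any $\pp\in S(\mm)\setminus S$ we have $v_\pp(x)\geq 0$, so $x\in \OO_\pp=\widetilde{\OO_\pp}$. For a fixed $\sigma\neq(1,1)$ and $p\in \Phi_\sigma$, Corollary \ref{cor3.20} identifies the prime set defining $R_p^\sigma$ with $\PP^\sigma(p)$, which consists of primes coprime to $\mm$ and hence outside $S$; since $\PP^\sigma(p)$ is nonempty by Lemma \ref{not1,1}(2), part (2) of the displayed lemma following the definition of $\widetilde{R}$ gives $x\in \widetilde{R_p^\sigma}$. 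Similarly, for $(p,q)\in \Psi_K$, Lemma \ref{1,1}(2) says $\Delta_{ap,q}\cap \Delta_{bp,q}$ is nonempty and contained in $I_\mm$, so is disjoint from $S$, and hence $x\in \widetilde{R_{p,q}^{(1,1)}}$.

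For the reverse inclusion, let $x\in \mathrm{RHS}$; we must show $v_{\pp_0}(x)\geq 0$ for every prime $\pp_0\notin S$. Split into cases by the position of $\pp_0$ relative to $\mm$ and $L/K$. If $\pp_0\in S(\mm)\setminus S$, then $x\in \widetilde{\OO_{\pp_0}}=\OO_{\pp_0}$ directly. If $\pp_0\nmid\mm$, set $\sigma=\psi_{L/K}(\pp_0)$. When $\sigma\neq(1,1)$, apply Lemma \ref{not1,1}(3) to produce $p\in \Phi_\sigma$ with $\PP^\sigma(p)=\{\pp_0\}$; then the intersection defining $R_p^\sigma$ (under the identification of Corollary \ref{cor3.20}) is the single set $\{\pp_0\}$, so $\widetilde{R_p^\sigma}=\OO_{\pp_0}$ and $x\in \OO_{\pp_0}$. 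When $\sigma=(1,1)$, invoke Lemma \ref{1,1}(3) to get $(p,q)\in \Psi_K$ with $\Delta_{ap,q}\cap \Delta_{bp,q}=\{\pp_0\}$, so $\widetilde{R_{p,q}^{(1,1)}}=\OO_{\pp_0}$ and again $x\in \OO_{\pp_0}$.

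The genuine work has already been shouldered by the preceding machinery, so no step here should require a delicate calculation; the only thing that needs careful bookkeeping is verifying that every prime appearing in a defining intersection of some $R_p^\sigma$ or $R_{p,q}^{(1,1)}$ lies outside $S$. This is guaranteed because all such primes either divide $(p)$, $(q)$ (which, for $p\in \Phi_\sigma$ and $(p,q)\in \Psi_K$, are coprime to $\mm\supseteq$ primes of $S$) or satisfy Hilbert symbol conditions that force them into $I_\mm$. The main obstacle, if any, is simply marshalling the correct witnesses in the $\sigma=(1,1)$ case, and that is precisely what the hard work of Lemma \ref{1,1}(3) delivers.
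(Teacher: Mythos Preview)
Your proof is correct and follows essentially the same strategy as the paper: verify both inclusions by case analysis on whether a given prime $\pp_0$ divides $\mm$ and on the value $\psi_{L/K}(\pp_0)$, invoking Lemma~\ref{not1,1}(3) and Lemma~\ref{1,1}(3) to produce singleton witnesses. One small difference: for the case $\sigma\neq(1,1)$ the paper actually constructs \emph{two} parameters $p,p'\in\Phi_\sigma$ and intersects the corresponding unions to isolate $\OO_{\pp_0}$, whereas you go directly from $\PP^\sigma(p)=\{\pp_0\}$ (which Lemma~\ref{not1,1}(3) already provides) to $\widetilde{R_p^\sigma}=\OO_{\pp_0}$; your shortcut is legitimate and the paper's extra $p'$ is redundant once the singleton conclusion of Lemma~\ref{not1,1}(3) is in hand.
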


\begin{proof}
For $p\in \Phi_{\sigma}$ and $(p,q)\in \Psi_{K}$, all of the sets $\PP^{\sigma}(p)$ and $\Delta_{ap,q}\cap\Delta_{bp,q}$ are nonempty. Thus the right hand side is equal to 
\[
R_S:=\bigcap_{\pp\in S(\mm)\setminus S}\OO_{\pp} \cap \left(\bigcap_{\sigma\not=(1,1)}\bigcap_{\pp\in \Phi_{\sigma}}\bigcup_{\pp\in \PP^{\sigma}(p)} \OO_{\pp}\right) \cap \bigcap_{(p,q)\in \Psi_{K}}\bigcup_{\pp\in \Delta_{ap,q}\cap\Delta_{bp,q}} R_{p,q}^{(1,1)}.
\]

To show $R_S$ is contained in $\OO_S$, consider a prime $\pp_0\not \in S$. We need to show that any $x\in R_S$ is integral at $\pp_0$. If $\pp_0|\mm$, this is clear. If not, consider the image of $\pp_0$ under $\psi_{L/K}$.  First we assume $\psi_{L/K}(\pp_0)\not= (1,1)$. Then we claim that we can choose $p,p'\in \Phi_{\sigma}$ such that 
\[
\OO_{\pp_0} = \bigcup_{\pp\in \PP^{\sigma}(p)} \OO_{\pp} \cap \bigcup_{\pp\in \PP^{\sigma}(p')} \OO_{\pp}.
\]
If this claim is true, then $x\in R_S$ implies that $v_{\pp_0}(x)\geq 0$. Suppose $\sigma=(-1,-1)$. Using Lemma \ref{not1,1}, we find $p\in \Phi_{\sigma}$ such that $\{\pp_0\}=\PP^{\sigma}(p)$. Now let $\pp_1$ be some other prime of $K$ not dividing $\mm$ with $\psi_{L/K}(\pp_1)=(1,1)$, set $S'=\{\pp_1\}$ and $A:=\OO_{S'}$. Then using Lemma \ref{3.19}, we find a prime $\qq$ of $K$ such that $\qq\cap A$ is in the ideal class of $(\pp_0\cap A)^{-1}$ in $\Cl(A)$ and $\psi_{L/K}(\qq)=(1,1)$. There  exists an element $p'\in A$ such that $(\pp_0\cap A)(\qq\cap A)=p'A$. Then $p'\in \Phi_{(-1,-1)}$, as $\pp_0,\pp_1,$ and $\qq$ do not divide $\mm$, $\PP(p')=\{\pp_0,\pp_1,\qq\}$, and
\[
\psi_{L/K}(p')=\psi_{L/K}(\pp_0)\psi_{L/K}(\pp_1)\psi_{L/K}(\qq)=(-1,-1).
\]
Thus $\PP^{(-1,-1)}(p)\cap\PP^{(-1,-1)}(p)=\{\pp_0\}$ and the claim follows; the case of $\sigma=(-1,1)$ and $\sigma=(1,-1)$ is entirely similar. 

If $\pp_0$ satisfies $\psi_{L/K}(\pp_0)=(1,1)$ then we have seen in Lemma \ref{1,1} that there exist $(p,q)\in \Psi_K$ such that $\{\pp_0\}=\Delta_{ap,q}\cap \Delta_{bp,q}$ (and consequently $(p,q)\in \Psi_{K}$), implying
\[
\bigcup_{\pp\in \Delta_{ap,q}\cap \Delta_{bp,q}} \OO_{\pp} = \OO_{\pp_0}.
\]

Thus $x$ is integral at primes outside of $S$. 

To show the reverse inclusion, we claim that membership in $R_S$ imposes no integrality condition at a prime in $S$.  If $\pp_0\in S$, for any $\sigma\in \Gal(L/K)$ and any $p\in \Phi_{\sigma}$, we have $\pp_0\not\in \PP^{\sigma}(p)$. Additionally, $\pp_0\not\in \Delta_{ap,q}\cap\Delta_{bp,q}$ for $(p,q)\in \Psi_K$ as $\Delta_{ap,q}\cap\Delta_{bp,q}\subseteq I_{\mm}$ by Lemma \ref{1,1} part (2). 
\end{proof}

Now we are ready to prove our first main theorem.

\vspace{.5em}
\noindent{\bf Theorem \ref{univdefthm}.}
\emph{ For any global function field $K$ with $\ch(K)\not=2$, and any nonempty, finite set of primes $S$ of $K$, $\OO_S$ is defined by a first-order universal formula.}

\begin{proof}
Theorem \ref{univdefeqn} shows that for $t\in K$, 
\begin{align*}
t\in \OO_S \iff&  t \in \bigcap_{\pp|\mm} \widetilde{\OO_{\pp}}  \\
&\land\forall p \bigwedge_{\sigma\not=(1,1)} (p\not\in \Phi_{\sigma} \lor t\in \widetilde{R_p^{\sigma}}) \\
&\land\forall p,q \quad (p,q)\not\in \Psi_{K} \lor t\in \widetilde{R_{p,q}^{(1,1)}}.
\end{align*}
Given $\sigma\in \Gal(L/K)$, $p\in \Phi_{\sigma}$, and $(p,q)\in \Psi_{K}$, we have that $J(R^{\sigma}_p)$ and $J(R^{(1,1)}_{p,q})$ are diophantine by Lemmas \ref{not1,1} and \ref{1,1}. The sets $\Phi_{\sigma}$ and $\Psi_K$ are diophantine by the same lemmas, so their complements are defined by a universal formula.  Additionally, membership in $\widetilde{R_p^{\sigma}}$ is given by a universal formula, along with membership in $\widetilde{R_{p,q}^{(1,1)}}$. Hence membership in $\OO_S$ can be given by a universal formula.

\end{proof}

\section{Non-squares and non-norms of global fields are diophantine}
Now we let $K$ be a global field with $\ch(K)\not=2$.  Throughout this
section, fix $a,b,c,d$ as in Lemmas~\ref{3.19} and~\ref{choosec,d} if
$K$ is a global function field, and if $K$ is a number field, choose
$a,b\in K^{\times}$ as in Proposition 3.19 of
\cite{Park}. Additionally, we fix $c,d\in K^{\times}$ satisfying the
same assumptions as in Lemma~\ref{choosec,d} but now in the case that
$K$ is a number field. Corollary~3.20 in~\cite{Park} is incorrect as
stated, and has to be modified as in
Corollary~\ref{cor3.20} of our paper. With this modification, the other results
from~\cite{Park} are correct as stated. Set $L:=K(\sqrt{a},\sqrt{b})$ and fix an
admissible modulus $\mm$ for $L/K$ where $\mm$ contains all the primes
dividing $(2abcd)$, and all real places if $K$ is a number field. In
this section we prove Theorems~\ref{nonsq} and~\ref{nonnorm}.
\subsection{Non-squares}\label{section:nonsq}
We begin by proving Theorem \ref{nonsq}, i.e.\ that $K\setminus
K^{\times2}$ is diophantine over $K$. We use the fact that $x\not\in
K^{\times2}$ is equivalent to $x$ being a non-square in a completion
of $K$ at some prime $\pp$, where $\pp$ may be finite or
infinite. 

Suppose $\infty$ is a real archimedean prime of $K$ corresponding to an embedding $\omega:K\hookrightarrow \RR$. For $x\in K^{\times}$, we have that $x$ is not a square in $K_{\infty}$ if and only if $\omega(x)<0$. Therefore we require the following lemma:

\begin{lem}\label{positivedioph}
Suppose $K$ is a number field and that $\infty$ is a real archimedean prime of $K$ corresponding to an embedding $\omega: K \hookrightarrow K_{\pp}=\RR$. Then the set 
\[
K_{\omega}^{\times}:=\{x\in K^{\times}: \omega(x)>0\}
\]
is diophantine over $K$. Moreover, the set 
\[
\{x\in K^{\times}: x \text{ is not totally positive } \}
\]
is diophantine over $K$. 
\end{lem}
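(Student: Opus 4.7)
The plan is to reduce the first statement to the classical fact that totally positive elements of a number field admit a uniform representation as a bounded sum of squares, and then use weak approximation across the archimedean places to single out $\omega$.

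First, I would invoke the theorem that there exists an integer $n$, depending only on $K$, such that every totally positive element of $K$ is a sum of $n$ squares in $K$ (the Pythagoras number of a number field is finite; Siegel's theorem gives $n=4$ when $K$ is totally real, and Pfister's bound handles the general case). The set $P_n := \{y_1^2+\cdots+y_n^2 : y_1,\ldots,y_n\in K\}$ is manifestly diophantine over $K$, and intersecting with the diophantine set $K^{\times}$ (defined by $\exists y:\ xy=1$) gives a diophantine definition of the set $K^+$ of totally positive elements of $K$.

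Next, let $\omega=\omega_1,\omega_2,\ldots,\omega_r$ enumerate the real archimedean places of $K$. For each sign vector $\epsilon=(\epsilon_2,\ldots,\epsilon_r)\in\{\pm1\}^{r-1}$, I would use weak approximation to choose $\alpha_\epsilon\in K^{\times}$ with $\omega(\alpha_\epsilon)>0$ and $\mathrm{sgn}(\omega_i(\alpha_\epsilon))=\epsilon_i$ for $i=2,\ldots,r$. The key observation is then that, for $x\in K^{\times}$, we have $\omega(x)>0$ if and only if $\alpha_\epsilon x\in K^+$ for the sign vector defined by $\epsilon_i := \mathrm{sgn}(\omega_i(x))$. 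Consequently
\[
K_\omega^{\times}=\bigcup_{\epsilon\in\{\pm1\}^{r-1}}\{x\in K^{\times}:\alpha_\epsilon x\in K^+\},
\]
a finite union of diophantine sets, and therefore diophantine over $K$.

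For the second assertion, a nonzero element $x\in K$ fails to be totally positive iff $\omega'(x)<0$ for some real place $\omega'$, equivalently $-x\in K_{\omega'}^{\times}$ for some real $\omega'$. Thus the set of non-totally-positive elements decomposes as $\bigcup_{\omega'\text{ real}}\{x:-x\in K_{\omega'}^{\times}\}$, a finite union of diophantine sets by the first part, hence diophantine over $K$. The main technical ingredient is the bounded sum-of-squares representation for totally positive elements; once that classical input is in hand, the weak approximation step is an elementary combinatorial refinement that upgrades ``totally positive'' to ``positive at a single prescribed place.''
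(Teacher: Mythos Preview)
Your proof is correct, but it takes a different route from the paper's. The paper fixes a finite prime $\pp$ and considers the quaternion algebra $B/K$ ramified exactly at $\pp$ and $\infty$; by the Hasse--Schilling norm theorem, the reduced norm $\Nrd: B^\times \to K^\times$ has image precisely $K_\omega^\times$, and since $\Nrd$ of a general element is a quaternary quadratic form over $K$, this realizes $K_\omega^\times$ directly as a diophantine set in one stroke. Your approach instead first captures the \emph{totally} positive elements via a bounded sum of squares, and then refines to a single real place by taking a finite union over sign-correcting multipliers $\alpha_\epsilon$. Both arguments ultimately rest on a classical quadratic-form input; the paper's is more direct (one form, no case split over sign vectors), while yours is perhaps more elementary in that the finiteness of the Pythagoras number is better known than the Hasse--Schilling description of reduced norms. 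For the second assertion the two proofs coincide: decompose the non-totally-positive elements as the finite union $\bigcup_i (-K_{\omega_i}^\times)$.
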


\begin{proof}
Let $\pp$ be a finite prime of $K$ and let $B$ be the quaternion
algebra over $K$ ramified exactly at $\pp$ and $\infty$. Let $\Nrd: B\to K$ denote the reduced norm map on $B$. Then by Facts I and II of \cite{HS36}, $\Nrd(B^{\times})=K_{\omega}^{\times}$. Let $b_1,b_2,b_3,b_4$ be a $K$-basis of $B$. Then 
\[
q(x_1,x_2,x_3,x_4):=\Nrd\left(\sum_{i=1}^4 x_ib_i\right)
\]
is a quadratic form in the variables $x_1,\ldots,x_4$ with coefficients in $K$, and 
\[
K_{\omega}^{\times} = \{x\in K^{\times}: \exists a_1,\ldots,a_4 \in K\text{ s.t. } q(a_1,a_2,a_3,a_4)=x\}.
\]
We conclude that $K_{\omega}^{\times}$ is diophantine over $K$. For an
alternative proof, see \cite{Den80}, Lemma 10. 

 Now we will prove the second statement. Let $\omega_1,\ldots,\omega_r$ denote a complete set of representatives of the inequivalent embeddings of $K$ into $\RR$. The set of elements which are not totally positive is diophantine over $K$, since it is the finite union
$\bigcup_{i} -K_{\omega_i}^{\times}.$

\end{proof}

The main lemma used in proving Theorem \ref{nonsq} is the following:
\begin{lem}\label{noreal}
Let $K$ be a global field with $\ch(K)\not=2$. Then
\[
x\not\in K^{\times2} \iff \begin{cases} x \text { is not totally
    positive, or } \\ v_{\pp}(x) \text{ is odd for some } \pp|\mm_0, \text{ or } \\ \exists p\in \Phi_{(-1,1)} \text{ such that } x\in a\cdot K^{\times2}\cdot(1+J(R_p^{(-1,1)})). \end{cases}
\]
\end{lem}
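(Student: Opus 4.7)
My plan is to prove the two implications separately.

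For $(\Leftarrow)$, each condition produces a local obstruction to $x$ being a square. If $x$ is not totally positive, some real embedding gives $\omega(x)<0$, so $x\notin K_{\omega}^{\times 2}$. If $v_{\pp}(x)$ is odd for some $\pp|\mm_0$, then $x\notin K^{\times 2}$ since squares have even valuation. Finally, if $p\in \Phi_{(-1,1)}$ and $x=as^{2}(1+j)$ with $j\in J(R_p^{(-1,1)})$, pick any $\pp_0\in \PP^{(-1,1)}(p)$ (nonempty by Lemma~\ref{not1,1}(2)); since $\psi_{L/K}(\pp_0)$ sends $\sqrt{a}\mapsto -\sqrt{a}$, $a$ is a non-square in $K_{\pp_0}$. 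By Corollary~\ref{cor3.20} the index set defining $J(R_p^{(-1,1)})$ contains $\pp_0$, so $v_{\pp_0}(j)\ge 1$; because $\pp_0\nmid\mm_0$ has odd residue characteristic, Hensel's lemma gives $1+j\in K_{\pp_0}^{\times 2}$. Hence $x$ equals $a$ times a square in $K_{\pp_0}^{\times}$, hence is a non-square in $K_{\pp_0}$, so $x\notin K^{\times 2}$.

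For $(\Rightarrow)$, assume $x\notin K^{\times 2}$, $x$ totally positive, and $v_{\pp}(x)$ even for every $\pp|\mm_0$. I would first argue that the class of $x$ in $K^{\times}/K^{\times 2}$ lies outside $\{1,a,b,ab\}$: by construction in Lemma~\ref{3.19}, each of $a$ and $b$ has odd valuation at some prime contained in the support of $\mm_0$, and since $(a)$ and $(b)$ are coprime, $ab$ also has odd valuation at some such prime. So if $x$ were congruent to any of $a$, $b$, $ab$ modulo $K^{\times 2}$, then $v_{\pp}(x)$ would be odd at some $\pp|\mm_0$, contradicting our hypothesis. Therefore the elements $\sqrt{x/a}$, $\sqrt{a}$, $\sqrt{b}$ are linearly independent over $K^{\times}/K^{\times 2}$, and $M:=K(\sqrt{x/a},\sqrt{a},\sqrt{b})/K$ is a $(\ZZ/2)^{3}$-Galois extension.

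Applying the Chebotarev density theorem to $M/K$, I find a prime $\pp_0\nmid\mm$ of $K$ whose Frobenius in $\Gal(M/K)$ fixes $\sqrt{x/a}$ and $\sqrt{b}$ but acts nontrivially on $\sqrt{a}$; equivalently, $x/a$ and $b$ are squares in $K_{\pp_0}^{\times}$ while $a$ is not, and in particular $\psi_{L/K}(\pp_0)=(-1,1)$. Then Lemma~\ref{not1,1}(3) produces $p\in \Phi_{(-1,1)}$ with $\PP^{(-1,1)}(p)=\{\pp_0\}$. By weak approximation, choose $s\in K^{\times}$ with $v_{\pp_0}(s^{2}-x/a)$ large enough that $j:=x/(as^{2})-1$ satisfies $v_{\pp_0}(j)\ge 1$; this places $j\in J(R_p^{(-1,1)})$, so $x\in a\cdot K^{\times 2}\cdot(1+J(R_p^{(-1,1)}))$ as required.

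The main technical obstacle is confirming that the index set of primes defining $J(R_p^{(-1,1)})$ really is $\{\pp_0\}$, not $\{\pp_0\}$ together with some extraneous primes dividing $\mm$. This is the content of Corollary~\ref{cor3.20}: the Hilbert-symbol constraints imposed on $c$ in Lemma~\ref{choosec,d}, together with the choice of $\mm$ to contain every $\pp$ with $(a,c)_{\pp}=-1$, force the intersection $\Delta_{a,p}\cap \Delta_{ab,p}\cap \Delta_{a,cp}$ to meet $\{\pp:\pp|\mm\}$ trivially. Once this is established, the weak approximation step at $\pp_0$ alone suffices, and the representation of $x$ in the desired form follows.
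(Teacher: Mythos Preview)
Your proof is correct and follows the paper's approach: both use Chebotarev to locate a prime $\pp_0\in\PP^{(-1,1)}$ at which $x$ is a local non-square, invoke Lemma~\ref{not1,1}(3) to produce $p\in\Phi_{(-1,1)}$ with $\PP^{(-1,1)}(p)=\{\pp_0\}$, and then verify membership in $a\cdot K^{\times2}\cdot(1+J(R_p^{(-1,1)}))$. The only cosmetic differences are that the paper packages your weak-approximation and Hensel steps into a separate lemma (Lemma~\ref{nslem1}), and that it argues linear disjointness of $K(\sqrt{x})$ from $L$ via ramification (after first normalizing $x$ so that $v_{\pp}(x)=0$ for all $\pp\mid\mm$) rather than your direct valuation-parity check on $a$, $b$, $ab$.
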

Here, $\mm_0$ denotes the finite primes dividing $\mm$, and $\Phi_{(-1,1)}$, $R_p^{(-1,1)}$, $a$, and $\mm$ are as in
Section
 \ref{integralatsigma}.
Before proving Lemma~\ref{noreal}, we will show how to use it to prove Theorem~\ref{nonsq}.

\begin{proof}[Proof of Theorem \ref{nonsq} from Lemma \ref{noreal}]
We begin by showing the right side of the equivalence in Lemma
\ref{noreal} defines a diophantine subset of $K$. We claim that, for a prime $\pp$ of $K$, the set
\[
\{x\in K^{\times}: v_{\pp}(x) \text{ is odd}\} 
\]
is diophantine over $K$. To see this, fix $p\in K^{\times}$ with
$v_{\pp}(p)=1$. Then $v_{\pp}(x)$ is odd if and only if 
$x\in p\cdot K^{\times2}\cdot \OO_{\pp}^{\times}$. This is diophantine over $K$ since
\[
\OO^{\times}_{\pp}=\OO_{\pp}\cap\{x\in K^{\times}: x^{-1}\in \OO_{\pp}\}.
\]
Since only finitely many primes divide $\mm_0$, the condition that 
$v_{\pp}(x)$ is odd for some $\pp|\mm_0$ is diophantine as well. As
noted in Lemma \ref{positivedioph}, the condition that $x$ is not
 totally positive defines a diophantine set. If $K$ is a global
 function field, Lemma \ref{not1,1} parts (1) and (2) complete the
 argument that the right side of Lemma \ref{noreal} defines a
 diophantine subset of $K$. The number field case follows from parts
 (a) and (b) of Lemma 3.22 of \cite{Park}. Hence Lemma~\ref{noreal} 
shows that $K^{\times}\setminus K^{\times2}$ is diophantine.
\end{proof}

We are now left with proving Lemma \ref{noreal}, for which we need the following lemma.

\begin{lem}\label{nslem1}
Let $\Phi_{(-1,1)}$, $R_p^{(-1,1)}$, $a$, $b$, and $\mm$ be defined as in Section \ref{integralatsigma}. Let $p\in \Phi_{(-1,1)}$. Then
 $x\in a\cdot K^{\times2}\cdot(1+J(R_p^{(-1,1)}))$ if and only if there exists an element $t\in K^{\times}$ such that $\forall \qq\in \PP^{(-1,1)}(p)$, $v_{\qq}(xt^2)=0$ and the image of $xt^2$ in the residue field of $\qq$ is not a square. 
\end{lem}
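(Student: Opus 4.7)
The plan is to unwind the definitions of the semi-local ring $R_p^{(-1,1)}$ and its Jacobson radical, then toggle between the multiplicative decomposition $x = a s^2 y$ and congruence conditions at the (finitely many) primes in $\PP^{(-1,1)}(p)$ using weak approximation. The key arithmetic input is that each $\qq \in \PP^{(-1,1)}(p)$ satisfies $\psi_{L/K}(\qq) = (-1,1)$, so $\qq$ is inert in $K(\sqrt{a})$ and thus the reduction of $a$ in $\FF_\qq$ is a non-square.

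First I would observe (via Corollary \ref{cor3.20}) that $R_p^{(-1,1)} = \bigcap_{\qq \in \PP^{(-1,1)}(p)} \OO_\qq$ is a semi-local ring whose maximal ideals correspond to the primes in $\PP^{(-1,1)}(p)$, so that
\[
J(R_p^{(-1,1)}) = \{z \in R_p^{(-1,1)} : v_\qq(z) \geq 1 \text{ for all } \qq \in \PP^{(-1,1)}(p)\}.
\]
Thus $y \in 1 + J(R_p^{(-1,1)})$ is equivalent to $v_\qq(y-1) \geq 1$ at every such $\qq$ (which then forces $v_\qq(y) = 0$). I would also record that since $(a) \mid \mm$ and $\PP^{(-1,1)}(p) \subseteq I_\mm$, no $\qq \in \PP^{(-1,1)}(p)$ divides $(a)$, so $a$ reduces to a well-defined unit in $\FF_\qq$, which is moreover a non-square by the Frobenius computation above.

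For the forward direction, suppose $x = a s^2 y$ with $s \in K^\times$ and $y \in 1 + J(R_p^{(-1,1)})$. Setting $t = s^{-1}$, I get $xt^2 = ay$, and for each $\qq \in \PP^{(-1,1)}(p)$ the congruence $y \equiv 1 \bmod \qq$ gives $v_\qq(xt^2) = 0$ and $xt^2 \equiv a \bmod \qq$, which is a non-square in $\FF_\qq$.

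For the backward direction, given $t$ as described, the fact that $a$ and $xt^2$ are both non-squares in $\FF_\qq^\times$ forces $xt^2/a$ to be a square mod $\qq$ for each $\qq \in \PP^{(-1,1)}(p)$. I would then choose $s_\qq \in \OO_\qq^\times$ with $s_\qq^2 \equiv xt^2/a \bmod \qq$ and invoke weak approximation on the finite set $\PP^{(-1,1)}(p)$ to produce $s \in K^\times$ with $v_\qq(s - s_\qq) \geq 1$ at every such $\qq$. Then $s^2 \equiv xt^2/a \bmod \qq$, and setting $y := xt^2/(as^2)$ gives $v_\qq(y-1) = v_\qq(xt^2 - as^2) - v_\qq(as^2) \geq 1$, so $y \in 1 + J(R_p^{(-1,1)})$ and $x = a(s/t)^2 y$ lies in $a \cdot K^{\times 2} \cdot (1 + J(R_p^{(-1,1)}))$.

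The only delicate points I anticipate are verifying that the residue of $a$ modulo each $\qq \in \PP^{(-1,1)}(p)$ is indeed a non-square (which needs the fact that $\qq$ misses the support of $(a)$) and tracking valuations carefully enough to deduce $v_\qq(y-1) \geq 1$ rather than just $\geq 0$; both are routine once the definitions from Section \ref{integralatsigma} are expanded.
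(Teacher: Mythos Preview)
Your proposal is correct and follows essentially the same route as the paper's proof: both unwind $R_p^{(-1,1)}$ and $J(R_p^{(-1,1)})$ as the semi-local ring and Jacobson radical over $\PP^{(-1,1)}(p)$, use the Frobenius condition $\psi_{L/K}(\qq)=(-1,1)$ to conclude that $a$ is a non-square unit in each residue field, and then glue local square roots of $xt^2/a$ into a global $s$ (the paper phrases this as the Chinese Remainder Theorem, you as weak approximation, but the content is the same). The forward direction is identical up to relabeling $t\leftrightarrow s^{-1}$.
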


\begin{proof}
Since $p\in \Phi_{(-1,1)}$, we have that $\PP^{(-1,1)}(p)\not=\emptyset$. In the function field setting, this follows from Lemma \ref{not1,1}, and in the number field setting, it follows from Lemma 3.22 of \cite{Park}. We also have that 
\begin{align*}
R_p^{(-1,1)}&=\bigcap_{\qq\in \PP^{(-1,1)}(p)} \OO_{\qq}, \\ 
J(R_p^{(-1,1)})&=\bigcap_{\qq\in \PP^{(-1,1)}(p)} \qq O_{\qq}.
\end{align*}
Suppose there exists $t\in K^{\times}$ as in the lemma. Let $\qq\in \PP^{(-1,1)}(p)$. Then $xt^2\equiv a$ in  $(\OO_{\qq}/\qq)^{\times}/(\OO_{\qq}/\qq)^{\times2}$, since by our choice of $a$ and $b$,
\[
\left(\left(\frac{a}{\qq}\right)_2,\left(\frac{b}{\qq}\right)_2\right) = (\qq,K(\sqrt{a},\sqrt{b})/K)=(-1,1).
\]
Here $\left(\frac{\cdot}{\qq}\right)_2$ is the degree $2$ power residue symbol for $K$. Thus there is some $s_{\qq}\in (\OO_{\qq})^{\times}$ such that $xt^2=as_{\qq}^2 \mod \qq$; using the Chinese remainder theorem we find an $s$ so that $xt^2=as^2\mod \qq$ for each $\qq$. Since $q:=xt^2-as^2\in \qq$ for each $\qq\in \PP^{(-1,1)}(p)$, it is in their intersection. Thus
\[
x=a\cdot(s/t)^2(1+q/(as^2)).
\]
We claim $x$ is in $a\cdot K^{\times2}\cdot (1+\bigcap_{\qq\in \PP^{(-1,1)}(p)} \qq\OO_{\qq})$. Let $\qq\in \PP^{(-1,1)}(p)$. Then $v_{\qq}(a)=0$ because $(a)$ is only divisible by primes dividing $\mm$ and $\qq\in I_{\mm}$. Additionally, $v_{\qq}(s)=0$ and $v_{\qq}(q)\geq 1$, because $s\in \OO_{\qq}^{\times}$ and $q \in \qq$. The claim follows as 
\[
v_{\qq}(q/as^2)\geq 1-v_{\qq}(a)-2v_{\qq}(s)\geq1.
\]

For the other implication, write $x=at^2(1+j)$ with $t\not=0$ and $j\in \bigcap_{\qq\in \PP^{(-1,1)}(p)} \qq \OO_{\qq}$. Then 
\[
v_{\qq}(x/t^2)=v_{\qq}(a(1+j))=v_{\qq}(a)=0
\]
for each $\qq\in \PP^{\sigma}(p)$, and $x/t^2\equiv a$ modulo $\qq$ for $\qq\in \PP^{(-1,1)}(p)$. Finally, by construction, $a$ is a non-square in the completion at any prime $\qq\in \PP^{(-1,1)}(p)$.
\end{proof}

We now prove Lemma \ref{noreal}:
\begin{proof}[Proof of Lemma \ref{noreal}]
  For the forward implication, suppose $x$ is not a square. If $x$ is not totally positive, or $v_{\pp}(x)$ is odd for some $\pp|\mm$, we are done, so assume that $v_{\pp}(x)$ is even for each
  $\pp|\mm$ and that $x$ is totally positive. Using weak approximation, we may assume that in fact $v_\pp(x)=0$ for each
  $\pp|\mm$. This does not change the truth value of either
  side of the implication in Lemma \ref{noreal}, as both statements are invariant under
  multiplying $x$ by a square.

What we will now show is that there is a $p\in \Phi_{(-1,1)}$ such
that $\PP^{(-1,1)}(p)=\{\qq\}$, $v_{\qq}(x)=0$, and such that $x$ modulo $\qq$
is not a square. Together with Lemma \ref{nslem1}, this will imply that $x\in a\cdot
K^{\times2}\cdot(1+J(R_p^{(-1,1)}))$.

As $K(\sqrt{x})$ is a degree $2$ extension of $K$ unramified
at all $\pp|\mm$, it is linearly disjoint from
$L=K(\sqrt{a},\sqrt{b})$ over $K$. Let $\tau$ be the nontrivial
automorphism of $K(\sqrt{x})/K$ and consider $(\tau,(-1,1))\in
\Gal(K(\sqrt{x})/K)\times\Gal(L/K)$. By the
Chebotarev Density Theorem, there is a prime $\qq$ of $K$ so that its
associated Frobenius automorphism is $(\tau,(-1,1))$. The
restriction of this automorphism to $K(\sqrt{x})$ is
\[
\tau=(\qq,K(\sqrt{x})/K),
\]
implying $\qq$ does not split completely in $K(\sqrt{x})$. Hence $x$ is not a square
in the completion of $K$ at $\qq$. The restriction to $L$ is
\[
(-1,1)=(\qq,K(\sqrt{a},\sqrt{b})/K),
\]
so we have that $\qq\in \PP^{(-1,1)}$. If $K$ is a global function field, Lemma \ref{not1,1} implies there exists an element $p\in
K^{\times}$ so that $\{\qq\}= \PP^{(-1,1)}(p)$. In the number field setting, this follows from Lemma 3.22 in
\cite{Park}. This is the desired element $p$ and prime $\qq$, and finishes the first half of the proof.

For the reverse implication, suppose that there is an element $p\in \Phi_{(-1,1)}$ such that $x\in a\cdot K^{\times2}\cdot (1+J(R^{(-1,1)}_p))$. By Lemma \ref{nslem1} there exists $t\not=0$ such that $xt^2$ is a non-square in the completion of $K$ at each prime $\qq\in \PP^{(-1,1)}(p)$. This implies $xt^2$ itself is not a square in $K$, i.e.\ $x\not\in K^{\times2}$. 

\end{proof}

\subsection{Non-norms}\label{section:non-norms}
Let $K$ be a global field with $\ch(K)\not=2$. To prove Theorem \ref{nonnorm}, which states that the set
\[
\{(x,y)\in K^{\times}\times K^{\times}: x \text{ is not a norm of } K(\sqrt{y})\}
\]
is diophantine over $K$, we use the Hasse
norm theorem: for a cyclic extension $L/K$, an element of $K$ is a
(relative) norm of an element of $L$ if and only if it is a local norm in
every completion of $K$. We will use the fact that
\[
(x,y)_{\pp}=-1 \Leftrightarrow x \text{ is not a local norm of } K(\sqrt{y}) \Leftrightarrow y \text{ is not a local norm of } K(\sqrt{x}).
\]

We begin by establishing that, given a finite or infinite prime $\pp$ of $K$, the collection of pairs $(x,y)\in K^{\times}\times K^{\times}$ such that $x$ is not a local norm in the completion of $K(\sqrt{y})$ at $\pp$ is diophantine over ~$K$. 

First we need the following lemma, which will be used both in the proof of Theorem \ref{nonlocnorm} and later in proving Theorem \ref{nonnorm}.

\begin{lem}\label{hsodd}
Let $\pp$ be a finite prime of $K$ with $|\FF_{\pp}|$ odd. Fix $p,s\in K^{\times}$ so that $v_{\pp}(p)=1$ and $v_{\pp}(s)=0$ so that $\red_{\pp}(s)$ is not a square in the residue field of $\pp$. Then for $x,y\in K^{\times}$, $(x,y)_{\pp}=-1$ if and only if
\begin{align*}
\big((x\in p\cdot K^{\times2}\cdot\OO_{\pp}^{\times}) &\land (y \text{ or } -xy \in s\cdot K^{\times2}  \cdot(1+\pp\OO_{\pp}))\big) \\ 
\lor \big((y\in p\cdot K^{\times2} \cdot \OO_{\pp}^{\times}) &\land (x \text{ or } -xy \in s\cdot K^{\times2} \cdot(1+\pp\OO_{\pp}))\big).
\end{align*}
\end{lem}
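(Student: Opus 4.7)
The plan is to prove Lemma~\ref{hsodd} by a case analysis on the parities of $v_{\pp}(x)$ and $v_{\pp}(y)$, after first reinterpreting the local membership conditions on the right-hand side in terms of the class of $x$ or $y$ in $K_{\pp}^{\times}/K_{\pp}^{\times 2}$.

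Since $|\FF_{\pp}|$ is odd, Hensel's lemma yields $1+\hat{\pp} R_{\pp}\subseteq R_{\pp}^{\times 2}$, and hence the natural map $\{1,s,p,sp\}\to K_{\pp}^{\times}/K_{\pp}^{\times 2}$ is a bijection. First I would verify that $x\in p\cdot K^{\times 2}\cdot\OO_{\pp}^{\times}$ is equivalent to $v_{\pp}(x)$ being odd, and that $y\in s\cdot K^{\times 2}\cdot(1+\pp\OO_{\pp})$ is equivalent to the class of $y$ in $K_{\pp}^{\times}/K_{\pp}^{\times 2}$ being the class of $s$, i.e., $v_{\pp}(y)$ is even and the leading unit $y/p^{v_{\pp}(y)}$ reduces to a non-square in $\FF_{\pp}$. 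The nontrivial direction of the second equivalence requires writing $y=s k^2 u$ with $k\in K^{\times}$ and $u\in 1+\pp\OO_{\pp}$: starting from a local factorization $y=sz^2$ with $z\in K_{\pp}^{\times}$, weak approximation supplies $k\in K^{\times}$ with $v_{\pp}(k)=v_{\pp}(z)$ and $k$ sufficiently close to $z$ at $\pp$, after which $u:=y/(sk^2)$ lies in $1+\pp\OO_{\pp}$ by an elementary valuation estimate. Once these two identifications are in place, the right-hand side of the biconditional becomes a purely local condition on the classes of $x$ and $y$ modulo squares.

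With the reinterpretation in hand, I would enumerate the four possibilities for $(v_{\pp}(x),v_{\pp}(y))\bmod 2$ and compute $(x,y)_{\pp}$ using the formula (\ref{hsymb}). When both valuations are even, $(x,y)_{\pp}=1$, while both disjuncts on the right-hand side fail, since each conjoins a parity condition forcing one of the two valuations to be odd. When $v_{\pp}(x)$ is odd and $v_{\pp}(y)$ is even, (\ref{hsymb}) gives $(x,y)_{\pp}=-1$ iff the leading unit of $y$ is a non-square, while in the first disjunct the alternative ``$-xy$ is locally $s$'' is ruled out by $v_{\pp}(-xy)$ being odd, so the disjunct collapses to ``$y$ is locally $s$,'' which matches. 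The case $v_{\pp}(x)$ even, $v_{\pp}(y)$ odd is symmetric and is handled by the second disjunct. For the remaining case in which both valuations are odd, I would apply the identity $(x,-x)_{\pp}=1$ to rewrite $(x,y)_{\pp}=(x,-xy)_{\pp}$, reducing to the previously handled case since $v_{\pp}(-xy)$ is now even; in the first disjunct the option ``$y$ is locally $s$'' is excluded because $v_{\pp}(y)$ is odd, leaving exactly ``$-xy$ is locally $s$,'' in agreement with $(x,-xy)_{\pp}=-1$. The second disjunct gives the same conclusion by the symmetric argument.

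The main obstacle is purely bookkeeping: one must verify in each parity case that exactly the intended disjunct is the one that can be satisfied, and that this disjunct agrees with the value of $(x,y)_{\pp}$ given by (\ref{hsymb}). No ingredient beyond the Serre formula (\ref{hsymb}), Hensel's lemma, weak approximation, and the identity $(x,-x)_{\pp}=1$ is required.
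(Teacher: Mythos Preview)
Your proposal is correct and follows essentially the same route as the paper: both arguments rest on the Serre formula~(\ref{hsymb}), a parity analysis of $v_{\pp}(x)$ and $v_{\pp}(y)$, and the identity $(x,-x)_{\pp}=1$. Your organization is slightly cleaner in one respect: by first translating the membership conditions $y\in s\cdot K^{\times 2}\cdot(1+\pp\OO_{\pp})$ into the statement ``$y$ lies in the square class of $s$ in $K_{\pp}^{\times}/K_{\pp}^{\times 2}$,'' and then invoking $(x,y)_{\pp}=(x,-xy)_{\pp}$ already in the both-odd case, you bypass the paper's split into subcases according to whether $|\FF_{\pp}|\equiv 1$ or $3\pmod 4$. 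The paper instead computes directly with the tame-symbol formula in that case and reserves $(x,-x)_{\pp}=1$ for the converse direction only; the net content is the same.
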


\begin{proof}

Assume that $(x,y)_{\pp}=-1$. From the formula for the Hilbert symbol in Section \ref{QAfacts}, at least one of $x$ or $y$ must have odd valuation at $\pp$, so without loss of generality, assume $v_{\pp}(x)$ is odd. As $(x,y)_{\pp}=-1$, the formula for the Hilbert symbol implies $((-1)^{v_{\pp}(x)v_{\pp}(y)})^{(|\FF_{\pp}|-1)/2}=-1$ and $\text{red}_{\pp}\left(\frac{x^{v_{\pp}(y)}}{y^{v_{\pp}(x)}}\right)^{\frac{|\FF_{\pp}|-1}{2}}
=1$ or vice versa. Then $v_{\pp}(x/p)$ is even and $x/p\in K^{\times2}\cdot \OO_{\pp}^{\times}$.

\noindent{\bf Case 1.} If $v_{\pp}(y)$ is even, then using weak approximation, choose $t\in K^{\times}$ so that $yt^2$ is a $\pp$-adic unit. In this case, 
$\text{red}_{\pp}\left(\frac{x^{v_{\pp}(y)}}{y^{v_{\pp}(x)}}\right)^{\frac{|\FF_{\pp}|-1}{2}}=-1$ since $(-1)^{v_{\pp}(x)v_{\pp}(y)}=1$.
We claim that $yt^2$ is a non-square modulo $\pp$. This follows from 

\[
\left(\frac{x^{v_{\pp}(y)}y^{-v_{\pp}(x)}yt^2}{\pp}\right)_2=\left(\frac{(x^{(v_{\pp}(y))/2}y^{(1-v_{\pp}(x))/2}t)^2}{\pp}\right)_2=1.
\]
Since $yt^2$ is not a square mod $\pp$, an argument similar to the one in the proof of Lemma \ref{nslem1} shows that $y\in ~s\cdot K^{\times2}\cdot(1+\pp\OO_{\pp})$. 

\noindent{\bf Case 2.} Suppose $v_{\pp}(y)$ is odd. We claim that, possibly after multiplying by a square of $K$, $-xy$ is not a square modulo $\pp$. This would imply that
\[
-xy\in s\cdot(K^{\times})^2(1+\pp\OO_{\pp}),
\] 
which is what we want to show. To prove the claim, use weak approximation to find $t\in K^{\times}$ such that $v_{\pp}(xyt^2)=0$. The following calculation shows $x^{v_{\pp}(y)}/y^{v_{\pp}(x)}$ and  $xyt^2$ have the same $2$-power residue for the prime $\pp$:
\[
\left(\frac{x^{v_{\pp}(y)}y^{-v_{\pp}(x)}xyt^2}{\pp}\right)_2=\left(\frac{(x^{(v_{\pp}(y)+1)/2}y^{(1-v_{\pp}(x))/2}t)^2}{\pp}\right)_2=1.
\]
 If $|\FF_{\pp}|$ is 1 modulo $4$, then $-1$ is a square in $\FF_\pp$. From the formula for the Hilbert symbol, we must have that $x^{v_{\pp}(y)}/y^{v_{\pp}(x)}$ is not a square in the residue field of $\pp$, and hence neither is $-xyt^2$. If $|\FF_{\pp}|$ is $3 \mod 4$, then considering the formula for $(x,y)_{\pp}$ again, we have $(-1)^{(|\FF_{\pp}|-1)/2}=-1$ and thus $(x,y)_{\pp}=-1$ implies $xyt^2$ is a square modulo $\pp$. But since $-1$ is not a square modulo $\pp$, this implies $xyt^2$ is not a square modulo $\pp$, and hence $-xy\in a\cdot K^{\times2}\cdot(1+\pp\OO_{\pp})$.

Conversely, if $x\in p\cdot K^{\times2}\cdot\OO_{\pp}^{\times}$, then $v_{\pp}(x)$ is odd. If $y\in s\cdot K^{\times2}  \cdot(1+\pp\OO_{\pp})$, then for some $t\in K^{\times}$, $yt^2$ is a $\pp$-adic unit and is a non-square modulo $\pp$. If $-xy\in s\cdot K^{\times2}  \cdot(1+\pp\OO_{\pp})$, then, possibly after multiplying by a square of $K^{\times}$, $-xy$ is a $\pp$-adic unit and is a non-square modulo $\pp$. Hence $(x,-xy)_{\pp}=-1$. Since $(x,-x)_{\pp}=1$,
\[
(x,y)_{\pp}=(x,-xy)_{\pp}=-1.
\]
\end{proof}

\begin{thm}\label{nonlocnorm}
Let $\pp$ be a finite or real infinite prime of $K$. The set $\{(x,y)\in K^{\times}\times K^{\times}: (x,y)_{\pp}=-1\}$ is diophantine over $K$.
\end{thm}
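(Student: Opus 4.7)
I will handle $\pp$ case by case, reducing each to a boolean combination of diophantine conditions. First, if $\pp$ is a real archimedean place coming from an embedding $\omega : K \hookrightarrow \RR$, then $(x,y)_{\pp} = -1$ iff $\omega(x) < 0$ and $\omega(y) < 0$, since Hamilton's quaternions are the only nonsplit quaternion algebra over $\RR$. By Lemma~\ref{positivedioph}, the set $\{x \in K^{\times} : \omega(x) < 0\} = -K_{\omega}^{\times}$ is diophantine over $K$, so the conjunction is diophantine.

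Next, if $\pp$ is a finite prime with $|\FF_{\pp}|$ odd, I apply Lemma~\ref{hsodd}. Fix $p, s \in K^{\times}$ with $v_{\pp}(p) = 1$ and $\red_{\pp}(s)$ not a square in $\FF_\pp$; then $(x,y)_{\pp} = -1$ becomes a boolean combination of memberships in $p \cdot K^{\times 2} \cdot \OO_{\pp}^{\times}$ (for $x$ or $y$) and in $s \cdot K^{\times 2} \cdot (1 + \pp\OO_{\pp})$ (for $x$, $y$, or $-xy$). The local ring $\OO_{\pp}$ is diophantine over $K$ by \cite[Lemma 3.22]{Shl94} (invoked earlier in the paper), so $\OO_{\pp}^{\times} = \{u \in \OO_{\pp} : \exists v \in \OO_{\pp},\, uv = 1\}$, $\pp\OO_{\pp} = \pi \OO_{\pp}$ for a uniformizer $\pi \in K$, and $1 + \pp\OO_{\pp}$ are all diophantine. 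Membership in each of the two building blocks is then diophantine by introducing existential witnesses for the square factor and the unit (respectively principal-unit) factor, and finite unions and intersections preserve diophantineness. This covers every finite prime when $K$ is a global function field, since $\ch(K) \neq 2$ forces $|\FF_{\pp}|$ to be odd.

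The remaining case -- a finite prime $\pp$ of residue characteristic $2$ in a number field -- is the main obstacle, because Lemma~\ref{hsodd} does not apply. My approach is to exploit that $(\cdot,\cdot)_{\pp}$ factors through the finite group $K_{\pp}^{\times}/K_{\pp}^{\times 2}$. Using weak approximation (which makes $K^{\times}$ dense in $K_{\pp}^{\times}$), choose representatives $a_1, \ldots, a_m \in K^{\times}$ of its cosets; then
\[
(x,y)_{\pp} = -1 \iff \bigvee_{(i,j)\,:\,(a_i,a_j)_{\pp} = -1} \bigl(x \in a_i K_{\pp}^{\times 2} \ \text{and}\ y \in a_j K_{\pp}^{\times 2}\bigr),
\]
so it suffices to show that $K^{\times} \cap a K_{\pp}^{\times 2}$ is diophantine over $K$ for each fixed $a \in K^{\times}$. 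Since $K_{\pp}^{\times 2}$ is open in $K_{\pp}^{\times}$, there is an integer $N$ with $1 + \pi^{N} \OO_{\pp} \subseteq K_{\pp}^{\times 2}$. By weak approximation, for $z \in K^{\times}$ one has $z \in K_{\pp}^{\times 2}$ iff there exists $t \in K^{\times}$ with $z/t^{2} \in 1 + \pi^{N} \OO_{\pp}$, a diophantine condition on $z$. The main difficulty lies precisely here: reducing the dyadic case to a finite disjunction of coset conditions, and then describing each coset by a higher-power $\pp$-adic approximation with an existential witness for the local square.
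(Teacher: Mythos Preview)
Your proposal is correct and follows essentially the same route as the paper. In all three cases---real places via Lemma~\ref{positivedioph}, finite places of odd residue via Lemma~\ref{hsodd}, and dyadic places via the finiteness of $K_{\pp}^{\times}/K_{\pp}^{\times2}$ together with an open neighborhood $1+\pi^{N}\OO_{\pp}\subseteq K_{\pp}^{\times2}$---your argument matches the paper's; the paper simply makes the dyadic step explicit by taking $N=2e+1$ (from Hensel's lemma) and writing the coset sets as $S_j = s_j\cdot K^{\times2}\cdot(1+\pp^{2e+1}\OO_{\pp})$, which is exactly your description of $K^{\times}\cap a_j K_{\pp}^{\times2}$.
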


\begin{proof}
First assume $\pp$ corresponds to a real archimedean absolute value on
$K$. Let $\omega:K\hookrightarrow \RR$ be the corresponding embedding
of $K$ into $K_{\pp}=\RR$. Then $(x,y)_{\pp}=-1$ if and only if the
equation $xs^2+yt^2=1$ has no solutions in $\RR\times \RR$, which
holds if and only if $\omega(x)<0$ and $\omega(y)<0$. These conditions
are diophantine by Lemma \ref{positivedioph}. 

Now suppose $\pp$ is a finite prime of $K$. If $|\FF_{\pp}|$ is odd, the lemma follows from Lemma \ref{hsodd} since all the sets appearing are diophantine over $K$. Since our statements are only for global fields $K$ with $\ch(K)\not=2$, the only remaining case is that $K$ is a number field and $\pp|2$. 

First we show that there are only finitely many elements in $K_{\pp}^{\times}/K_{\pp}^{\times2}$. Let $\pi$ be a uniformizer for $\hat{\pp}$ and let $e$ be the absolute ramification index, meaning $(\pi)^e=(2)$. Then if $\alpha\in R_{\pp}^{\times}$ is in $1+\hat{\pp}^{2e+1}$, it is a square in $R_{\pp}^{\times}$ by Hensel's Lemma. We conclude that $R_{\pp}^{\times2}$ is open in the profinite group $R_{\pp}^{\times}$ since it contains $1+\hat{\pp}^{2e+1}$, a neighborhood of $1$. As open subgroups of compact groups have finite index, we conclude that $R_{\pp}^{\times2}$ has finite index in $R_{\pp}^{\times}$. To see that $[K_{\pp}^{\times}:K_{\pp}^{\times2}]$ is finite, we now just need to observe that squares of $K_{\pp}^{\times}$ are of the form $s\cdot\pi^{2k}$ where $s\in R_{\pp}^{\times2}$.

Let $s_1,\ldots,s_n\in K$ be a set of representatives for $K_{\pp}^{\times}/K_{\pp}^{\times2}$ and define $S_j:= s_j\cdot K^{\times2}\cdot(1+\pp^{2e+1}\OO_{\pp})$. For $x\in S_i$, $y\in S_j$ we have $(x,y)_{\pp}=(s_i,s_j)_{\pp}$.

Now we define
\[
S_{\pp}:=\bigcup_{i,j: (s_i,s_j)_{\pp}=-1} S_i\times S_j.
\]
Then $(x,y)_{\pp}=-1$ if and only if $(x,y)\in S_{\pp}$. Each set $S_i$ is diophantine over $K$, and the finite Cartesian product of diophantine sets is again diophantine. Thus $S_{\pp}$ is diophantine over $K$.
  
\end{proof}

Now we prove Theorem \ref{nonnorm} for a global field $K$ with $\ch(K)\not=2$.

\begin{proof}[Proof of Theorem \ref{nonnorm}]
  By the Hasse norm principle, $x$
  is not a norm in $K(\sqrt{y})$ if and only if $(x,y)_{\pp}=-1$ for
  some finite or real infinite prime $\pp$ of $K$. For $\sigma\not=(1,1)\in
  \Gal(K(\sqrt{a},\sqrt{b})/K)$, let $s_{\sigma}=a$ if
  $\sigma=(-1,\pm1)$ and $s_{\sigma}=b$ if $\sigma=(1,-1)$. We claim
  that $(x,y)_{\pp}=-1$ if and only if one of the following conditions
  holds:

\begin{itemize}
\item $\exists\, \pp|\mm \text{ such that } (x,y)_{\pp}=-1$,
\item $\bigvee_{\sigma\not=(1,1)} \exists p \in \Phi_{\sigma} \text{ such that}$
\begin{align*}
\big((x\in p \cdot K^{\times2} \cdot (R_{p}^{\sigma})^{\times}) &\land (y \text{ or } -xy \in s_{\sigma} \cdot K^{\times2} \cdot (1+J(R^{\sigma}_p)))\big) \\ 
\lor \big((y\in p \cdot K^{\times2} \cdot (R_{p}^{\sigma})^{\times}) &\land (x \text{ or } -xy \in s_{\sigma} \cdot K^{\times2} \cdot (1+J(R^{\sigma}_p)))\big),
\end{align*}
\item $\exists(p,q)\in \Psi_K \text{ such that } q\in (R^{(1,1)}_{p,q})^{\times} \text{ and }$ 
\begin{align*}
\big((x\in p \cdot K^{\times2} \cdot (R_{p,q}^{(1,1)})^{\times}) &\land (y \text{ or } -xy \in q \cdot K^{\times2} \cdot (1+J(R^{(1,1)}_{p,q})))\big) \\ 
\lor\big((y\in p \cdot K^{\times2} \cdot (R_{p,q}^{(1,1)})^{\times}) &\land (x \text{ or } -xy \in q \cdot K^{\times2} \cdot (1+J(R^{(1,1)}_{p,q})))\big).
\end{align*}

\end{itemize}

This will imply the theorem, because we have already shown that the above conditions define diophantine sets. We will first prove the forward implication. If $x$ is not a norm in
$K(\sqrt{y})$, there is a prime $\pp$ of $K$ such that
$(x,y)_{\pp}=-1$. If $\pp|\mm$ we are
done; recall that $\mm$ contains all real infinite primes if $K$ is a
number field.

Now assume $\pp \in I_{\mm}$ and that $\psi_{L/K}(\pp)=\sigma\not=(1,1)$. We
claim that the second condition holds. We can find $p\in \Phi_{\sigma}$
such that $\PP^{\sigma}(p)=\{\pp\}$ as before. Corollary \ref{cor3.20} and the definition of $R_p^{\sigma}$ imply
$R_p^{\sigma}=\OO_{\pp}$ and $J(R_p^{\sigma})=\pp\OO_{\pp}$. By setting $p:=p$ and $s:=s_{\sigma}$, Lemma
\ref{hsodd} implies that the second condition holds because $v_{\pp}(p)$ is odd
and $s_{\sigma}$, by construction, is a $\pp$-adic unit which is not a square modulo
$\pp$. For example, if $\sigma=(1,-1)$, then $s_{\sigma}=b$. The fractional ideal $(b)$ is coprime to $\mm$, and $\psi_{L/K}(\pp)=(1,-1)$ implies that $b$ is not a square mod $\pp$. 

Now assume $\pp \in I_{\mm}$ with $\psi_{L/K}(\pp)=(1,1)$. We claim that the third condition holds
in this case. We will first show that we can find $(p,q)\in
\Psi_K$ with the stated properties if $K$ is a global function field. By Lemma \ref{1,1}, there is a
$(p,q)\in \Psi_K$ with $\Delta_{ap,q}\cap \Delta_{bp,q}=\{\pp\}$. In fact,
 $q$ can be chosen so that $v_{\pp}(q)=0$, $q$ is not a square modulo $\pp$, and such that $\PP(q)=\{\qq,\pp_0\}$. Here, $\qq$ and $\pp_0$ are primes with 
$\psi_{L/K}(\qq)=(-1,-1)$ and $\psi_{L/K}(\pp_0)=(1,1)$. Then $q\in
\OO_{\pp}^{\times}=(R^{(1,1)}_{p,q})^{\times}$ by Definition \ref{semilocalringsdef}. By the formula for the Hilbert symbol,
$v_{\pp}(ap)$ is odd, and since $\pp$ cannot divide $(a)$, we conclude $v_{\pp}(p)$ is odd. As $R_{p,q}^{(1,1)}=\OO_{\pp}$ and
$J(R_{p,q}^{(1,1)})=\pp\OO_{\pp}$, we can apply Lemma \ref{hsodd} with
$s:=q$. 

If $K$ is a number field, then by Lemma 3.25 in \cite{Park},
we can find $(p,q)\in \Psi_K$ such that $(q)$ is a prime ideal with
$\psi_{L/K}((q))=(-1,-1)$, $q$ is not a square in $K_{\pp}$, and such that
$\Delta_{ap,q}\cap\Delta_{bp,q}=\{\pp\}$. A similar argument as above
shows that the second condition holds.

Now we will prove that if one of the three conditions above holds, then for
some prime $\pp$, $(x,y)_{\pp}=-1$. Suppose the second condition
holds for some $\sigma\not=(1,1)$. If $K$ is a global function field, $\PP^{\sigma}(p)\not=\emptyset$ by Lemma \ref{not1,1} (2), so $\PP^{\sigma}(p)$ contains some
prime $\pp$. Assume, without loss of generality, that
\[
\big((x\in p \cdot K^{\times2} \cdot (R_{p}^{\sigma})^{\times}) \land (y \text{ or } -xy \in s_{\sigma} \cdot K^{\times2} \cdot (1+J(R^{\sigma}_p)))\big).
\]
Since $v_{\pp}(p)$ is odd, $v_{\pp}(x)$ is odd,
too. Also, either $y$ or $-xy$ is, possibly after multiplying by a
square of $K^{\times}$, a non-square in $K_{\pp}$ since
$\psi_{L/K}(\pp)=\sigma$ implies $s_{\sigma}$ is a non-square in
$K_{\pp}$. By Lemma \ref{hsodd}, this implies that
$(x,y)_{\pp}=-1$. If $K$ is a number field, an application of Lemma 3.22 (b) in \cite{Park} and a similar argument show that $(x,y)_{\pp}=-1$ for a finite prime $\pp$ of $K$. 

We now prove that the third condition implies that $(x,y)_{\pp}=-1$ for some
$\pp$ with $\psi_{L/K}(\pp)=(1,1)$. The argument is similar to the one in the second condition. If $K$ is a global function field, $(p,q)\in \Psi_K$ implies that 
$\Delta_{ap,q}\cap\Delta_{bp,q}\not=\emptyset$ and contains some prime $\pp$ by Lemma \ref{1,1} part (2). Then because $q\in (R_{p,q})^{\times}$ and
$(ap,q)_{\pp}=(bp,q)_{\pp}=-1$, $q$ is not a square mod $\pp$ and $v_{\pp}(p)$ must be odd. Again by Lemma
\ref{hsodd}, this implies that $(x,y)_{\pp}=-1$. If $K$ is a number field, the same argument, along with Lemma 3.25 (b) from \cite{Park}, proves the claim.

\end{proof}

\section*{Acknowledgments}
The first author was partially supported by National
  Science Foundation grant DMS-1056703.
The second author was partially
  supported by  National
  Science Foundation grants DMS-1056703 and CNS-1617802.


\end{document}